\theoremstyle{plain}
\newtheorem{theorem}{Theorem}
\newtheorem{lemma}[theorem]{Lemma}
\newtheorem{fact}[theorem]{Fact}
\theoremstyle{definition}
\newtheorem{example}[theorem]{Example}
\theoremstyle{remark}
\newtheorem{remark}[theorem]{Remark}
\def\LL{\mathcal L}
\def\UU{\mathcal U}
\def\Hdim{\dim_{\mathrm H}}
\title{Dirichlet uniformly well-approximated numbers}
\author{Dong Han Kim}
\address{Department of Mathematics Education, Dongguk University - Seoul, 30 Pil-dongro 1-gil, Jung-gu, Seoul 04620, Korea}
\email{kim2010@dongguk.edu}
\author{Lingmin Liao}
\address{LAMA UMR 8050, CNRS Universit\'e Paris-Est Cr\'eteil, 61 Avenue du G\'en\'eral de Gaulle, 94010 Cr\'eteil, Cedex, France}
\email{lingmin.liao@u-pec.fr}
\date{\today}
\subjclass[2010]{Primary 11J83; Secondary 11K50; 37M25}
\keywords{Dirichlet theorem, inhomogeneous Diophantine approximation, irrational rotation, Hausdorff dimension}
\begin{document}

\begin{abstract}
Fix an irrational number $\theta$. 
For a real number $\tau >0$, consider the numbers $y$ satisfying that for all large number $Q$, 
there exists an integer $1\leq n\leq Q$, such that $\|n\theta-y\|<Q^{-\tau}$, 
where $\|\cdot\|$ is the distance of a real number to its nearest integer. 
These numbers are called Dirichlet uniformly well-approximated numbers. 
For any $\tau>0$, the Haussdorff dimension of the set of these numbers is obtained and is shown to depend on the Diophantine property of $\theta$. 
It is also proved that with respect to $\tau$, the only possible discontinuous point of the Hausdorff dimension is $\tau=1$.
\end{abstract}

\maketitle

\section{Introduction}
In Diophantine approximation, we study the approximation of an irrational number by rationals. 
Denote by $\| t \| = \min_{n\in \mathbb Z} |t-n|$ the distance from a real $t$ to the nearest integer. 
In 1842, Dirichlet \cite{Di} showed his celebrated theorem in Diophantine approximation:

\medskip 
\noindent{\bf Dirichlet theorem}
 {\it Let $\theta$, $Q$ be real numbers with $Q\geq 1$. There exists an
  integer $n$ with $1\leq n \leq Q $, such that $ \|n \theta \|< Q^{-1}$.}
\medskip 

Following Waldschmidt \cite{Wald}, we call 
Dirichlet theorem a {\it uniform approximation theorem}. 
A weak form of the theorem, called an {\it asymptotic approximation theorem},  was already known  (e.g., Legendre's 1808 book \cite[pp.18--19]{Leg})\footnote{The authors thank Yann Bugeaud for telling us this history remark.} before Dirichlet: for any real $\theta$, there exist infinitely many integers $n$ such that $ \|n \theta \|< {n}^{-1}$.
In the literature, much more attention has been paid to the asymptotic approximation.

The first inhomogeneous asymptotic approximation result is due to Minkowski \cite{Min} in 1907. 
Let $\theta$ be an irrational number. Let $y$ be a real number which is not
equal to any $m \theta +\ell $ with $m,\ell\in \mathbb{N} $.  Minkowski proved that 
there exist infinitely many integers $n$ such that $\|n \theta -y\|< \frac{1}{4|n|}$.

In 1924, Khinchine \cite{Kh24} proved that for a continuous function $\Psi: \mathbb{N} \rightarrow \mathbb{R}^+$, if $x\mapsto x\Psi(x)$ is non-increasing, then the set 
$$ \mathcal{L}_{\Psi} :=\left\{ \theta \in \mathbb{R}: \ \|n \theta \|< \Psi(n) \ \text{for infinitely many} \ n\right\}$$
has Lebesgue measure zero if the series $\sum \Psi(n)$ converges and has full Lebesgue measure otherwise. 
The expected similar result by deleting the non-increasing condition on $\Psi$ is the famous Duffin-Schaeffer conjecture \cite{DS41}. 
One could find the recent progresses towards this conjecture in Haynes--Pollington--Velani--Sanju \cite{HPV12} and Beresnevich--Harman--Haynes--Velani \cite{BHHV12}.
  
For the inhomogeneous case, Khinchine's theorem was extended to the set
$$\mathcal{L}_{\Psi}(y) :=\left\{ \theta  \in \mathbb{R}: \ \|n \theta -y\|< \Psi(n) \ \text{for infinitely many} \ n\right\}$$
by Sz\"usz \cite{Szusz} and Schmidt \cite{Schmidt}.
On the other hand, it follows from the Borel-Cantelli Lemma that the Lebesgue measure of 
$$\mathcal{L}_{\Psi}[\theta] :=\left\{ y  \in \mathbb{R}: \ \|n \theta -y\|< \Psi(n) \ \text{for infinitely many} \ n\right\}$$
  is zero whenever the series $\sum \Psi(n)$ converges. However, it seems not easy to obtain a full Lebesgue measure result. 
In 1955, Kurzweil \cite{Kurzweil55} showed that, if the irrational $\theta$ is of bounded type (the partial quotients of the continued fraction of $\theta$ is bounded), then for a monotone decreasing function $\Psi: \mathbb{N} \rightarrow \mathbb{R}^+$, with $\sum \Psi(n)=\infty$, the set $\mathcal{L}_{\Psi}[\theta]$ has full Lebesgue measure. 
In 1957, Cassels \cite{Cassels57} proved that for almost all $\theta$, the set $\mathcal{L}_{\Psi}[\theta]$ has full Lebesgue measure if $\sum \Psi(n)=\infty$. 
For new results in this direction, we refer to the recent works Laurent--Nogueira \cite{LN12}, Kim \cite{Kim14}, and Fuchs--Kim \cite{FK}. 

At the end of twenties of last century, the concept of Hausdorff dimension had been introduced into the study of Diophantine approximation. 
We refer the reader to \cite{FALC} for the definition and properties of the Hausdorff dimension.
Using the notion of Hausdorff dimension, Jarn\'{i}k (\cite{Ja}, 1929) and
independently Besicovitch (\cite{Be}, 1934) studied the size of the set of asymptoticly well-approximated numbers.
They proved that for any $\tau \ge 1$, the Hausdorff dimension of the set
\begin{equation*}
\mathcal{L}_{\tau} (0) :=\left\{ \theta  \in \mathbb{R}: \ \|n \theta \|< n^{- \tau} \ \text{for infinitely many} \ n\right\}
\end{equation*}
is $2/(\tau +1) $.

The corresponding inhomogeneous question was solved by Levesley \cite{Le} in 1998:
for any $\tau \ge 1$, and any real number $y$, the Hausdorff
dimension of the set
\begin{equation*}
\LL_{\tau} (y):=\left\{ \theta \in \mathbb{R} : \ \|n \theta -y \|< n^{- \tau} \ \text{for infinitely many} \ n\right\}
\end{equation*}
which is different from $\mathcal{L}_{\tau} (0)$, is also $2/(\tau+1) $.

As in the Lebesgue measure problems, for the inhomogeneous case, one is also concerned with the Hausdorff dimension of the sets of inhomogeneous terms. For a fixed irrational $\theta$, let us denote
\begin{equation*}
 \LL_{\tau} [\theta] :=\left\{ y  \in \mathbb{R}: \ \|n \theta - y\|< n^{-\tau} \ \text{for infinitely many} \ n\right\}.
\end{equation*}
In 2003, Bugeaud \cite{Bu} and independently, Schmeling and Troubetzkoy \cite{SchTro} showed that
for any $\tau \ge 1$ the Hausdorff dimension of the set $ \LL_{\tau} [\theta]$ is $1/\tau$.

In analogy to the asymptotic approximation, for $\tau >0$, we consider the following uniform approximation sets:
\begin{align*}
\UU_{\tau}(y)
    &:= \left\{ \theta  \in \mathbb{R}: \ \text{for all large } Q, \ 1 \le \exists n \le Q \text{ such that } \| n \theta - y \|< Q^{ -\tau } \right\},\\
\UU_{\tau}[\theta] &:= \left\{ y  \in \mathbb{R}: \ \text{for all large } Q, \ 1 \le \exists n \le Q \text{ such that } \| n \theta - y \|< Q^{ -\tau } \right\}.
 \end{align*}
The points in $\UU_{\tau}(y)$ and $\UU_{\tau}[\theta]$ are called Dirichlet uniformly well-approximated numbers. The set $\UU_{\tau}(0)$ is referred to as homogeneous uniform approximation.  In general, except for a countable set, $\UU_{\tau}(y)$ and $\UU_\tau[\theta]$ are 
contained in $\LL_{\tau} (y)$ and $\LL_{\tau} [\theta]$ correspondingly, since the uniform approximation property is stronger than the asymptotic approximation property.

We see from Dirichlet Theorem that $\UU_{ 1}(0)=\mathbb{R}$. 
However, Khintchine (\cite{Kh26}, 1926) showed that for all $\tau > 1$, $\UU_{\tau}(0)$ is $\mathbb{Q}$. 
Consult \cite{KW} for the uniform approximation by general error functions. 
In general, for $y\in \mathbb{R}$, the set $\UU_{ 1}({y})$ does not always contain all irrationals.
Thus, there is no inhomogeneous analogy of the Dirichlet Theorem. 
The question on the size of $\UU_\tau(y)$ for general $y\in \mathbb{R}$ is largely open.

For higher dimensional analogy of $\UU_{\tau}(0)$, Cheung \cite{Che} proved that the set of points $(\theta_1, \theta_2)$ such that for any $\delta>0$, for all large $Q$, there exists $n \le Q $ such that 
\[ \max\big\{\|n \theta_1\|, \|n\theta_2\| \big\}< {\delta}/{Q^{\frac{1}{2}}},\]
is of Hausdorff dimension $4/3$. This result was recently generalized to dimension larger than $2$ by Cheung and Chevallier \cite{ChCh}. 


In this paper, we mainly study the set $\UU_{\tau}[\theta]$. 
We will restrict ourselves to the unit circle $\mathbb T = \mathbb R / \mathbb Z$, for which the dimension results will be the same to those on $\mathbb{R}$. 
The points in $\mathbb T$ are considered as the same if their fractional parts are the same.
The irrationality exponent of $\theta$ is given by  
$$ w(\theta) := \sup \{ s > 0 :  \| n \theta \| < n^{-s} \text{ for infinitely many } n  \}.$$
We remark that the usual irrationality exponent is defined as $1+w(\theta)$. See for example \cite{BuBook, Wald}.
It was shown in Propositions 9 and 10 of \cite{KimSeo} that $\UU_{\tau}[\theta]$ is of Lebesgue measure 1 if $\tau < 1/w(\theta)$ or 0 if $\tau > 1/w(\theta)$  
(see also \cite{Kim} for a related result). 
Denote by $\Hdim$ the Hausdorff dimension. 
Let $q_n=q_n(\theta)$ be the denominator of the $n$-th convergent of the continued fraction of $\theta$. 
The following main theorems show that $\Hdim \left( \mathcal U_{ \tau} [\theta] \right)$ can be obtained using the sequence $q_n(\theta)$ and strongly depends on the irrationality exponent of $\theta$:

\begin{theorem}\label{main_theorem}
Let $\theta$ be an irrational with $w(\theta) > 1$.  
Then $\mathcal U_{\tau} [\theta] = \mathbb{T}$ if $\tau < 1/w(\theta)$; 
$\mathcal U_{\tau} [\theta] = \{ i \theta \in \mathbb T : i\geq 1,  i \in \mathbb Z \}$ if $\tau > w(\theta)$; and
$$
\Hdim \left( \mathcal U_{\tau} [\theta] \right) = 
\begin{dcases}
\displaystyle \varliminf_{k \to \infty} \frac{ \log \left( n_k^{ 1+1/\tau} \prod_{j=1}^{k-1} n_j^{1/\tau} \| n_j\theta \| \right ) }{\log ( n_k \| n_k \theta \|^{-1} ) }, 
& \frac{1}{w(\theta)} < \tau < 1, \\
\displaystyle \varliminf_{k \to \infty} \frac{ -\log \left( \prod_{j=1}^{k-1}n_j \| n_j\theta \|^{1/\tau} \right) }{\log \left( n_k \| n_k \theta \|^{-1} \right) },
 & 1 < \tau < w(\theta).
\end{dcases} $$
where $n_k$ is the (maximal) subsequence of $(q_k)$ such that
$$\begin{cases}
n_k \| n_k \theta \|^\tau < 1, &\text{ if }  1/w(\theta) <  \tau < 1, \\
n_k^\tau \| n_k \theta \| < 2, &\text{ if } 1 < \tau < w(\theta).
\end{cases}$$
\end{theorem}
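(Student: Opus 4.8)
For $Q\ge 1$ put $E_Q:=\bigcup_{n=1}^{Q}\{y\in\mathbb T:\|n\theta-y\|<Q^{-\tau}\}$ and $\psi_y(Q):=\min_{1\le n\le Q}\|n\theta-y\|$, so that $\mathcal U_\tau[\theta]=\bigcup_{Q_0\ge 1}\bigcap_{Q\ge Q_0}E_Q=\{y\in\mathbb T:\psi_y(Q)<Q^{-\tau}\text{ for all large }Q\}$. Since the sets $\bigcap_{Q\ge Q_0}E_Q$ increase with $Q_0$ while a $\varliminf$ over levels is insensitive to discarding finitely many of them, it suffices to compute $\dim_H\bigcap_{Q\ge Q_0}E_Q$ for one large $Q_0$, and to treat the equalities $\mathcal U_\tau[\theta]=\mathbb T$ and $\mathcal U_\tau[\theta]=\{i\theta:i\ge 1\}$ by a separate direct argument. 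The whole analysis rests on two dual descriptions of the orbit $\{n\theta\}$: the three-distance theorem (for $q_k\le Q<q_{k+1}$ the points $\{n\theta\bmod 1:1\le n\le Q\}$ cut $\mathbb T$ into arcs of at most three lengths, expressed through $q_{k-1},q_k,\|q_{k-1}\theta\|,\|q_k\theta\|$ and refining in a controlled way as $Q$ increases by $1$), and the Ostrowski expansion of $y$ with respect to $\theta$, according to which $\psi_y(Q)$ is a non-increasing step function that drops only at the best-approximation denominators of $y$, is constant between two of them, and---when a partial quotient $a_{k+1}$ is large---descends in a ``fan'' of $\asymp a_{k+1}$ consecutive steps of size $\asymp q_k$, decreasing by $\asymp\|q_k\theta\|$ at each step. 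Consequently $Q\mapsto\psi_y(Q)Q^{\tau}$ is a sawtooth whose peaks sit just below the best-approximation denominators, and $y\in\mathcal U_\tau[\theta]$ exactly when all but finitely many peaks are $\le 1$.

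\noindent\textbf{The degenerate regimes.} Put $w_k:=\log q_{k+1}/\log q_k$, so $w(\theta)=\limsup_k w_k$. Optimizing $\psi_y(Q)Q^{\tau}$ over the fan at level $k$ gives, uniformly in $y$, $\max_{q_k\le Q<q_{k+1}}\psi_y(Q)Q^{\tau}\lesssim q_{k+1}^{\tau}/q_k$, with a matching lower bound for generic $y$. Hence if $\tau<1/w(\theta)$ this is eventually $<1$ for every $y$, so $\mathcal U_\tau[\theta]=\mathbb T$. If $\tau>w(\theta)$, then for $y\notin\{i\theta:i\ge 1\}$ the Ostrowski expansion of $y$ has infinitely many nonzero digits, and at each nonzero digit (say at level $k$) the fully traversed fan forces a peak of order $\gg 1$; hence $y\notin\mathcal U_\tau[\theta]$, while every $i\theta$ lies in $\mathcal U_\tau[\theta]$ trivially, so $\mathcal U_\tau[\theta]=\{i\theta:i\ge 1\}$.

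\noindent\textbf{The two dimension formulas.} For $1/w(\theta)<\tau<1$ and for $1<\tau<w(\theta)$ the plan is to turn ``all peaks $\le 1$'' into an explicit family of Ostrowski-digit constraints and then read off a Moran/Cantor set. A closer look at the per-fan optimization shows that the binding constraint at level $k$ is vacuous unless $q_k$ lies in the subsequence $(n_k)$ distinguished in the statement (the levels whose fan is long enough to matter); the optimization has a $(1+\tau)$-type shape, but the quantity it controls---and hence the resulting digit constraint---differs in the two ranges, which is the source of the case split. In both cases one obtains that $\bigcap_{Q\ge Q_0}E_Q$, up to its countable boundary and finitely many initial levels, is a Moran set: at level $k$ a disjoint union of $\asymp m_k$ arcs of common length $\asymp\ell_k$, nested inside the level-$(k-1)$ family, with $-\log\ell_k\asymp\log\bigl(n_k\|n_k\theta\|^{-1}\bigr)$ and with $m_1\cdots m_k$, up to bounded factors, equal to the expression inside the corresponding $\varliminf$. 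The lower bound for $\dim_H$ then follows from the mass distribution principle applied to the measure distributing mass uniformly among the $m_k$ children at each level; the upper bound from covering by the $m_1\cdots m_k$ level-$k$ arcs and optimizing over $k$. Both yield $\varliminf_k\log(m_1\cdots m_k)/(-\log\ell_k)$, and substituting the explicit $m_k$ and $\ell_k$, telescoping the products and using $\|n_k\theta\|\asymp q_{l_k+1}^{-1}$ when $n_k=q_{l_k}$, gives the two displayed formulas.

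\noindent\textbf{The main obstacle.} The crux is the per-fan optimization together with the geometric bookkeeping: one must show that imposing $\psi_y(Q)Q^{\tau}\le 1$ for \emph{every} $Q$ inside a fan---not just at its endpoints---carves out a clean, self-similarly nested family of arcs with the asserted count and length rather than an irregular set, and that within each fan the peak of $\psi_y(Q)Q^{\tau}$ is maximized at the predicted location, uniformly over all fans. This needs the fully quantitative three-distance theorem (the exact lengths and multiplicities of the arcs of $\{n\theta\}_{n\le Q}$ for each $Q$, and how an arc splits when $Q$ is incremented) together with a careful analysis of $\psi_y$ along a fan via the Ostrowski digits of $y$, including the interference between the level-$k$ digit and the tail of the expansion. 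A secondary, more routine task is to verify that the branching numbers $m_k$ do not grow so fast as to violate the regularity hypotheses of the Moran dimension formula, and to splice together the countably many transitional scales between consecutive fans.
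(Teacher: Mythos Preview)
Your strategy is sound and parallels the paper's proof, but the packaging differs in an instructive way. You organize everything around the Ostrowski expansion of $y$ and the sawtooth $Q\mapsto\psi_y(Q)Q^\tau$, reducing membership in $\mathcal U_\tau[\theta]$ to digit constraints and then reading off a Moran set. The paper instead works geometrically: it groups the constraints into blocks $F_k=\bigcap_{q_k<n\le q_{k+1}}G_n$ and proves explicit two-sided inclusions of the form $F_k\approx\bigcup_{i=1}^{q_k}(i\theta-\ast,\,i\theta+\ast)$ with the endpoint $\ast\asymp(\|q_k\theta\|/q_k)^{\tau/(\tau+1)}$ when $\tau<1$ (Lemma~8) and $F_k\approx\bigcup_{i=1}^{c_kq_k}B(i\theta,q_{k+1}^{-\tau})$ with $c_k=\lfloor(q_k^\tau\|q_k\theta\|)^{-1/(\tau+1)}\rfloor$ when $\tau>1$ (Lemma~9). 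This is exactly your ``per-fan $(1+\tau)$-type optimization,'' carried out by one-line calculus (minimize $x\mapsto x\|q_k\theta\|+(xq_k)^{-\tau}$). For the child-count, where you invoke the Ostrowski digit constraints and mass distribution, the paper instead applies the Denjoy--Koksma inequality to the indicator of a parent interval to get $m_i\gtrsim q_{k_i}\cdot(\text{parent length})$, then feeds $(m_i,\varepsilon_i,\ell_i,\delta_i)$ into the standard Falconer facts (Facts~5 and~6). The super-exponential growth of the subsequence $n_k$ (since $w(\theta)>1$) is what lets both approaches discard the multiplicative constants and the $O(i)$ additive terms in the logarithms.

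Two places where the paper's route is tighter than your sketch: first, for $\tau>w(\theta)$ the paper avoids any Ostrowski-tail analysis by showing directly that once $q_k^\tau\|q_k\theta\|\ge 2$ one has $F_k=\bigcup_{i=1}^{q_k}B(i\theta,q_{k+1}^{-\tau})$ exactly (Lemma~9(ii)), so $\bigcap F_k=\{i\theta:i\ge 1\}$ with no further work; your ``each nonzero digit forces a peak $\gg1$'' is true but needs the tail of the Ostrowski expansion controlled to make it rigorous. Second, your Moran regularity worry (that the nesting is clean and the children are well separated) is handled in the paper by the explicit interval formulas rather than by abstract Ostrowski bookkeeping; the separation $\varepsilon_i\gtrsim\|q_{k_i-1}\theta\|$ (resp.\ $\|q_{k_i}\theta\|$) falls out immediately from the three-distance picture once $F_k$ is pinned between two unions of arcs. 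In short: what you flag as the ``main obstacle'' is precisely the content of Lemmas~8--9, and the paper resolves it by elementary optimization on the covering side and Denjoy--Koksma on the counting side, rather than by a full Ostrowski-digit encoding of $y$.
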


By a careful calculation for the case of $\tau = 1$ combined with Theorem~\ref{main_theorem}, we have the following bounds of dimension in terms of $w(\theta)$.

\begin{theorem}\label{bounds_beta_1}
For any irrational $\theta$ with $w(\theta)=  1$, we have $\mathcal U_{\tau} [\theta] = \mathbb{T}$ if $\tau < 1$; $\mathcal U_{\tau} [\theta] = \{ i\theta : i\geq 1, { i \in \mathbb Z}\}$ if $\tau >1$; and
\begin{align*}
\frac{1}{2} \le \Hdim \left( \mathcal U_{\tau} [\theta] \right) \le 1,  \quad \text{if } \ \tau = 1.
\end{align*}
\end{theorem}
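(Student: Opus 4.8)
The plan is to handle the two equalities $\mathcal U_\tau[\theta]=\mathbb T$ (for $\tau<1$) and $\mathcal U_\tau[\theta]=\{i\theta:i\ge1\}$ (for $\tau>1$) first, since these should follow essentially from the same arguments used in the proof of Theorem \ref{main_theorem}, only now specialised to $w(\theta)=1$. For $\tau<1$: fix $y\in\mathbb T$ and $Q\gg1$; using the three-distance theorem, the points $\{n\theta\}$ for $1\le n\le Q$ partition $\mathbb T$ into arcs of length at most roughly $\|q_{k}\theta\|+\|q_{k-1}\theta\|$ where $q_{k-1}\le Q<q_k$, and since $w(\theta)=1$ gives $\|q_{k-1}\theta\|\le q_{k-1}^{-1+o(1)}$, the largest gap is at most $Q^{-1+o(1)}<Q^{-\tau}$ for $Q$ large; hence every $y$ lies within $Q^{-\tau}$ of some $n\theta$, so $\mathcal U_\tau[\theta]=\mathbb T$. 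For $\tau>1$: if $y\ne i\theta$ for all $i\ge1$, then for the $Q=q_k-1$ (or a suitable choice just below a denominator) the relevant covering arcs have length comparable to $\|q_{k-1}\theta\|\asymp q_k^{-1}$, which exceeds $Q^{-\tau}$ infinitely often since $\tau>1$; one must also check $y$ is not caught by the isolated point $n\theta$ with $n\le Q$ lying within $Q^{-\tau}$, but as $y$ is fixed and irrational-type this fails for large $k$ — so $y\notin\mathcal U_\tau[\theta]$. Conversely each $i\theta$ trivially belongs to every $\mathcal U_\tau[\theta]$ by taking $n=i$.

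For the dimension bounds at $\tau=1$, the upper bound $\dim_H(\mathcal U_1[\theta])\le1$ is trivial since $\mathcal U_1[\theta]\subseteq\mathbb T$. The substantive claim is the lower bound $\dim_H(\mathcal U_1[\theta])\ge1/2$. My approach is to construct an explicit Cantor-type subset of $\mathcal U_1[\theta]$ and apply the mass distribution principle. The natural building blocks are the "good scales" $Q$ at which the three-distance covering is tight: between consecutive denominators $q_{k-1}\le Q<q_k$, the set $\{n\theta:1\le n\le Q\}$ is $\|q_{k-1}\theta\|$-dense but no denser, and $\|q_{k-1}\theta\|\asymp 1/q_k$. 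Membership of $y$ in $\mathcal U_1[\theta]$ at scale $Q$ thus amounts to $y$ lying in the union of intervals of radius $Q^{-1}$ around the $\le Q$ points $n\theta$; since the points are roughly $q_k^{-1}$-separated while the radius is $Q^{-1}\ge q_k^{-1}$, for $Q$ near $q_k$ these intervals already cover essentially all of $\mathbb T$, but for $Q$ slightly larger than $q_{k-1}$ the radius $Q^{-1}\approx q_{k-1}^{-1}$ is much bigger than the spacing $q_k^{-1}$, giving full coverage, whereas the constraint bites exactly in the regime where $Q$ is comparable to $q_k$ itself. The idea is: at each level $k$ one keeps those $y$ that are within $q_k^{-1}$ of \emph{every} partial orbit point, and the surviving set at level $k$ is a union of arcs whose total measure and whose local structure (each arc of length $\asymp q_k^{-1}$, spaced $\asymp q_{k-1}^{-1}$ apart, yielding $\asymp q_k/q_{k-1}$ of them inside each parent arc) can be estimated. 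Carrying out the standard Cantor-measure computation, the resulting dimension is $\varliminf_k \log(q_k/q_{k-1})/\log(q_k^2/q_{k-1}) \ge \varliminf_k \log(q_k/q_{k-1})/(2\log(q_k/q_{k-1})) = 1/2$, where the inequality uses $q_{k-1}\le q_k$ so that $q_k^2/q_{k-1}\le q_k^2$, and the fact that $w(\theta)=1$ forces $\log q_{k+1}/\log q_k\to1$ (equivalently $\log\|q_k\theta\|^{-1}\sim\log q_k$), which is precisely what keeps the exponent from dropping below $1/2$.

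The main obstacle I anticipate is making the Cantor construction genuinely land inside $\mathcal U_1[\theta]$ rather than merely inside a liminf-type set: one needs the surviving arcs at level $k$ to guarantee the Dirichlet-type condition for \emph{all} $Q$ in the block $[q_{k-1},q_k)$ simultaneously, not just at the endpoints, and then to glue the blocks so that no $Q$ is missed. This requires a careful bookkeeping of how the covering intervals of radius $Q^{-1}$ vary as $Q$ ranges over a block — monotonicity (larger $Q$ means smaller radius but more points) must be exploited to reduce the continuum of constraints to finitely many critical $Q$'s per block, essentially $Q=q_k-1$. A secondary technical point is verifying the separation and counting estimates for the arcs via the three-distance theorem uniformly in $k$; this is routine but must be done with explicit constants so the mass distribution principle applies. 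Once the construction is in place, the lower bound $1/2$ follows, and combined with $w(\theta)=1\Rightarrow \log q_{k+1}\sim\log q_k$ it is even plausible that the liminf equals some cleaner expression, but for the stated theorem only the bound $1/2\le\dim_H(\mathcal U_1[\theta])\le1$ is required.
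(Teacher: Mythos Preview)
Your treatment of the cases $\tau<1$ and $\tau>1$ is essentially the paper's own argument (Lemma~\ref{fkt} for $\tau<1$, Lemma~\ref{fkb}\,(ii) for $\tau>1$), and is fine in outline.

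The difficulty is entirely at $\tau=1$, and here your sketch has a genuine gap. First, the numerics in your Cantor calculation are inconsistent: arcs of length $\asymp q_k^{-1}$ ``spaced $\asymp q_{k-1}^{-1}$ apart'' cannot simultaneously give $q_k/q_{k-1}$ children per parent, and the displayed formula $\varliminf_k \log(q_k/q_{k-1})/\log(q_k^2/q_{k-1})$ is not $\ge 1/2$ --- for the golden mean it tends to $0$, since the numerator stays bounded while the denominator is $\sim k\log\phi$. Your inequality ``$q_k^2/q_{k-1}\le q_k^2$ hence the ratio is $\ge\tfrac12$'' does not follow: $2\log q_k$ is not $2\log(q_k/q_{k-1})$. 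The formula one actually needs (and which the paper obtains) is of the shape $\varliminf_k \log q_k /(\log q_k+\log q_{k+1})$, which does give $1/(w+1)=1/2$ when $w(\theta)=1$.

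More seriously, you dismiss the separation and counting as ``routine'', but this is precisely where the case $\tau=1$, $w(\theta)=1$ is hard. From Lemma~\ref{lem1}, the intervals making up $F_k$ have length of order $(\|q_k\theta\|/q_k)^{1/2}$, which for $w(\theta)=1$ is of the \emph{same} order as the spacing $\|q_{k-1}\theta\|$ between consecutive orbit points. So one cannot simply invoke Fact~\ref{lower}: the intervals may or may not overlap depending on the exact values of the partial quotients $a_{k+1}$, and the constants matter. The paper devotes all of Section~\ref{sec_beta1} to this: it pins down the interval endpoints via the integer $r_{k+1}=\lfloor\sqrt{4a_{k+1}+5}\rfloor-3$ (Lemmas~\ref{ak4} and \ref{fk}), distinguishes several cases according to whether $a_{k+1}$ and $a_{k+2}$ equal $1$, $2$, or are $\ge 3$, merges consecutive levels into auxiliary sets $D_k$ when $a_{k+2}=2$, counts children using the Ostrowski expansion and Fibonacci numbers (Lemma~\ref{num}), and only then applies Fact~\ref{lower} to obtain Theorem~\ref{beta1}, from which the bound $\dim_H(\mathcal U_1[\theta])\ge 1/(w(\theta)+1)$ --- and hence your $\tfrac12$ --- follows. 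None of this structure is visible in your sketch, and it is not recoverable from the three-distance theorem alone with ``explicit constants''.
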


\begin{theorem}\label{bounds}
For any irrational $\theta$ with $w(\theta) = w > 1$, we have
\begin{align*}
\frac{w /\tau -1 }{w^2-1} \le &\Hdim \left( \mathcal U_{\tau} [\theta] \right) \le \frac{1/\tau+1}{w+1}, &\frac 1w \le \tau \leq 1, \\
0 \le &\Hdim \left( \mathcal U_{\tau} [\theta] \right) \le \frac{w /\tau-1}{w^2-1}, &1 < \tau \le w.
\end{align*}
Moreover, if $w(\theta)= \infty$, then $\Hdim \left( \mathcal U_{\tau} [\theta] \right) = 0$ for all $\tau > 0$.
\end{theorem}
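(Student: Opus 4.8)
The plan is to deduce Theorem~\ref{bounds} directly from the two closed formulas of Theorem~\ref{main_theorem} by estimating the lower limits occurring there. Writing $w:=w(\theta)$, $a_k:=\log n_k$, $b_k:=-\log\|n_k\theta\|$ and $r_k:=b_k/a_k$, and expanding the logarithms of the products, those formulas become
\[
\dim_H\bigl(\mathcal U_\tau[\theta]\bigr)=
\begin{dcases}
\varliminf_{k\to\infty}\frac{(\tfrac1\tau+1)a_k-\sum_{j<k}(b_j-\tfrac{a_j}{\tau})}{a_k+b_k}, & \tfrac1w<\tau<1,\\
\varliminf_{k\to\infty}\frac{\sum_{j<k}(\tfrac{b_j}{\tau}-a_j)}{a_k+b_k}, & 1<\tau<w.
\end{dcases}
\]
First I would isolate two elementary facts. (a) From $1/(q_{k+1}+q_k)<\|q_k\theta\|<1/q_{k+1}$ and the classical identity $w=\limsup_k(\log q_{k+1})/(\log q_k)$ one gets $r_k=(\log q_{m_k+1})/(\log q_{m_k})+o(1)$ for $n_k=q_{m_k}$, and, crucially, $\limsup_k r_k=w$: there are infinitely many convergents $q_m$ with $(\log q_{m+1})/(\log q_m)$ as close to $w$ as we please, and each such $q_m$ satisfies the inequality defining $(n_k)$ because $q_m\|q_m\theta\|^\tau$ (resp.\ $q_m^\tau\|q_m\theta\|$) then equals $\exp(-c\log q_m)$ with $c>0$. (b) The backward-gap bound $b_{k-1}\le a_k+\log 2$, immediate from $q_{m_{k-1}+1}\le q_{m_k}$. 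I would also note that the defining inequality of $(n_k)$ reads $r_k>1/\tau$ when $\tau<1$ and $r_k>\tau-(\log 2)/a_k$ when $\tau>1$.

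For the upper bounds: if $\tfrac1w<\tau<1$ every summand $b_j-a_j/\tau$ is positive, so the numerator is $\le(\tfrac1\tau+1)a_k$, and evaluating along a subsequence on which $r_k\to w$ gives $\dim_H\le(\tfrac1\tau+1)/(w+1)$ at once. If $1<\tau<w$ this term-by-term estimate is too lossy; instead I would prove, for each $\varepsilon>0$ and all large $k$, the partial-sum bound $\sum_{j<k}(\tfrac{b_j}{\tau}-a_j)\le\tfrac{(w+\varepsilon)/\tau-1}{w+\varepsilon-1}\,a_k+O_\varepsilon(1)$ by induction on $k$; the coefficient is precisely the one forced by the extremal geometric sequence $a_{j+1}=(w+\varepsilon)a_j$. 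The inductive step uses (b), the bound $r_k\le w+\tfrac\varepsilon2$ (valid for large $k$ by (a)), and the elementary identity $(\tfrac{w'}{\tau}-1)(r-1)-(w'-1)(\tfrac r\tau-1)=(1-\tfrac1\tau)(w'-r)$ with $w'=w+\varepsilon$, which is a definite positive quantity and so absorbs the additive constant once $a_k$ is large. Inserting this into the formula, passing to a subsequence with $r_k\to w$, and letting $\varepsilon\to0$, I would obtain $\dim_H\le(w/\tau-1)/(w^2-1)$.

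For the lower bounds: when $1<\tau<w$ nothing beyond $\dim_H\ge0$ is needed. When $\tfrac1w<\tau<1$ the same inductive scheme gives $\sum_{j<k}(b_j-\tfrac{a_j}{\tau})\le\tfrac{w+\varepsilon-1/\tau}{w+\varepsilon-1}\,a_k+O_\varepsilon(1)$ (the inductive step again resting on $(1-\tfrac1\tau)(w'-r)>0$, now with $\tau<1$), so the numerator is $\ge\tfrac{(w+\varepsilon)/\tau-1}{w+\varepsilon-1}\,a_k-O_\varepsilon(1)$ (using $(\tfrac1\tau+1)-\tfrac{v-1/\tau}{v-1}=\tfrac{v/\tau-1}{v-1}$), while $a_k+b_k=a_k(1+r_k)\le a_k(1+w+\varepsilon)$; letting $\varepsilon\to0$ yields $\dim_H\ge(w/\tau-1)/(w^2-1)$. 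The three endpoint values $\tau\in\{1/w,1,w\}$, not covered by Theorem~\ref{main_theorem}, I would get from the monotonicity $\mathcal U_{\tau'}[\theta]\subseteq\mathcal U_\tau[\theta]$ for $\tau'\ge\tau$: $\dim_H(\mathcal U_\tau[\theta])$ is non-increasing in $\tau$, so the bounds just proved on the open intervals pass to their endpoints by continuity of the right-hand sides in $\tau$ (at $\tau=1/w$ also using $\mathcal U_\tau[\theta]=\mathbb T$ for $\tau<1/w$, at $\tau=w$ using $\dim_H\ge0$). Finally, if $w(\theta)=\infty$ then $\limsup_k r_k=\infty$, and the crude numerator bounds ($\le(\tfrac1\tau+1)a_k$ for $\tau<1$, and $\le\tfrac{a_k}{\tau}+o(a_k)$ for $\tau>1$, using $b_j\le a_{j+1}+\log2$) push the quotient to $0$ along a subsequence with $r_k\to\infty$; hence $\dim_H=0$ for every $\tau\ne1$, and also for $\tau=1$ by monotonicity.

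The real obstacle, I expect, is the inductive partial-sum estimate that replaces the naive term-by-term bound: one must identify the correct bound linear in $a_k$ (equivalently in $b_k$), with the constant dictated by the geometric extremizer of ratio $w$, and then check that the backward-gap inequality (b) together with the $\limsup$ control $r_k\le w+\varepsilon$ make the induction close uniformly in $k$, with enough slack to swallow the additive error. Granting that, the remaining work is bookkeeping: expanding the formula, establishing (a) and (b), and carrying out the two $\varepsilon\to0$ passages.
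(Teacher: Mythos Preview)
Your derivation on the open intervals $(1/w,1)$ and $(1,w)$ is correct and essentially parallels the paper's: both start from the formula of Theorem~\ref{main_theorem}, use the backward-gap inequality (your (b), the paper's $\log q_{k_j}/\log q_{k_i}\le (w_j\cdots w_{i-1})^{-1}$), and control the partial sums. The paper packages this as a telescoping/geometric-series estimate via $S_{i-1}=\sum_j (w_j\cdots w_{i-1})^{-1}\gtrsim 1/(w-1)$, whereas you run an induction with the same extremal constant; the content is the same.

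However, your handling of the endpoint $\tau=1$ has a genuine gap. Monotonicity of $\tau\mapsto\dim_H(\mathcal U_\tau[\theta])$ together with continuity of the bound functions only transfers the \emph{lower} bound to the \emph{left} endpoint of an interval and the \emph{upper} bound to the \emph{right} endpoint. Concretely: from $\dim_H(\mathcal U_\tau[\theta])\ge \frac{w/\tau-1}{w^2-1}$ on $(1/w,1)$ and non-increase in $\tau$ you get $\dim_H(\mathcal U_{1/w}[\theta])\ge 1$ (taking $\tau\downarrow 1/w$), but at $\tau=1$ monotonicity gives only $\dim_H(\mathcal U_1[\theta])\le\dim_H(\mathcal U_\tau[\theta])$, which yields the upper bound $\frac{2}{w+1}$ and says nothing from below. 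From the $(1,w)$ side the available lower bound is $0$, so that does not help either. Thus the inequality $\dim_H(\mathcal U_1[\theta])\ge \frac{1}{w+1}$ is \emph{not} a consequence of Theorem~\ref{main_theorem} plus monotonicity. In the paper this is supplied by a separate and nontrivial argument (Theorem~\ref{beta1}), which requires a finer analysis of the structure of $F_k$ at $\tau=1$ (the quantities $r_{k+1}$, $\tilde r_{k+1}$, Lemmas~\ref{ak4}--\ref{gap}). Your endpoints $\tau=1/w$ and $\tau=w$ are fine, and your $w=\infty$ case (including $\tau=1$) is fine since there the target is an upper bound of $0$; but for $1<w<\infty$ the lower bound at $\tau=1$ needs this extra input.
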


We will show in Examples~\ref{exam1}, \ref{exam2}, \ref{examb1} and \ref{examb2},
that the upper and lower bounds of Theorems~\ref{bounds_beta_1} and \ref{bounds} can be all reached.

\begin{remark}
Consider the case $\tau > 1$. By optimizing the upper bound in Theorem \ref{bounds} with respect to $w$, we have for $\tau > 1$, 
\[
    \Hdim \left( \mathcal U_{\tau} [\theta] \right) \le \frac{1}{2\tau(\tau+\sqrt{\tau^2- 1})},
\]
and the equality holds when $w= \tau+\sqrt{\tau^2-1}$.
We then deduce that 
$$\Hdim \left( \mathcal U_{\tau} [\theta] \right) <\frac{1}{2\tau^2}<\frac{1}{2\tau} \quad \text{ for all } \tau > 1.$$ 
As we have mentioned, for all $\tau > 1$, 
$\mathcal U_{\tau} [\theta] $ is included in $\mathcal L_{\tau} [\theta] $ except for a countable set of points. In fact, excluding the countable set $\{n\theta : n\in \mathbb{N}\}$, the set $\mathcal L_{\tau} [\theta] $ is a level set of the lower limit of the hitting time for the irrational rotation $x\mapsto x+ \theta$ (see for example Lemma 4.2 of \cite{FST13} and Lemma 3.2 of \cite{LS13}), while the set $\mathcal U_{\tau} [\theta] $ is a level set of upper limit of the same hitting time. So the fact that $\mathcal U_{\tau} [\theta] $ is almost included in $\mathcal L_{\tau} [\theta] $ follows directly from the fact that lower limit is less than the upper limit. 
Recall that $\Hdim( \mathcal L_{\tau} [\theta] )=1/\tau$ for all $\tau >1$. 
Our result then shows that the inclusion is strict in the sense of Hausdorff dimension: the former is strictly less than one-half of the latter one by Hausdorff dimension. 
\end{remark}

We will also prove the following theorem on the continuity of the Hausdorff dimension of the set $\mathcal U_{\tau} [\theta] $ with respect to the parameter $\tau$. 
\begin{theorem}\label{thm_conti}
For each irrational $\theta$,  $\Hdim \left(\mathcal U_\tau [ \theta ] \right)$ is a continuous function of $\tau$ on $(0, 1) \cup ( 1, \infty) $.
\end{theorem}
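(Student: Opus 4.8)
The plan is to deduce continuity directly from the explicit formula of Theorem~\ref{main_theorem}, using throughout the elementary monotonicity $D(\tau_1)\ge D(\tau_2)$ for $\tau_1<\tau_2$ (immediate from $\mathcal U_{\tau_2}[\theta]\subseteq\mathcal U_{\tau_1}[\theta]$), where I abbreviate $D(\tau):=\dim_H\big(\mathcal U_\tau[\theta]\big)$. First I would dispose of the trivial ranges: if $w(\theta)=\infty$ then $D\equiv0$ by Theorem~\ref{bounds}; if $w(\theta)=1$ then $D\equiv1$ on $(0,1)$ and $D\equiv0$ on $(1,\infty)$; and if $1<w:=w(\theta)<\infty$ then $D\equiv1$ on $(0,1/w)$ and $D\equiv0$ on $(w,\infty)$ by Theorem~\ref{main_theorem}, while continuity at the two endpoints is forced by Theorem~\ref{bounds} together with monotonicity — as $\tau\downarrow1/w$ the lower bound $(w/\tau-1)/(w^2-1)$ tends to $1$, pinching $D(\tau)\to1=D(1/w)$, and as $\tau\uparrow w$ the upper bound $(w/\tau-1)/(w^2-1)$ tends to $0$, pinching $D(\tau)\to0=D(w)$. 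The whole problem thus reduces to continuity on the two open intervals $(1/w,1)$ and $(1,w)$.

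Here I would set up common notation. For a denominator $q$ of a convergent of $\theta$ put $\alpha(q):=\log q$ and $\beta(q):=-\log\|q\theta\|$, so that $\beta(q_j)=\log q_{j+1}+O(1)$ and $\varliminf_j\alpha(q_j)/\beta(q_j)=1/w$. On $(1/w,1)$ the sequence $(n_k)$ of Theorem~\ref{main_theorem} is the set $S(\tau):=\{q_j:\alpha(q_j)<\tau\beta(q_j)\}$, which is infinite (since $1/w<\tau$) and nondecreasing in $\tau$, and the formula reads $D(\tau)=\varliminf_{q\in S(\tau)}\Phi_\tau(q)$ with
\[
\Phi_\tau(q)=\frac{(1/\tau+1)\alpha(q)+\sum_{q'\in S(\tau),\,q'<q}\big((1/\tau)\alpha(q')-\beta(q')\big)}{\alpha(q)+\beta(q)}\,;
\]
on $(1,w)$ one has the parallel description with $1/\tau$ in the role of $\tau$. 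The first real step is a geometric-growth observation: if $q<q'$ are consecutive in $S(\tau)$, then $q'$ is at least the successor of $q$ among the convergent denominators, hence $q'\gtrsim q^{1/\tau}$, so $\alpha(\cdot)$ grows along $S(\tau)$ at least geometrically with ratio $1/\tau>1$. Consequently $\sum_{q'\in S(\tau),\,q'<q}\alpha(q')$ and $\sum_{q'\in S(\tau),\,q'<q}\beta(q')$ are both $O(\alpha(q))$, uniformly for $\tau$ in a compact subinterval. Since the denominator $\alpha(q)+\beta(q)$ of $\Phi_\tau(q)$ does not depend on $\tau$, expanding $\Phi_\tau(q)-\Phi_{\tau_0}(q)$ (with $\Phi_{\tau_0}$ using $S(\tau_0)$ in the sum, hence defined on $S(\tau)\cup S(\tau_0)$) and estimating both the explicit $1/\tau$–terms and the summands indexed by the near-threshold symmetric difference $S(\tau)\triangle S(\tau_0)$ yields the uniform Lipschitz bound $|\Phi_\tau(q)-\Phi_{\tau_0}(q)|\le C|\tau-\tau_0|$, valid for all such $q$ and all $\tau$ near $\tau_0$.

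Given this, the interval $(1,w)$ is the easy one: there $S(\tau)$ \emph{shrinks} as $\tau$ grows, and since the monotonicity of $D$ cooperates with the inclusions, for $\tau$ on either side of a fixed $\tau_0$ the Lipschitz bound and the trivial inequality ``$\varliminf$ over a subset is $\ge$ that over a superset'' immediately squeeze $D(\tau)$ to $D(\tau_0)$. The interval $(1/w,1)$ is subtler because there $S(\tau)$ \emph{grows} as $\tau$ grows — the same direction in which $D$ decreases — so the symmetric-difference terms must be shown not to lower the $\varliminf$. For this I would exploit the one-step recursion
\[
\Phi_{\tau_0}(q)=\frac{(1/\tau_0+1)\alpha(q)}{\alpha(q)+\beta(q)}-\big(1-\Phi_{\tau_0}(q^-)\big)\frac{\alpha(q^-)+\beta(q^-)}{\alpha(q)+\beta(q)},\qquad q^-:=\text{predecessor of }q\text{ in }S(\tau_0),
\]
which, using $\alpha(q^-)<\tau_0\beta(q^-)$, $\beta(q^-)\le\alpha(q)+O(1)$, and the a priori bound $\Phi_{\tau_0}(q^-)\ge D(\tau_0)-o(1)$ (just the definition of the $\varliminf$), shows that on denominators $q$ with $\alpha(q)/\beta(q)$ close to $\tau_0$ — precisely the members of the symmetric difference — one has $\Phi_{\tau_0}(q)\ge D(\tau_0)+(1-\tau_0)\big(1-D(\tau_0)\big)-o(1)$. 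Since $D(\tau_0)<1$ on $(1/w,1)$ by Theorem~\ref{bounds}, the right-hand side exceeds $D(\tau_0)$ by a fixed positive amount; hence (i) for $\tau>\tau_0$ the denominators newly entering $S(\tau)$ cannot pull the $\varliminf$ below $D(\tau_0)-o(1)$, and (ii) for $\tau<\tau_0$ the denominators achieving $\varliminf_{q\in S(\tau_0)}\Phi_{\tau_0}=D(\tau_0)$ must stay bounded away from the threshold, hence remain in $S(\tau)$. Combined with the Lipschitz bound and monotonicity this gives $D(\tau)\to D(\tau_0)$ from both sides, so $D$ is continuous — indeed locally Lipschitz — on $(1/w,1)$, and thus on all of $(0,1)\cup(1,\infty)$.

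The step I expect to fight with is exactly the behaviour of the $\varliminf$ under the $\tau$-dependent index set $S(\tau)$: its ``breakpoints'' $\alpha(q_j)/\beta(q_j)$ may accumulate anywhere in $(1/w,1)$, so in every neighbourhood of $\tau_0$ infinitely many denominators enter or leave the subsequence. What makes the argument go through is the geometric-growth observation, which simultaneously bounds each individual perturbation (entering/leaving denominators sit at the threshold and contribute only $O(|\tau-\tau_0|\,\alpha(q))$ to the numerator) and controls the cumulative effect (all partial sums being comparable to the $\tau$-independent denominator $\alpha(q)+\beta(q)$); the remaining estimates are routine bookkeeping.
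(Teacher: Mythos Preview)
Your plan is correct and follows essentially the same route as the paper's proof: both arguments start from the formula of Theorem~\ref{main_theorem} rewritten (as in \eqref{bg_bl}) in terms of the exponents $w_j$, exploit the geometric growth $q_{k_{j+1}}\gtrsim q_{k_j}^{1/\tau}$ to control the tail sums, and then show that the elements of the $\tau$--dependent index set which sit near the threshold $\alpha(q)/\beta(q)\approx\tau_0$ cannot realise the $\varliminf$. Your one--step recursion for $\Phi_{\tau_0}$ and the bound $\Phi_{\tau_0}(q)\ge D(\tau_0)+(1-\tau_0)(1-D(\tau_0))-o(1)$ at threshold is exactly what the paper encodes in the inequality chain leading to \eqref{cont-eq2}; the paper then packages the remaining error into the explicit Lipschitz constant $\frac{\tau-\tau'}{\tau'(1-\tau^2)}$ (resp.\ $\frac{(\tau'-\tau)w}{\tau\tau'(\tau^2-1)}$ on $(1,w)$), whereas you leave $C$ implicit.
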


Finally, we note that our results give an answer for the case of dimension one of Problem 3 in Bugeaud and Laurent \cite{BuLa}.  We also remark that the uniform approximation problem for the $b$-ary and $\beta$-expansion has been recently studied by Bugeaud and Liao \cite{BuLi}. The symbolic technique which is quite efficient in \cite{BuLi} falls in our context.

The paper is organized as follows. 
Some lemmas for the structure of uniform approximation set $\UU_{\tau}[\theta]$ are stated in Section~\ref{Dist}.
The proof of Theorem~\ref{main_theorem} is given in Section~\ref{Sec_pfthm1}.
In Section~\ref{sec_beta1} we discuss the set $\UU_{\tau}[\theta]$ for $\tau = 1$ and prove Theorem~\ref{bounds_beta_1}.
Section~\ref{sec_pf} is devoted to the proofs of Theorems \ref{bounds} and \ref{thm_conti}.
In the last section, we give the examples in which the bounds of Theorems \ref{bounds_beta_1} and \ref{bounds} are attained.

\medskip
\section{Cantor structures}\label{Dist}

In this section, we first give some basic notations and properties on the continued fraction expansion of irrational numbers which will be useful later. Then we describe in detail the Cantor structure of the sets $\mathcal U_{\tau} [\theta] $.

Let $\theta\in [0,1]$ be an irrational and $\{a_k\}_{k\geq 1}$ be the partial quotients of $\theta$ in its continued fraction expansion.
The denominator $q_k$ and the numerator $p_k$ of the $k$-th convergent ($q_0 = 1, p_0 = 0$), satisfy the following relations
\begin{equation}\label{pq-recurrence}
p_{n+1}=a_{n+1}p_n+p_{n-1},\quad q_{n+1}=a_{n+1}q_n+q_{n-1},\quad \forall n\geq 1.
\end{equation}
A corresponding useful recurrence property is 
\begin{equation}\label{qn-eq-1}
\|q_{n-1}\theta\|=a_{n+1}\|q_{n}\theta\|+\|q_{n+1}\theta\|.
\end{equation}
We also have the equality
\begin{equation}\label{qn-eq-2}
q_{n+1}\|q_{n}\theta\|+q_n\|q_{n+1}\theta\|=1,
\end{equation}
and the estimation
\begin{equation}\label{qn-estimate}
{1\over 2q_{n+1}} <  {1\over q_{n+1}+q_n}< \|q_{n}\theta\| \leq {1 \over q_{n+1}}.
\end{equation}

Recall that the irrationality exponent of $\theta$ is defined by $ w(\theta) := \sup \{ s > 0 : \liminf_{j \to \infty} j^{s} \| j \theta \| = 0 \}$.
By the theorem of best approximation (e.g. \cite{RS}), we can show that 
\begin{equation}
w(\theta) =\limsup_{n\to\infty} {\log q_{n+1}\over \log q_n}. \label{def-w}
\end{equation}
Since $(q_n)$ is increasing, we have $w(\theta)\geq 1 $ for every irrational number $\theta$. 
The set of irrational numbers with $w(\theta)=1 $ has measure $1$ and includes the set of irrational numbers with bounded partial quotients,
which is of measure $0$ and of Hausdorff dimension $1$.
There exist numbers with $w(\theta)=\infty$, called the Liouville numbers. For more details on continued fractions, we refer to Khinchine's book \cite{Khinchin}.

In the following, we will investigate the Cantor structure of our main object $\UU_\tau [\theta]$. 
Denote by $B(x,r) $ the open ball of center $x$ and radius $r$ in $\mathbb T$.
Fix $\tau >0$. Let 
$$G_n=  \bigcup_{i=1}^n B \left( i \theta , \frac 1{n^\tau} \right) \  \text{ and } \ F_k = \bigcap_{n = q_k }^{q_{k+1}} G_n .
$$
Then we have 
$$ \UU_{\tau}[\theta] = \bigcup_{\ell=1}^{\infty} \bigcap_{n=\ell}^\infty G_n = \bigcup_{\ell=1}^{\infty} \bigcap_{k=\ell}^\infty F_k .$$

We will calculate the Hausdorff dimension of $\bigcap_{k=1}^\infty F_k $. From the construction, we will see that for all $\ell$, the Hausdorff dimensions of $\bigcap_{k=\ell}^\infty F_k $ are the same to that of $\bigcap_{k=1}^\infty F_k $. Thus by countable stability of the Hausdorff dimension, 
\[\Hdim(\UU_\tau [\theta] ) =\Hdim\left(\bigcap_{k=1}^\infty F_k \right). \]

For $m \geq 1$, set \[E_m:= \bigcap_{k=1}^m F_k. \]
Then for each $m$, $E_m$ is a union of intervals, and we have   
\[ \forall m \geq 1, \ E_{m+1}\subset E_{m},\quad \text{and} \quad \bigcap_{m=1}^\infty  E_m = \bigcap_{k=1}^\infty F_k. \]
We are thus led to the calculation of the Hausdorff dimension of the nested Cantor set $\bigcap_{m=1}^\infty E_m$.
 To this end, let us first investigate the structure of $F_k$.
 
We note that $q_k \theta - p_k > 0$ if and only if $k$ is even.
In the following lemmas, we will only consider  formulae of $F_k$ for even $k$'s since for the odd $k$'s we will have symmetric formulae.

The well-known Three Step Theorem (e.g. \cite{Slater}) shows that by the points $\{ i\theta \}_{i=1}^{q_k}$, the unit circle
$\mathbb T$ is partitioned into $q_k$ intervals of length $\|q_{k-1} \theta \|$ or $\|q_{k-1} \theta \| + \| q_k \theta \|$.
Furthermore, for even $k$, we have
\begin{equation}\label{basicpartition}
\begin{split}
&\mathbb T \setminus \{ -i\theta : 0 \le i < q_k \}  \\ 
&=\bigcup_{i = 1}^{q_{k-1}} ( (i-q_k) \theta , (i- q_{k-1}) \theta ) \cup \bigcup_{i = q_{k-1} +1 }^{q_k} ( (i-q_k) \theta, (i-q_k-q_{k-1})\theta )\\
&=\bigcup_{i = 1}^{q_{k-1}} ( i \theta - \|q_k \theta \|, i\theta + \| q_{k-1} \theta \| )  \cup \bigcup_{i = q_{k-1} +1 }^{q_k} ( i \theta - \| q_k \theta\| , i\theta + \|q_k \theta \| -\| q_{k-1}\theta \|).
\end{split}
\end{equation}
We remind that here and further throughout the paper, we will always consider $i\theta$ as a point in $\mathbb{T}$, but not in $\mathbb{R}$. 
So the absolute values of these points are always less than $1$. 
In particular, $q_k\theta=\|q_k\theta\|$ if $k$ is even.

\begin{lemma}\label{fkt}
(i) If 
\begin{equation}\label{con}
2 \left( \frac 1{q_{k+1}} \right)^{\tau} > \| q_{k-1} \theta \| + \| q_{k} \theta \|,
\end{equation}
then we have  $F_k = \mathbb T.$

(ii) For the case of $\tau = 1$ and $a_{k+1} = 1$, we have $F_k = \mathbb T$. 
\end{lemma}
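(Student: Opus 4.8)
The plan is to show that under either hypothesis, every point of $\mathbb T$ lies in $G_n$ for all $q_k < n \le q_{k+1}$, which is exactly the statement $F_k = \mathbb T$. The key tool is the Three Step Theorem description of the gaps between the points $\{i\theta\}_{i=1}^{n}$: for such an $n$, the circle is partitioned into intervals whose lengths are controlled by $\|q_{k-1}\theta\|$ and $\|q_k\theta\|$ (with the longest possible gap being $\|q_{k-1}\theta\| + \|q_k\theta\|$, attained when $n$ is small relative to $q_{k+1}$, and shrinking toward $\|q_k\theta\|$ as $n$ grows to $q_{k+1}$). Since $G_n$ is the union of the balls of radius $n^{-\tau}$ around those points, $G_n = \mathbb T$ precisely when each gap has length at most $2 n^{-\tau}$, i.e. when twice the radius covers the largest gap. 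For part (i), I would first observe that the largest gap among $\{i\theta\}_{i=1}^n$ for any $n$ in the range $q_k < n \le q_{k+1}$ is at most $\|q_{k-1}\theta\| + \|q_k\theta\|$ (this is the worst case, occurring at $n = q_k+1$; for larger $n$ new points only subdivide existing gaps), and that the radius $n^{-\tau} \ge q_{k+1}^{-\tau}$ throughout the range since $n \le q_{k+1}$. Combining these with hypothesis \eqref{con} gives $2 n^{-\tau} \ge 2 q_{k+1}^{-\tau} > \|q_{k-1}\theta\| + \|q_k\theta\| \ge (\text{largest gap at level } n)$, so $G_n = \mathbb T$ for every such $n$, hence $F_k = \bigcap_{n=q_k+1}^{q_{k+1}} G_n = \mathbb T$.

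For part (ii), with $\tau = 1$ and $a_{k+1} = 1$, I would use the recurrence $q_{k+1} = a_{k+1} q_k + q_{k-1} = q_k + q_{k-1}$ together with \eqref{qn-eq-2}, which gives $q_{k+1}\|q_k\theta\| + q_k\|q_{k+1}\theta\| = 1$, and the refined Three Step picture. When $a_{k+1}=1$ the partition of $\mathbb T$ by $\{i\theta\}_{i=1}^{q_k}$ already consists only of intervals of the two lengths $\|q_{k-1}\theta\|$ and $\|q_{k-1}\theta\|+\|q_k\theta\|$, and for each intermediate $n$ with $q_k < n < q_{k+1}$ one checks directly that the new points fall so as to keep every gap of length at most $\|q_{k-1}\theta\| + \|q_k\theta\|$, while $\|q_{k-1}\theta\| = a_{k+1}\|q_k\theta\| + \|q_{k+1}\theta\| = \|q_k\theta\| + \|q_{k+1}\theta\|$ by \eqref{qn-eq-1}. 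Then the largest gap is $\|q_{k-1}\theta\| + \|q_k\theta\| = 2\|q_k\theta\| + \|q_{k+1}\theta\|$, which by \eqref{qn-estimate} is at most $2/q_{k+1} + 1/q_{k+2} \le 2/n$ for all $n \le q_{k+1}$ after a short computation using $q_{k+2} \ge q_{k+1}$; hence again $2n^{-1}$ covers every gap and $G_n = \mathbb T$.

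The main obstacle is the bookkeeping in part (ii): unlike part (i), the inequality \eqref{con} need not hold when $\tau=1$ and $a_{k+1}=1$ (indeed $2/q_{k+1} = 2\|q_k\theta\|\cdot q_{k+1}\|q_k\theta\|^{-1}\cdot\ldots$ is delicate near equality), so one cannot simply invoke (i). The argument must instead track, for each $n$ in the range, exactly which points $\{i\theta\}_{i\le n}$ have been placed and verify the largest gap is $\le 2/n$ on the nose — this requires the precise Three Step partition recalled before the lemma (the splitting into the two families indexed by $1\le i\le q_{k-1}$ and $q_{k-1}+1\le i\le q_k$) and careful use of \eqref{qn-eq-1}, \eqref{qn-eq-2} rather than the crude estimate \eqref{qn-estimate}. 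I expect the case $n = q_{k+1}$ itself to be the tightest, where equality in the covering condition is approached, so the strict inequalities in \eqref{qn-estimate} will be exactly what saves the argument.
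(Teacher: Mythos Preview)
Your argument for part~(i) is correct and is essentially the paper's proof: the largest gap among $\{i\theta\}_{1\le i\le q_k}$ is $\|q_{k-1}\theta\|+\|q_k\theta\|$, and since $n^{-\tau}\ge q_{k+1}^{-\tau}$ for $q_k<n\le q_{k+1}$, hypothesis \eqref{con} forces $G_n=\mathbb T$ throughout the range.

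For part~(ii), however, you have misidentified the obstacle. Your claim that ``the inequality \eqref{con} need not hold when $\tau=1$ and $a_{k+1}=1$'' is false: in fact \eqref{con} \emph{always} holds in this case, and the paper proves (ii) precisely by verifying \eqref{con} and then invoking (i). The computation is short. With $a_{k+1}=1$, relation \eqref{qn-eq-1} gives $\|q_{k-1}\theta\|=\|q_k\theta\|+\|q_{k+1}\theta\|$, so
\[
q_{k+1}\bigl(\|q_{k-1}\theta\|+\|q_k\theta\|\bigr)
=2q_{k+1}\|q_k\theta\|+(q_k+q_{k-1})\|q_{k+1}\theta\|
=2-q_k\|q_{k+1}\theta\|+q_{k-1}\|q_{k+1}\theta\|<2,
\]
using \eqref{qn-eq-2} and $q_{k-1}<q_k$. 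This is exactly $\|q_{k-1}\theta\|+\|q_k\theta\|<2/q_{k+1}$, i.e.\ \eqref{con} with $\tau=1$.

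Your proposed alternative --- tracking gaps separately for each $n$ in the range --- is therefore unnecessary. Worse, the version you sketched, bounding $2\|q_k\theta\|+\|q_{k+1}\theta\|$ via \eqref{qn-estimate} by $2/q_{k+1}+1/q_{k+2}$, does not give what you need: that quantity exceeds $2/q_{k+1}$, so the crude estimate fails at $n=q_{k+1}$ regardless of how you use $q_{k+2}\ge q_{k+1}$. You correctly sensed that one must use the exact identities \eqref{qn-eq-1}, \eqref{qn-eq-2}; but once you do, you will have proved \eqref{con} itself, and the per-$n$ analysis collapses to an application of part~(i).
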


\begin{proof}
(i) For each $q_k \le n \leq q_{k+1} $ we have 
$$ 2 \left(\frac 1n\right)^{\tau} \geq 2 \left(\frac1{q_{k+1}}\right)^{\tau} > \|q_{k-1} \theta\| + \|q_k \theta\|. $$
Since any two neighboring points in $\{ i \theta : 1 \le i \le q_k \}$ are distanced by $\| q_{k-1} \theta \|$ or $\| q_{k-1} \theta\| + \| q_k \theta\|$, all intervals overlap. Hence,
\begin{equation*}
G_n = \bigcup_{i=1}^n B \left( i\theta , n^{-\tau} \right) = \mathbb T.
\end{equation*}
The result then follows.

(ii) If $a_{k+1} = 1$, then by (\ref{qn-eq-1}) and (\ref{qn-eq-2}) we have
\begin{align*}
q_{k+1} \left( \| q_{k-1} \theta \| + \| q_k \theta \| \right) 
&= q_{k+1} \left( 2 \| q_k \theta \| + \| q_{k+1} \theta \| \right) \\
&= 2 q_{k+1} \| q_k \theta \| + \left(q_k + q_{k-1} \right) \| q_{k+1} \theta \| \\
&= 2 - q_k \| q_{k+1} \theta \| +  q_{k-1} \| q_{k+1} \theta \| < 2.
\end{align*}
Hence, by (i), if $\tau=1$, 
$ F_k = \mathbb T.$
\end{proof}

\begin{lemma}\label{lem1}
For any $\tau \le 1$, we have \\
(i) $$ F_k \supset \bigcup_{i = 1}^{q_{k}}  \left ( i\theta -  \left(\frac{1}{q_{k+1}}\right)^{\tau}, \  i \theta + \min_{1 \le c \le a_{k+1} +1} \left( (c-1) \|q_k \theta\| + \frac{1}{( c q_k + i -1)^{\tau}} \right) \right). $$

\noindent (ii)
$$
F_k \supset 
\bigcup_{i = 1}^{q_k} 
\left( i\theta - \| q_k \theta \|, \  i\theta +  \left( C_\tau \left( \frac{1 }{q_k^\tau \| q_k \theta \|} \right)^{\frac{1}{\tau+1}} - 2 \right)  \| q_k \theta\| \right),
$$
where $C_\tau = \tau^{\frac{1}{\tau+1}} + \tau^{-\frac{\tau}{\tau+1}}$.
Note that $1 < C_\tau \le 2$.

\noindent (iii)
$$
F_k \subset \bigcup_{i = 1}^{q_k} 
\left( i\theta -  \tau^{-\frac{\tau}{\tau+1}} \left( \frac{\| q_k \theta \|}{q_k} \right)^{\frac{\tau}{\tau+1}}, \  i\theta + C_\tau \left( \frac{\| q_k \theta \| }{q_k} \right)^{\frac{\tau}{\tau+1}} \right).
$$
\end{lemma}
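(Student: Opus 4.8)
The plan is to derive all three parts from a single structural description of the point set $\{j\theta : 1 \le j \le n\}$ for $q_k < n \le q_{k+1}$; I treat only even $k$, the odd case following by reflecting $\mathbb T$. Writing $j = cq_k + i$ with $1 \le i \le q_k$ and an integer $c \ge 0$, I would first record that $j\theta = i\theta + c\|q_k\theta\|$ holds in $\mathbb T$ with no wrap-around whenever $j \le q_{k+1}$: indeed $q_k\theta = \|q_k\theta\|$ in $\mathbb T$ since $k$ is even, and $c\|q_k\theta\| \le a_{k+1}\|q_k\theta\| \le q_{k+1}\|q_k\theta\| < 1$ by \eqref{qn-estimate}. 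Combined with the explicit description of $\mathbb T \setminus \{i\theta : 1 \le i \le q_k\}$ for even $k$ recalled above, this says that the points $j\theta$ ($j \le n$) sitting to the right of a fixed $i\theta$ inside its Three-Step gap form a chain $i\theta,\ i\theta + \|q_k\theta\|,\ \dots,\ i\theta + \lfloor (n-i)/q_k\rfloor\|q_k\theta\|$. The numerical fact that makes the hypothesis $\tau \le 1$ relevant is $\|q_k\theta\| \le 1/q_{k+1} \le q_{k+1}^{-\tau} \le n^{-\tau}$ for $q_k < n \le q_{k+1}$, which forces consecutive balls $B(i\theta + c\|q_k\theta\|, n^{-\tau})$ in such a chain to overlap.

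For (i) I would fix $i$ and $n$. The half-interval $(i\theta - n^{-\tau}, i\theta]$ lies in $B(i\theta, n^{-\tau}) \subseteq G_n$, and chaining the overlapping balls $B(i\theta + c\|q_k\theta\|, n^{-\tau})$ over $0 \le c \le \lfloor (n-i)/q_k\rfloor$ shows that $(i\theta - n^{-\tau},\ i\theta + \lfloor (n-i)/q_k\rfloor\|q_k\theta\| + n^{-\tau}) \subseteq G_n$, the centres being the admissible indices $j = i + cq_k \le n$. Intersecting over $q_k < n \le q_{k+1}$ then replaces the left radius by its smallest value $q_{k+1}^{-\tau}$ and the right endpoint by $i\theta + r_k(i)$, which is the first inclusion. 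The stated lower bound on $r_k(i)$ I would get by partitioning the admissible $n$ according to the value $\lfloor (n-i)/q_k\rfloor = c-1 \in \{0,\dots,a_{k+1}\}$ and using $n \le cq_k + i - 1$ on each block.

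For (ii) the plan is to optimise the lower bound for $r_k(i)$ from (i): treating $c$ as a real parameter and bounding $cq_k + i - 1 < (c+1)q_k$, I would minimise $c \mapsto (c-1)\|q_k\theta\| + ((c+1)q_k)^{-\tau}$; the minimiser has order $(\tau/(q_k^\tau\|q_k\theta\|))^{1/(\tau+1)}$ and the minimum works out, after rewriting, to the displayed expression, the ``$-2$'' absorbing the shift $c \mapsto c+1$ and the integer rounding, while for the left endpoint I would again just use $\|q_k\theta\| \le q_{k+1}^{-\tau}$. For (iii), the matching upper bound, given $y \in F_k$ I would pick a single scale $n_\ast$, an integer in $(q_k, q_{k+1}]$ as close as possible to $(\tau q_k/\|q_k\theta\|)^{1/(\tau+1)}$, use $F_k \subseteq G_{n_\ast}$ to get $j \le n_\ast$ with $\|y - j\theta\| < n_\ast^{-\tau}$, write $j = cq_k + i$ with $c \le n_\ast/q_k$, and conclude $\|y - i\theta\| < n_\ast^{-\tau} + c\|q_k\theta\| < n_\ast^{-\tau} + (n_\ast/q_k)\|q_k\theta\|$; evaluating the right side at the near-optimal $n_\ast$ gives the right-hand displacement bound $C_\tau(\|q_k\theta\|/q_k)^{\tau/(\tau+1)}$, while the left-hand bound $\tau^{-\tau/(\tau+1)}(\|q_k\theta\|/q_k)^{\tau/(\tau+1)} = n_\ast^{-\tau}$ is automatic because $j\theta$ lies at or to the right of $i\theta$.

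The hard part will be the Three-Step bookkeeping rather than any single estimate: in (i), checking that the ball-chain stays inside $G_n$ for all admissible $n$ simultaneously and that the minimum defining $r_k(i)$ is exactly the displayed one; and in (iii), verifying that the optimal scale $(\tau q_k/\|q_k\theta\|)^{1/(\tau+1)}$ really can be chosen inside $(q_k, q_{k+1}]$ — this is where $\|q_k\theta\| \asymp 1/q_{k+1}$ and $\tau \le 1$ enter — and that the loss from integer rounding and from the estimate $c \le n_\ast/q_k$ does not spoil the clean constants $C_\tau$ and $\tau^{-\tau/(\tau+1)}$, i.e. dealing by hand with the boundary cases $n_\ast = q_k + 1$ and $n_\ast = q_{k+1}$.
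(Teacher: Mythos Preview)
Your proposal is correct and follows essentially the same route as the paper. Parts (i) and (ii) are identical in spirit and detail: chain the overlapping balls $B((i+mq_k)\theta,n^{-\tau})$ using $n^{-\tau}\ge q_{k+1}^{-1}>\|q_k\theta\|$, intersect over $n$, then optimise the lower bound on $r_k(i)$ via the substitution $c\mapsto c+1$ and the real-variable minimum of $x\|q_k\theta\|+(xq_k)^{-\tau}$.

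For (iii) the only difference is cosmetic. The paper does not pick an arbitrary integer $n_\ast$ near the optimum but instead takes $n=cq_k$ with $c=\lceil x_\ast\rceil$, $x_\ast=(\tau/(q_k^\tau\|q_k\theta\|))^{1/(\tau+1)}$. This choice makes the constants fall out immediately: the ceiling gives both $c-1<x_\ast$ (controlling $(c-1)\|q_k\theta\|$) and $c\ge x_\ast$ (controlling $(cq_k)^{-\tau}$), so the right endpoint is $<C_\tau(\|q_k\theta\|/q_k)^{\tau/(\tau+1)}$ and the left radius is $\le\tau^{-\tau/(\tau+1)}(\|q_k\theta\|/q_k)^{\tau/(\tau+1)}$ with no rounding loss. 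The boundary case you flag is exactly the paper's case $c>a_{k+1}$, handled by showing the right-hand union already equals $\mathbb T$. Your version with a general $n_\ast$ works too once you note that $j\le n_\ast=\lceil x_\ast q_k\rceil$ forces $c\le (n_\ast-1)/q_k<x_\ast$, so no constant is lost; taking $n_\ast$ a multiple of $q_k$ just makes this bookkeeping invisible.
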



\begin{proof}
(i) Let $n$ be an integer such that $q_k \le n \le q_{k+1}$ for some $k\in \mathbb{N}$. 
Then if $k$ is even (the case when $k$ is odd is the same up to symmetry), for each $i$ with $1 \le i \le q_k$  we have
\begin{align*}
G_n &= \bigcup_{j=1}^n B \left( j\theta , n^{-\tau}\right)\\
&\supset B \left( i\theta , \frac{1}{n^{\tau}} \right) \cup B\left ( (q_k + i) \theta , \frac{1}{n^{\tau}} \right) \cup \dots \cup B\left ( \Big ( \left\lfloor \frac{n-i}{q_k} \right \rfloor q_k + i \Big)\theta , \frac{1}{n^{\tau}}\right).
\end{align*}
Notice that  for $q_k \le n \le q_{k+1}$
\begin{equation}\label{qk1}
\frac{1}{n^{\tau}} \ge \frac 1n \ge \frac 1{ q_{k+1}} > \|q_k \theta\|.
\end{equation}
Thus, the above $\left\lfloor \frac{n-i}{q_k} \right \rfloor$ intervals overlap and for each $1 \le i \le q_{k}$ 
\begin{equation*}
G_n \supset \left ( i\theta -\frac{1}{n^{\tau}}, \  i \theta +   \left\lfloor \frac{n-i}{q_k} \right \rfloor q_k \theta + \frac{1}{n^{\tau}} \right). 
\end{equation*}
For each $1 \le i \le q_{k}$, if  $(c-1) q_k + i \le n \le c q_k + i -1$, then 
$$ \left\lfloor \frac{n-i}{q_k} \right \rfloor \|q_k \theta\| + \frac{1}{n^{\tau}} \ge (c-1)\|q_k \theta\| + \frac{1}{(c q_k + i -1)^{\tau}}.
$$
Therefore, we have for each $1 \le i \le q_{k}$
\begin{equation*}\begin{split}
F_k &= \bigcap_{n = q_k}^{q_{k+1}} G_n 
\supset \bigcap_{ n = q_k}^{q_{k+1}} \left ( i\theta -\frac{1}{n^{\tau}}, \  i \theta +   \left\lfloor \frac{n-i}{q_k} \right \rfloor q_k \theta + \frac{1}{n^{\tau}} \right) \\
&\supset \left( i\theta - \frac{1}{q_{k+1}^\tau} , \ i\theta + \min_{1 \le c \le a_{k+1} +1} \left( (c-1) \|q_k \theta\| + \frac{1}{(c q_k + i -1)^{\tau}} \right) \right).
\end{split}\end{equation*}

(ii) 
By elementary calculus, 
\begin{equation*}
\inf_{x \ge 0}  \left( x \|q_k \theta\| + \frac{1}{(xq_k)^\tau} \right) = \left( \tau^{\frac{1}{\tau+1}} + \tau^{-\frac{\tau}{\tau+1}} \right ) \left( \frac{\|q_k \theta\|}{q_k } \right)^{\frac{\tau}{\tau+1}}.
\end{equation*}
Thus, we have 
\begin{multline*}
\min_{1 \le c \le a_{k+1} +1} \left( (c-1) \|q_k \theta\| + \frac{1}{( (c+1)  q_k )^{\tau}} \right)  \\
\ge \left( \tau^{\frac{1}{\tau+1}} + \tau^{-\frac{\tau}{\tau+1}} \right ) \left( \frac{\|q_k \theta\|}{q_k } \right)^{\frac{\tau}{\tau+1}} - 2 \| q_k \theta \|.
\end{multline*} 
Hence, by (i) and \eqref{qk1}, we have 
\begin{equation*}
F_k \supset \bigcup_{i = 1}^{q_k} \left ( i\theta - \| q_k \theta \|, \  i\theta + \left( \tau^{\frac{1}{\tau+1}} + \tau^{-\frac{\tau}{\tau+1}} \right ) \left( \frac{\|q_k \theta\|}{q_k } \right)^{\frac{\tau}{\tau+1}}- 2 \| q_k \theta \| \right).
\end{equation*}

(iii) 
Let $$c : = \left\lceil \left( \frac{\tau}{q_k^\tau\| q_k \theta \|} \right)^{\frac{1}{\tau+1}} \right\rceil.$$
We will distinguish two cases. 
If $c \le a_{k+1}$, then we have
\begin{equation*}
\begin{split}
F_k &= \bigcap_{n = q_k}^{q_{k+1}} G_n 
\subset G_{cq_k} = \bigcup_{i=1}^{c q_k} B ( i\theta, (cq_k)^{-\tau}) \\
&= \bigcup_{i = 1}^{q_k} B ( i\theta, (cq_k)^{-\tau}) \cup B ( (q_k + i) \theta, (cq_k)^{-\tau}) \cup \dots \cup B \Big( ( (c-1) q_k + i)\theta, (cq_k)^{-\tau}\Big).
\end{split}
\end{equation*}
Since 
$$ \left(\frac{1}{cq_k} \right)^{\tau} \ge \frac 1{cq_k} \ge \frac{1}{a_{k+1}q_k} \ge \frac 1{q_{k+1}} > \|q_k \theta\|,$$
the above $c$ intervals in the union overlap and we have
\begin{equation*}
\begin{split}
F_k &
\subset \bigcup_{i = 1}^{q_k} \left( i\theta - (cq_k)^{- \tau}, i\theta + (c-1) q_k \theta + (cq_k)^{- \tau} \right).
\end{split}
\end{equation*}
By the definition of $c$, we have
\begin{equation*}
\begin{split}
F_k 
&\subset \bigcup_{i = 1}^{q_k} \left( i\theta - \left( \frac{\tau q_k}{\| q_k \theta \|} \right)^{-\frac{\tau}{\tau+1}}, i\theta + \frac{1}{q_k} \left( \frac{\tau q_k}{\| q_k \theta \|} \right)^{\frac{1}{\tau +1}} \| q_k \theta \| + \left( \frac{\tau q_k}{\| q_k \theta \|} \right)^{-\frac{\tau}{\tau +1}} \right) \\
&= \bigcup_{i = 1}^{q_k} \left( i\theta - \tau^{-\frac{\tau}{\tau+1}} \left( \frac{\| q_k \theta \|}{q_k} \right)^{\frac{\tau}{\tau+1}}, i\theta + \left( \tau^{{\frac{1}{\tau+1}}} + \tau^{-\frac{\tau}{\tau+1}} \right ) \left( \frac{\| q_k \theta \|}{q_k} \right)^{\frac{\tau}{\tau+1}} \right).
\end{split}
\end{equation*}
Then the assertion follows.


If $c > a_{k+1}$, i.e., 
$$\left( \frac{\tau}{q_k^\tau \| q_k \theta \|} \right)^{\frac{1}{\tau+1}} > a_{k+1},$$
then we have
\begin{equation*}
\begin{split}
\left( \tau^{\frac{1}{\tau+1}} + 2 \tau^{-\frac{\tau}{\tau+1}} \right ) \left( \frac{\| q_k \theta \|}{q_k} \right)^{\frac{\tau}{\tau+1}} 
> \left( 1 + \frac{2}{\tau} \right) a_{k+1} \| q_k \theta \|  &\ge 3 a_{k+1} \| q_k \theta \| \\
&> \| q_{k-1}\theta \| + \|q_{k} \theta\|.
\end{split}
\end{equation*}
Thus,
$$
\bigcup_{i = 1}^{q_k} \left( i\theta -  \tau^{-\frac{\tau}{\tau+1}} \left( \frac{\| q_k \theta \|}{q_k} \right)^{\frac{\tau}{\tau+1}}, i\theta + \left( \tau^{\frac{1}{\tau+1}} + \tau^{-\frac{\tau}{\tau+1}} \right ) \left( \frac{\| q_k \theta \|}{q_k} \right)^{\frac{\tau}{\tau+1}} \right) = \mathbb T,
$$
and the assertion trivially holds.
\end{proof}

\begin{lemma}\label{fkb}
Suppose $\tau > 1$.

\noindent (i) We have
$$ \bigcup_{i=1}^{q_k} B \left( i\theta ,q_{k+1}^{-\tau} \right) \subset F_k \subset \bigcup_{i=1}^{q_{k+1}} B \left( i\theta ,q_{k+1}^{-\tau} \right)$$ 
and for large $q_k$ the balls $B \left( i\theta , q_{k+1}^{-\tau} \right)$, $1 \le i \le q_{k+1}$, are disjoint.

\noindent  (ii) If $q_{k+1}^{-\tau} + q_{k}^{-\tau} \le \| q_{k} \theta \|$,
then 
$$ F_k = \bigcup_{i=1}^{q_k} B \left( i\theta ,q_{k+1}^{-\tau} \right).$$ 

\noindent  (iii) For large $q_k$
$$
\bigcup_{i=1}^{\max(c_k, 1) \cdot q_k} B \left( i\theta ,q_{k+1}^{-\tau} \right) \subset F_k \subset \bigcup_{i=1}^{(2c_k+3)q_k} B \left( i\theta ,q_{k+1}^{-\tau} \right).
$$
where 
$$c_k := \left \lfloor \left( \frac{1}{ q_k^\tau \| q_{k} \theta \|} \right)^{\frac 1{\tau+1}} \right \rfloor.$$ 
\end{lemma}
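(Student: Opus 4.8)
My plan is to prove~(i) first and then deduce~(ii) and~(iii) from the resulting description of $F_k$; write $B_i:=B(i\theta,q_{k+1}^{-\tau})$. Two elementary observations do most of the work. \emph{Observation 1:} if $i\le n$ then $i\theta$ is a centre of $G_n$ and $n^{-\tau}\ge q_{k+1}^{-\tau}$, so $B_i\subset B(i\theta,n^{-\tau})\subset G_n$; letting $n$ range over $q_k<n\le q_{k+1}$ gives $\bigcup_{i=1}^{q_k}B_i\subset F_k$, while the reverse inclusion is just $F_k\subset G_{q_{k+1}}=\bigcup_{i=1}^{q_{k+1}}B_i$. \emph{Observation 2:} by the theorem of best approximation $\min_{1\le m<q_{k+1}}\|m\theta\|=\|q_k\theta\|$, so distinct points of $\{i\theta:1\le i\le q_{k+1}\}$ lie at distance $\ge\|q_k\theta\|$; with \eqref{qn-estimate} and $\tau>1$ this yields $2q_{k+1}^{-\tau}<\tfrac1{2q_{k+1}}<\|q_k\theta\|$ once $q_k$ is large, i.e.\ the disjointness in~(i). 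Hence $F_k$ lies in the disjoint union $\bigcup_{i=1}^{q_{k+1}}B_i$, and for~(ii) and~(iii) it suffices, on the relevant ranges of $i$, to decide whether $B_i\subset F_k$ (equivalently $B_i\subset G_n$ for all $n\in(q_k,q_{k+1}]$) or $B_i\cap F_k=\emptyset$ (equivalently $B_i\cap G_n=\emptyset$ for some such $n$).

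For~(ii): assume $q_{k+1}^{-\tau}+q_k^{-\tau}\le\|q_k\theta\|$ and take $i>q_k$. Use the level $n=i-1$, admissible once $i\ge q_k+2$: every centre $j\theta$ of $G_n$ has $1\le j\le n<q_{k+1}$, hence $j\ne i$ and $\|i\theta-j\theta\|\ge\|q_k\theta\|\ge q_{k+1}^{-\tau}+n^{-\tau}$, so $B_i$ is disjoint from every ball of $G_n$, hence from $F_k$. Together with Observation~1 this gives the asserted identity (the lone index $i=q_k+1$ requiring a direct look at the smallest admissible level).

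For~(iii), the substantial case: by the Three Step structure of $\{j\theta:1\le j\le q_k\}$, a point $i\theta$ with $q_k<i\le q_{k+1}$ lies in the unique gap whose left endpoint is $r\theta$, where $1\le r\le q_k$ and $r\equiv i\pmod{q_k}$, at depth $(b-1)\|q_k\theta\|$ with $b=1+(i-r)/q_k$; and the centres of $G_n$ lying in that gap are exactly the chain $r\theta,(r+q_k)\theta,\dots,(r+\lfloor(n-r)/q_k\rfloor q_k)\theta$, consecutive ones $\|q_k\theta\|$ apart. So $B_i\subset G_n$ for a level $n<i$ precisely when this chain of radius-$n^{-\tau}$ balls reaches depth $(b-1)\|q_k\theta\|$. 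When $c_k\ge1$ we have $\|q_k\theta\|q_k^\tau\le1$, so $i\le c_kq_k$ forces every admissible $n$ to satisfy $n<\|q_k\theta\|^{-1/\tau}$, i.e.\ $2n^{-\tau}>\|q_k\theta\|$, so the chain is connected; writing $n\approx sq_k$ and $\mu:=(\|q_k\theta\|q_k^\tau)^{-1}$, the reach condition reads $b\lesssim s+\mu s^{-\tau}$, and the minimum of $s\mapsto s+\mu s^{-\tau}$, attained near $s=(\tau\mu)^{1/(\tau+1)}$, equals $C_\tau\mu^{1/(\tau+1)}$ with $C_\tau=\tau^{1/(\tau+1)}+\tau^{-\tau/(\tau+1)}>1$ — the constant already met in Lemma~\ref{lem1}. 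Since $C_\tau>1$, all $i\le c_kq_k$ survive, giving the lower inclusion (the case $c_k=0$ being Observation~1). For the upper inclusion one argues in reverse: once $i$ exceeds a fixed multiple of $c_kq_k$ one produces an admissible level $n<i$ at which the chain does not reach $i\theta$ — if $n>\|q_k\theta\|^{-1/\tau}$ the connectedness fails and the $\ge\|q_k\theta\|$ separation pushes $B_i$ outside every ball of $G_n$, while for $n\le\|q_k\theta\|^{-1/\tau}$ the balancing level $s\approx(\tau\mu)^{1/(\tau+1)}$ yields a reach that is too short.

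The delicate part — and the main obstacle — is~(iii): one must carry the floor functions defining $b$, $r$ and the chain length, and, above all, check that the balancing level $n\approx(\tau\mu)^{1/(\tau+1)}q_k$ actually lies in the admissible window $(q_k,\min(i,q_{k+1})]$ and so determines the true range of surviving indices; this is where $\tau>1$ (so $q_k^\tau$ outgrows $\tau q_k$) and the estimate $q_k\le q_{k+1}^\tau$ are used. The remainder is bookkeeping around Observations~1 and~2.
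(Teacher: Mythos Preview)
Your approach matches the paper's: Observations~1 and~2 are exactly how the paper handles~(i); for~(ii) the paper uses the single level $n=q_k+1$ rather than your $n=i-1$, but the mechanism (best-approximation separation $\ge\|q_k\theta\|$ versus sum of radii) is identical; and for~(iii) your chain/optimization heuristic $b\lesssim s+\mu s^{-\tau}$ is precisely the continuous shadow of the paper's discrete inequalities $m\|q_k\theta\|\le((c_k-m+1)q_k)^{-\tau}-q_{k+1}^{-\tau}$ (lower inclusion) and $m\|q_k\theta\|>((c_k+1)q_k)^{-\tau}+q_{k+1}^{-\tau}$ with the test level $n=i-q_k$ (upper inclusion). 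The paper does carry out the floor-function bookkeeping you flag as ``the delicate part,'' so your~(iii) is a correct outline rather than a finished proof.

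One warning on~(ii): your parenthetical about $i=q_k+1$ does not close that case. At the smallest admissible level $n=q_k+1$ the point $(q_k+1)\theta$ is itself a centre of $G_n$, so $B_{q_k+1}\subset G_n$ and you cannot exclude it; indeed $(q_k+1)\theta\in F_k$ always, while under the hypothesis $\|q_k\theta\|\ge q_k^{-\tau}+q_{k+1}^{-\tau}>q_{k+1}^{-\tau}$ it lies outside $\bigcup_{i\le q_k}B_i$. The paper's own argument has the same slip (it writes ``there exists $1\le i\le q_k$'' after invoking $G_{q_k+1}$, silently dropping the possibility $i=q_k+1$). This is harmless for the applications in Section~\ref{Sec_pfthm1}, where only the containment $F_k\subset\bigcup_{i\le q_{k+1}}B_i$ and the intersection over $k$ matter, but you should not claim the displayed equality in~(ii) literally.
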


\begin{proof}
(i) For each $1\leq i \leq q_k$ and $q_k \le n\leq q_{k+1}$, 
\[B \left( i\theta ,q_{k+1}^{-\tau} \right) \subset \bigcup_{j=1}^n B\left(j\theta, n^{-\tau}\right) . \]
Thus, 
\[\bigcup_{i=1}^{q_k} B \left( i\theta ,q_{k+1}^{-\tau} \right) \subset  \bigcap_{n= q_k}^{q_{k+1}} \left( \bigcup_{j=1}^nB \left( j\theta, n^{-\tau} \right) \right) = F_k.\]

On the other hand,
\begin{equation}\label{ll1}
F_k = \bigcap_{n = q_k}^{q_{k+1}} G_n \subset G_{q_{k+1}} =  
\bigcup_{i=1}^{q_{k+1}} B \left( i\theta , q_{k+1}^{-\tau} \right).
\end{equation}
Since $\tau > 1$, for large $q_k$,  (hence for lager $q_{k+1}$), 
\begin{equation}\label{int-disj}
2q_{k+1}^{-\tau} < \frac{1}{2q_{k+1}} < \|q_k \theta \|.
\end{equation}
Thus, the balls $B \left( i\theta , q_{k+1}^{-\tau} \right)$, $1 \le i \le q_{k+1}$, are disjoint.

(ii) Suppose that there exists $x \in F_k \setminus \bigcup_{i=1}^{q_k} B \left( i\theta ,q_{k+1}^{-\tau} \right)$. 
By (i), we have $x \in B \left( j\theta ,q_{k+1}^{-\tau} \right)$ for some $q_k+1 \le j \le q_{k+1}$. 
Since $x \in  F_k  \subset G_{q_k} $,
there exists $1\le i \le q_k$ such that $x \in B\left( i\theta ,q_{k}^{-\tau} \right)$.
Since $|i\theta - j\theta| \ge \| q_k \theta \|$ and  $x \in B \left( j\theta ,q_{k+1}^{-\tau} \right) \cap  B\left( i\theta ,q_{k}^{-\tau} \right) \ne \emptyset$, we have
$\|q_k \theta\| <  {q_k}^{-\tau} + {q_{k+1}}^{-\tau}$, which is a contradiction. 

(iii)
Suppose $c_k\geq 2$. Then for $1 \le m \le c_k -1$, and for large $q_k$, 
\begin{equation}\label{lll}
\begin{split}
m \|q_k \theta \| &\le 
\left( \Big( \frac{1}{ q_k^\tau \|q_{k} \theta \|} \Big )^{\frac{1}{\tau+1}} - 1 \right )\|q_k \theta \| 
= \Big( \frac{ \|q_{k} \theta \|}{q_k} \Big)^{\frac{\tau}{\tau+1}} - \|q_k \theta \| \\  
&\le \frac{1}{(c_k q_k)^{\tau}} - \frac{1}{(q_{k+1})^{\tau}}
\le \frac{1}{((c_k - m+1)q_k)^{\tau}} - \frac{1}{(q_{k+1})^{\tau}},
\end{split}\end{equation}
where for the second inequality we use \eqref{int-disj}.

Let $i$ be an integer satisfying $q_k < i \le c_k q_k$. 
For each $n$ with $q_k \le n < i$, 
choose $m$ as $i - mq_k \le n < i - (m-1)q_k$.
Then
$1 \le m \le c_k -1$ and $n \le (c_k -m+1) q_k$. 
By \eqref{lll} we have
\begin{equation*}
B \left( i\theta ,q_{k+1}^{-\tau} \right) 
 \subset B \left( (i - mq_k )\theta , ((c_k-m+1)q_k)^{-\tau} \right) 
 \subset B \left( (i - mq_k )\theta , n^{-\tau} \right) \subset G_n.
\end{equation*}
We also have for $i \le n \le q_{k+1}$,
$$
B\left( i\theta ,q_{k+1}^{-\tau} \right) \subset G_n 
$$
Therefore, for $q_k < i \le c_k q_k$,
$$
B( i\theta ,q_{k+1}^{-\tau}) \subset  \bigcap_{n = q_k}^{q_{k+1}} G_n 
 = F_k.
$$
Hence, if $c_k \ge 2$, we have 
\begin{equation*}
\bigcup_{i=  q_k +1}^{c_k q_k} B \left( i\theta ,q_{k+1}^{-\tau} \right) \subset F_k.
\end{equation*}
On the other hand, we have already proved in (i) that 
\begin{equation*}
\bigcup_{i=1}^{q_k} B \left( i\theta ,q_{k+1}^{-\tau} \right) \subset F_k.
\end{equation*}
Therefore, the first inclusion for the case $c_k \le 1$ in (iii)  follows.


For 
large $q_k$,
\begin{equation*}\begin{split}
(c_k+2) \|q_k \theta \| &> 
\left( \Big( \frac{1}{q_k^\tau \|q_{k} \theta \| } \Big)^{\frac{1}{\tau+1}} + 1\right) \|q_k \theta \|
= \left( \frac{ \|q_{k} \theta \|}{q_k} \right)^{\tau \over \tau+1} + \|q_k \theta \| \\  
&> \frac{1}{( (c_k+1) q_k)^{\tau}} + \frac{1}{(q_{k+1})^{\tau}}.
\end{split}\end{equation*}
Suppose $(2c_k + 3)q_k < i \le q_{k+1}$. 
Then for any $j$ with $1 \le j \le (c_k+1) q_k $ we have 
$| i\theta - j \theta | \ge (c_k+2) \| q_k \theta \| $, thus
$$
B \left( i\theta ,q_{k+1}^{-\tau} \right) \cap B \left( j\theta , ((c_k+1)q_k)^{-\tau} \right) = \emptyset,
$$
which implies 
$$
B \left( i\theta ,q_{k+1}^{-\tau} \right) \cap G_{(c_k+1)q_k} = \emptyset.
$$
Hence,
$$
B \left( i\theta ,q_{k+1}^{-\tau} \right) \cap F_k = \emptyset.
$$
Therefore, by \eqref{ll1} we have
\begin{equation}\label{inc4}
F_k \subset \bigcup_{i=1}^{(2c_k+3)q_k} B \left( i\theta ,q_{k+1}^{-\tau} \right),
\end{equation}
which is the second inclusion in (iii).
\end{proof}

\medskip
\section{Proof of Theorem~\ref{main_theorem}}\label{Sec_pfthm1}

We will use the following known facts in fractal geometry to calculate the Hausdorff dimensions. 
Let $ E_0 \supset E_1 \supset E_2 \supset \dots $ be a decreasing sequence of sets, with each $E_n$ a union of finite number of disjoint intervals. Set 
$$ F = \bigcap_{n=0}^\infty E_n.$$

\begin{fact}[\cite{FALC}, p.64]\label{lower}
Suppose each interval of $E_{i-1}$ contains at least $m_i$ intervals of $E_i$ ($i=1,2,\dots$) which are separated by gaps of at least $\varepsilon_i$,
where $0 < \varepsilon_{i+1} < \varepsilon_i$ for each $i$. Then
$$ \Hdim (F) \ge \varliminf_{i \to \infty} \frac{\log (m_1 \cdots m_{i-1})}{- \log (m_i \varepsilon_i)}.$$
\end{fact}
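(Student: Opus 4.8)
The plan is to derive the lower bound from the Mass Distribution Principle (\cite{FALC}), the standard device for Cantor-type sets. \textbf{Constructing a measure.} Discarding intervals if necessary, I may assume that each interval of $E_{i-1}$ contains \emph{exactly} $m_i$ intervals of $E_i$. Then I build a Borel measure $\mu$ supported on $F$ by spreading mass uniformly down the tree: the total mass $1$ is split equally among the finitely many level-$0$ intervals, and if $I$ is an interval of $E_{i-1}$ then each of its $m_i$ children in $E_i$ receives mass $\mu(I)/m_i$. By construction every interval of $E_i$ carries $\mu$-mass $\asymp (m_1\cdots m_i)^{-1}$, with implied constant depending only on the number of level-$0$ intervals.

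\textbf{The ball estimate.} Fix $s$ with $0<s<\sigma:=\varliminf_{i\to\infty}\frac{\log(m_1\cdots m_{i-1})}{-\log(m_i\varepsilon_i)}$ (we may assume $s<1$). I claim $\mu(U)\le c\,|U|^s$ for every interval $U$ of small length, with $c$ independent of $U$; the Mass Distribution Principle then gives $\mathcal{H}^s(F)>0$. Given $U$, choose the index $i$ with $\varepsilon_{i+1}\le|U|<\varepsilon_i$. Any two distinct intervals of $E_i$ are separated by at least $\varepsilon_i>|U|$ (separations at level $i$ dominate those at all coarser levels), so $U$ meets at most one interval of $E_i$; hence the only intervals of $E_{i+1}$ that $U$ meets lie inside that single level-$i$ interval. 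Those $m_{i+1}$ intervals are pairwise $\varepsilon_{i+1}$-separated, so $U$ meets at most $\min\{m_{i+1},\,1+|U|/\varepsilon_{i+1}\}\le\min\{m_{i+1},\,2|U|/\varepsilon_{i+1}\}$ of them, whence
$$\mu(U)\ \le\ \frac{\min\{m_{i+1},\ 2|U|/\varepsilon_{i+1}\}}{m_1\cdots m_{i+1}}.$$

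\textbf{Two regimes and conclusion.} If $|U|\ge m_{i+1}\varepsilon_{i+1}$ the numerator is at most $m_{i+1}$, so $\mu(U)\le(m_1\cdots m_i)^{-1}$ while $|U|^s\ge(m_{i+1}\varepsilon_{i+1})^s$; taking logarithms and writing $j=i+1$, the desired $\mu(U)\le c|U|^s$ reduces (up to the harmless constant) to $s\le\frac{\log(m_1\cdots m_{j-1})}{-\log(m_j\varepsilon_j)}$, which holds for all large $j$ since $s<\sigma$. If instead $|U|<m_{i+1}\varepsilon_{i+1}$, then $\mu(U)\le 2|U|/(\varepsilon_{i+1}m_1\cdots m_{i+1})$, and using $1-s>0$ together with $|U|^{1-s}<(m_{i+1}\varepsilon_{i+1})^{1-s}$ the estimate collapses to the very same inequality. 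The finitely many large scales (those $U$ with $|U|\ge\varepsilon_{j_0}$) are absorbed into $c$ because $\mu(U)\le1$ always. Thus $\dim_H F\ge s$, and letting $s\uparrow\sigma$ yields the Fact.

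\textbf{Main difficulty.} The only delicate point is the bookkeeping that makes the exponent come out exactly as $\frac{\log(m_1\cdots m_{i-1})}{-\log(m_i\varepsilon_i)}$ and not something weaker: one must take the ``working level'' to be $i+1$ rather than $i$, so that a set of diameter in $[\varepsilon_{i+1},\varepsilon_i)$ is pinned to a single level-$i$ interval yet still resolves the splitting at level $i+1$, and one must keep both the combinatorial bound $m_{i+1}$ and the metric bound $|U|/\varepsilon_{i+1}$ available and use whichever is smaller. The index shift $i\mapsto i+1$ is precisely what converts $m_1\cdots m_i$ and $m_{i+1}\varepsilon_{i+1}$ into $m_1\cdots m_{j-1}$ and $m_j\varepsilon_j$. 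Everything else — well-definedness of $\mu$ and the routine application of the Mass Distribution Principle — is standard.
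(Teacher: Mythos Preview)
Your proof is correct and is precisely the standard Mass Distribution argument from Falconer's Example~4.6 that the paper is citing; the paper itself does not reprove this Fact but simply invokes it from \cite{FALC}, p.~64. Your construction of $\mu$, the choice of the scale $i$ via $\varepsilon_{i+1}\le|U|<\varepsilon_i$, and the two-regime bound combining the combinatorial count $m_{i+1}$ with the metric count $2|U|/\varepsilon_{i+1}$ are exactly Falconer's proof, so nothing more is needed.
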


\begin{fact}[\cite{FALC}, p.59]\label{upper}
Suppose $F$ can be covered by $\ell_i$ sets of diameter at most $\delta_i$ with $\delta_i \to 0$ as $i \to \infty$.
Then
$$ \Hdim (F) \le \varliminf_{i \to \infty} \frac{ \log \ell_i}{- \log \delta_i}.$$
\end{fact}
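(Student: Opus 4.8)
The plan is to derive the bound directly from the definition of Hausdorff measure, of which this is the standard ``covering'' half. Recall that for $s\ge 0$ and $\delta>0$ one sets $\mathcal H^s_\delta(F)=\inf\{\sum_j |U_j|^s : F\subset\bigcup_j U_j,\ |U_j|\le\delta\}$, where $|\cdot|$ denotes diameter; that $\mathcal H^s(F)=\lim_{\delta\to0^+}\mathcal H^s_\delta(F)$, the limit existing in $[0,\infty]$ because $\mathcal H^s_\delta$ is non-increasing in $\delta$; and that $\mathcal H^s(F)<\infty$ forces $\dim_H(F)\le s$ (see \cite{FALC}).

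First I would set $s_0:=\varliminf_{i\to\infty}\frac{\log\ell_i}{-\log\delta_i}$; if $s_0=\infty$ there is nothing to prove, so assume $s_0<\infty$, and note that since $\delta_i\to0$ we have $-\log\delta_i>0$ for all large $i$. Fix an arbitrary $s>s_0$. By the definition of $\varliminf$ there is a strictly increasing sequence $i_1<i_2<\cdots$ with $\frac{\log\ell_{i_j}}{-\log\delta_{i_j}}<s$ for every $j$, equivalently $\log\ell_{i_j}<-s\log\delta_{i_j}=\log\delta_{i_j}^{-s}$, which rearranges to $\ell_{i_j}\,\delta_{i_j}^{\,s}<1$.

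Next, for each $j$ I would take the cover of $F$ provided by the hypothesis: a family of at most $\ell_{i_j}$ sets, each of diameter at most $\delta_{i_j}$. Each such set contributes at most $\delta_{i_j}^{\,s}$ to the defining sum $\sum_j|U_j|^s$, so
\[
\mathcal H^s_{\delta_{i_j}}(F)\ \le\ \ell_{i_j}\,\delta_{i_j}^{\,s}\ <\ 1 .
\]
Since $\delta_{i_j}\to0$ as $j\to\infty$, letting $j\to\infty$ yields $\mathcal H^s(F)\le 1<\infty$, hence $\dim_H(F)\le s$. Finally, letting $s\downarrow s_0$ gives $\dim_H(F)\le s_0$, as claimed. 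There is no real obstacle here: the only mildly delicate point is correctly extracting from the $\varliminf$ a subsequence along which $\ell_i\,\delta_i^{\,s}<1$, after which the estimate on $\mathcal H^s_{\delta_{i_j}}$ is immediate from the definition; accordingly the statement is simply quoted from \cite[p.~59]{FALC} rather than reproved in the paper.
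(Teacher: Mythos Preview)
Your argument is correct and is exactly the standard derivation from the definition of Hausdorff measure; as you yourself note, the paper does not reprove this Fact but simply quotes it from \cite[p.~59]{FALC}, so there is no paper proof to compare against.
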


Now we are ready to prove Theorem~\ref{main_theorem}. Recall that 
\[F_k= \bigcap_{n =  q_k}^{q_{k+1}} \left( \bigcup_{i=1}^n B \left( i\theta , n^{-\tau} \right) \right).\]
By the discussion at the beginning of Section \ref{Dist}, we need to calculate the Hausdorff dimension of the set 
\[F= \bigcap_{n=1}^{\infty} E_n,  \ \ \text{with } \ E_n= \bigcap_{k=1}^n F_k. \]
The dimension of $F$ is the same to that of $\UU_{\tau}[\theta]$.


\begin{proof}[Proof of Theorem~\ref{main_theorem}]

(i)
If $\tau < 1/ w(\theta)$, by \eqref{def-w} we have for all large $k$, 
\[  2 \left( \frac 1{q_{k+1}} \right)^{\tau} > \frac 2{q_k} > \frac 1{q_k} + \frac 1{q_{k+1}} \geq \| q_{k-1} \theta \| + \| q_{k} \theta \|.\]
Thus by Lemma \ref{fkt} (i), for all large $k$, $F_k$ is the whole circle $\mathbb{T}$. Hence, \[\UU_{\tau}[\theta]=\bigcup_{\ell=1}^{\infty} \bigcap_{k=\ell}^\infty F_k= \mathbb{T}.\] 

(ii) If $\tau > w(\theta)$, then we have
$q_{k}^{\tau} \| q_k \theta \| > 2 $ for all large $k$, thus 
\begin{equation}\label{qkt}
q_{k+1}^{-\tau} + q_{k}^{-\tau} <  2 q_{k}^{-\tau} \le \| q_{k} \theta \|  \quad \text{ for large $k$}.
\end{equation}
By Lemma~\ref{fkb} (ii), for large $k$
$$ F_k = \bigcup_{i=1}^{q_k} B \left( i\theta ,q_{k+1}^{-\tau} \right).$$ 
Thus, 
$$F_k \cap F_{k+1}  = \left( \bigcup_{i=1}^{q_k} B \left( i\theta ,q_{k+1}^{-\tau} \right) \right) \cap  \left( \bigcup_{j=1}^{q_{k+1}} B \left( i\theta ,q_{k+2}^{-\tau} \right) \right).$$ 
By \eqref{qkt}, for $1 \le i \ne j \le q_{k+1}$  we have $| i\theta - j\theta | \ge \| q_{k+1} \theta \| > q_{k+2}^{-\tau} + q_{k+1}^{-\tau}$,
thus $$F_k \cap F_{k+1}  = \bigcup_{j=1}^{q_{k}} B \left( i\theta ,q_{k+2}^{-\tau} \right).$$ 
Inductively, for each $\ell \ge 0$ we get
\begin{equation}\label{bigtau}
F_k \cap F_{k+1} \cap \dots \cap F_{k+\ell} = \bigcup_{i=1}^{q_k} B \left( i\theta ,q_{k+\ell+1}^{-\tau} \right).
\end{equation}
Hence, we conclude
$$\UU_{\tau}[\theta] =\bigcup_{\ell=1}^{\infty} \bigcap_{k=\ell}^\infty F_k= \{ i\theta : i \ge 1\}.$$

(iii) Assume that $1/w(\theta) < \tau < 1$. 
If $q_k \| q_k \theta \|^\tau \ge 1$ then we have
\begin{equation*}
\begin{split}
2 \left( \frac 1{q_{k+1}} \right)^{\tau} &> 2 \| q_{k} \theta \|^{\tau} 
 > \| q_{k} \theta \|^{\tau} + \| q_{k} \theta \| \\ 
&\ge \frac 1{q_{k}} + \| q_{k} \theta \|
> \| q_{k-1} \theta \| + \| q_{k} \theta \|.
\end{split}
\end{equation*}
By Lemma~\ref{fkt} (i), we have $F_k = \mathbb T$.
Since removing such sets $F_k$ from the intersection $F= \bigcap_{k=1}^\infty F_k$ does not change  $F$,
we only consider $F_k$ such that $q_k \| q_k \theta \|^{\tau} < 1 $.

Suppose for some $k$
\begin{equation}\label{beta} q_k \| q_k \theta \|^\tau < 1. \end{equation}
Then
\begin{equation}\label{newubound}
\frac{1}{4} \Big( \frac{\| q_k \theta \| }{q_k} \Big)^{\frac{\tau}{\tau+1}} < \frac{1}{4q_k} < \frac 12 \| q_{k-1} \theta \|.
\end{equation}
For $1\leq i \leq q_k$, put
$$
\tilde F_k (i) := \left( i\theta - \| q_k \theta\| , \  i\theta + \frac14 \left( \frac{\| q_k \theta \| }{q_k} \right)^{\frac{\tau}{\tau+1}} - \| q_k \theta\| \right). 
$$
By \eqref{beta},  
for any constant $c > 0$ for large $k$
\begin{equation}\label{esti}
c \left( \frac{ \|q_k\theta\| }{ q_k}\right)^{\frac{\tau}{\tau +1}} > c \|q_k\theta\|^\tau > \|q_k\theta\|.
\end{equation}
Since $C_\tau > 1$, by \eqref{esti} and Lemma~\ref{lem1} (ii)
\begin{equation}\label{pfthm1}
\tilde F_k := \bigcup_{i=1}^{q_k} \tilde F_k (i) \subset F_k.
\end{equation}
By (\ref{newubound}), the intervals in $\tilde F_k (i)$'s are disjoint and distanced by more than $\frac 12 \|q_{k-1}\theta\|$.

We estimate the number of subintervals of $\tilde F_{k+\ell}$ in each $\tilde F_k(i)$ by the Denjoy-Koksma inequality (see, e.g., \cite{Herman}): let $T$ be an irrational rotation by $\theta$ and $f$ be a real valued function of bounded variation on the unit interval. 
Denote by ${\rm var} (f)$ the total variation of $f$ on the unit interval.
Then for any $x$
\begin{equation}\label{Koksma}
\left| \sum_{n=0}^{q_k -1} f(T^n x) - q_k \int f \, {\mathrm d}  x \right | \leq \text{\rm var} (f). \end{equation}
For a given interval $I$, by the Denjoy-Koksma inequality \eqref{Koksma}, we have
\begin{equation*}
\# \left\{ 1 \le n \le q_{k} : n\theta \in I \right\} = \sum_{n=0}^{q_{k}-1} 1_{I} (T^n x) \geq q_{k} |I| - 2.
\end{equation*}
Since $\tilde F_{k+\ell}$ consists of the disjoint intervals at $q_{k+\ell}$ orbital points,
we have for each $1\leq i \leq q_k$
$$\# \left\{ 1 \le n \le q_{k+\ell} : \tilde F_{k+\ell} (n) \cap \tilde F_k(i) \ne \emptyset \right\} 
\ge q_{k+\ell} \cdot  \frac14 \left( \frac{\| q_k \theta \| }{q_k} \right)^{\frac{\tau}{\tau+1}} - 2 $$
and
\begin{equation*}\label{dkineq}
\# \left\{ 1 \le n \le q_{k+\ell} : \tilde F_{k+\ell} (n) \subset \tilde F_k(i) \right\} 
\ge \frac{q_{k+\ell}}4 \left( \frac{\| q_k \theta \| }{q_k} \right)^{\frac{\tau}{\tau+1}} - 4. 
\end{equation*}
By applying \eqref{esti}, we deduce 
\begin{align*}
\# \left\{ 1 \le n \le q_{k+\ell} : \tilde F_{k+\ell} (n) \subset \tilde F_k(i) \right\} \ge \frac{q_{k+\ell}}{5} \left( \frac{\| q_k \theta \| }{q_k} \right)^{\frac{\tau}{\tau+1}} . 
\end{align*}

Let $\{n_i \}$ be the sequence of all integers satisfying 
\begin{equation*} n_i \| n_i \theta \|^\tau < 1. \end{equation*}
We remark that since $1/w(\theta)<\tau$, by the definition of $w(\theta)$, there are infinitely many such $n_i$'s. 
Further, by the Legendre's theorem (\cite{Leg}, pp. 27--29), we have $n_i = q_{k_i}$ for some $k_i$. 


Since $F_k=\mathbb{T}$ if $k \neq k_i$, the Cantor set $F$ is 
\[
  F= \bigcap_{k=1}^\infty F_k = \bigcap_{i=1}^\infty F_{k_i}.
\]
Now we will apply Fact \ref{lower}. Let 
\[
\tilde E_i:=\bigcap_{j=1}^i \tilde F_{k_j} \subset \bigcap_{j=1}^i F_{k_j}.
\]
Then $\bigcap_{i=1}^\infty \tilde E_{i} \subset F.$
Keeping the notations $m_i, \varepsilon_i$ as in Fact \ref{lower},
we have for $i$ large enough, 
\begin{equation}\label{lower-numbers-gaps}
m_i \ge \frac{q_{k_i}}{5} \left( \frac{\| q_{k_{i-1}} \theta \| }{q_{k_{i-1}}} \right)^{\frac{\tau}{\tau+1}}, \quad \varepsilon_i \ge \frac 12 \| q_{k_i -1}\theta \|.
\end{equation}
Since the lower limit will not be changed if we modify finite number of $m_i$ and $\varepsilon_i$'s, we can suppose that the estimates (\ref{lower-numbers-gaps}) hold for all $i$.
Hence, by Fact \ref{lower}
\begin{equation*}
\begin{split}
\Hdim (F) &\ge \varliminf_i \frac{ \log (m_1 \cdots m_{i}) }{- \log ( m_{i+1} \varepsilon_{i+1}) } \\
&\ge \varliminf_i \frac{ \frac{\tau}{\tau+1} \log (\frac{\| n_1 \theta \| \cdots \| n_{i-1} \theta \|}{n_1 \cdots n_{i-1} }) + \log (  n_1 \cdots n_{i} ) -i\log 5}{\frac{\tau}{\tau+1} \log ( n_i / \| n_{i} \theta \|  ) } \\
&= \varliminf_i \frac{  \log ( \| n_1 \theta \| \cdots \| n_{i-1} \theta \| ) + {1 \over \tau} \log ( n_1 \cdots n_{i-1} ) + (1+{1 \over \tau}) \log  n_{i}}{  \log ( n_i / \| n_{i} \theta \| ) }.
\end{split}
\end{equation*}
The last equality follows from the fact that $n_k$ increases super-exponentially when $w(\theta) > 1$.

For the the upper bound of $\Hdim(F)$, 
by Lemma~\ref{lem1} (iii), we have
\begin{equation*}
F_k \subset \bigcup_{i = 1}^{q_k} \left( i\theta - C_\tau \Big( \frac{\| q_k \theta \| }{q_k} \Big)^{\frac{\tau}{\tau+1}}, \  i\theta + C_\tau \Big( \frac{\| q_k \theta \| }{q_k} \Big)^{\frac{\tau}{\tau+1}} \right) := \bar F_k.
\end{equation*}
By the Denjoy-Koksma inequality \eqref{Koksma}, the number of subintervals of $\bar F_{k_i}$ contained in each interval of $\bar F_{k_{i-1}}$ is at most 
$$ 2C_\tau q_{k_i} \Big( \frac{\| q_{k_{i-1}} \theta \|}{q_{k_{i-1}}} \Big)^{\frac{\tau}{\tau+1}} + 4.$$
Therefore, $\bar E_i:= \bigcap_{j=1}^i \bar F_{k_j}$ can be covered by $\ell_i$ sets of diameter at most $\delta_i$, with
\begin{equation}\label{elldelta}
\begin{split}
\ell_i &\le q_{k_1} \left( 2 C_\tau q_{k_2} \Big( \frac{\| q_{k_1} \theta \|}{q_{k_1}} \Big)^{\frac{\tau}{\tau+1}} + 4 \right)  \cdots  \left( 2C_\tau q_{k_i} \Big( \frac{ \| q_{k_{i-1}} \theta \|}{q_{k_{i-1}}} \Big)^{\frac{\tau}{\tau+1}} + 4 \right), \\
\delta_i &\le 2C_\tau \left( \frac{ \| q_{k_i} \theta \|}{q_{k_i}} \right)^{\frac{\tau}{\tau+1}}.
\end{split}
\end{equation}
By (\ref{esti})
$$2C_\tau q_{k_i} \Big( \frac{ \| q_{k_{i-1}} \theta \|}{q_{k_{i-1}}} \Big)^{\frac{\tau}{\tau+1}} >2C_\tau q_{k_i} \| q_{k_{i-1}} \theta \|^\tau > 2C_\tau q_{k_i} \| q_{k_{i-1}} \theta \| > C_\tau > 1.$$ 
Thus by the fact that $x+ 4 \le 5x$ for $x \ge 1$, we have
\begin{equation*}
\ell_i \le (10C_\tau)^{{i-1}} \left (  n_{1} \cdots  n_{i-1} \right)^{1 \over \tau+1} n_i \left( \| n_{1} \theta \|  \cdots \| n_{i-1} \theta \| \right )^{\tau \over \tau+1}, \ \delta_i \le 2C_\tau \left ( \frac{\| n_i \theta \|}{ n_i } \right )^{\tau \over \tau+1}.
\end{equation*}
Hence, by Fact~\ref{upper}, we have
\begin{equation*}
\begin{split}
\Hdim(F) &\le \varliminf_i \frac{\log \ell_i}{- \log \delta_i} \\
&\le \varliminf_i \frac{ \log ( \| n_1 \theta \| \cdots \| n_{i-1} \theta \| ) + {1 \over \tau} \log ( n_1 \cdots n_{i-1}  ) + (1+{1 \over \tau}) \log  n_{i} }{\log ( n_i / \| n_{i} \theta \| ) }.
\end{split}
\end{equation*}

(iv) Suppose $1 < \tau < w(\theta)$.
Let $\{n_i \}$ be the sequence of all integers satisfying
$$ n_i^{\tau} \| n_i \theta \| < 2. $$
Remark that by the definition of $w(\theta)$, there are infinitely many such $n_i$'s. 
Applying the Legendre's theorem (\cite{Leg}, pp. 27--29), we have $n_i = q_{k_i}$ for some $k_i$. 

If $k\neq k_i$, then $q_{k+1}^{-\tau} + q_{k}^{-\tau} \le 2 q_{k}^{-\tau} \le \| q_{k} \theta \|$.
Thus by Lemma~\ref{fkb} (ii) 
$$ F_k = \bigcup_{i=1}^{q_k} B \left( i\theta ,q_{k+1}^{-\tau} \right).$$ 
Therefore, by \eqref{bigtau}
\begin{equation}\label{bigtau2}
\bigcap_{\ell=k_{i}+1}^{k_{i+1}-1} F_{\ell} = \bigcup_{j=1}^{q_{k_{i}+1}} B \left( j\theta ,q_{k_{i+1}}^{-\tau} \right).
\end{equation}
Also since $| i\theta - j\theta | \ge \| q_{k_i} \theta \| > q_{k_i+1}^{-\tau}$ for $1 \le i \ne j \le q_{k_i+1}$, we deduce that
\begin{align*}
\bigcup_{j=1}^{\max(c_{k_i},1) q_{k_i}} B \left( j\theta ,q_{k_{i+1}}^{-\tau} \right)  
&= \left( \bigcup_{j=1}^{\max(c_{k_i},1) q_{k_i}} B \left( j\theta ,q_{k_{i}+1}^{-\tau} \right) \right)
\cap \left( \bigcup_{j=1}^{q_{k_{i}+1}} B \left( j\theta ,q_{k_{i+1}}^{-\tau} \right) \right).
\end{align*}
Thus, by Lemma~\ref{fkb} (iii) and \eqref{bigtau2}
\begin{equation*}
\bigcup_{j=1}^{\max(c_{k_i},1) q_{k_i}} B \left( j\theta ,q_{k_{i+1}}^{-\tau} \right)  
\subset F_{k_i} \cap \left( \bigcap_{\ell=k_{i}+1}^{k_{i+1}-1} F_{\ell} \right) = \bigcap_{\ell=k_{i}}^{k_{i+1}-1} F_{\ell}. 
\end{equation*}
Take 
$$
\tilde F_i := \bigcup_{j=1}^{\max(c_{k_i},1) q_{k_i}} B \left( j\theta ,q_{k_{i+1}}^{-\tau} \right)  \qquad \text{ and } \qquad
\tilde E_i := \bigcap_{j=1}^i  \tilde F_j.$$
Then
\[
 \bigcap_{i=1}^\infty \tilde E_{i} \subset F.
\]

By the definition of $c_{k}$, if $c_{k_i} \ge 1$, then $q_{k_i}^\tau  \| q_{k_i} \theta \| \le 1$. Using $\tau >1$,  we have  for large $k$
\begin{align*}
\left( c_{k_i} - 1 \right) \| q_{k_i} \theta \| + \frac{1}{q_{k_{i+1}}^{\tau}} 
&\le \left( \frac{1}{q_{k_i}^\tau  \| q_{k_i} \theta \|} \right )^{\frac{1}{\tau+1}} \| q_{k_i} \theta \|   - \| q_{k_i} \theta \| + \frac{1}{q_{k_{i+1}}^{\tau}}  \\
&< \frac{1}{q_{k_i}^\tau  \| q_{k_i} \theta \|} \| q_{k_i} \theta \|  - \frac{1}{2q_{k_i+1} } + \frac{1}{q_{k_{i+1}}^{\tau}} < \frac{1}{q_{k_{i}}^{\tau}}.
\end{align*}
Therefore, for each $1 \le j \le q_{k_i}$
\begin{equation*}
B \left( j\theta ,q_{k_{i}}^{-\tau} \right) \cap \tilde F_i  = \bigcup_{h = 0}^{\max(c_{k_i},1) -1}  B \left( (h q_{k_i} + j )\theta ,q_{k_{i+1}}^{-\tau} \right). 
\end{equation*}
The number of intervals of $\tilde F_i$ in each interval $B \left( j\theta ,q_{k_{i}}^{-\tau} \right)$ of $\tilde F_{i-1}$ is 
\begin{equation}\label{eq22}
m_i = \max( c_{k_i},1) = \max \left( \left \lfloor \left( \frac{1}{q_{k_i}^\tau  \| q_{k_i} \theta \| } \right )^{\frac{1}{\tau+1}} \right \rfloor
, 1 \right)
\end{equation}
and the gaps between intervals in $\tilde F_i$ is at least 
$$
\epsilon_i \geq \| q_{k_i} \theta \| - \frac{2}{(q_{k_{i+1}})^{\tau}}.
$$
Since $\max(\lfloor x \rfloor ,1) \geq \frac x2$ for any real $x \geq 0$, we have
$$
m_i \ge \frac 12 \left( \frac{1}{q_{k_i}^\tau  \| q_{k_i} \theta \| } \right )^{1 \over \tau+1}.
$$
For large $i$, from $\tau > 1$, we deduce
$$
\epsilon_i \geq \| q_{k_i} \theta \| - \frac{2}{(q_{k_{i+1}})^{\tau}} \ge \frac{\| q_{k_i} \theta \|}{2}.
$$
Therefore, by Fact \ref{lower}
\begin{equation*}
\begin{split}
\Hdim (F) &\ge \varliminf_i \frac{ \log (m_1 \cdots m_{i-1}) }{- \log ( m_i \varepsilon_i) } \\
&\ge \varliminf_k \frac{ -\frac{\tau}{\tau+1} \log ( n_1\| n_1 \theta \|^{1/\tau} n_2\| n_2 \theta \|^{1/\tau} \cdots n_{i-1}\| n_{i-1} \theta \|^{1/\tau} ) - (i-1) \log 2 }
 {\frac{\tau}{\tau+1} \log ( n_i /  \| n_{i} \theta \|  ) + \log 4 } \\
&=  \varliminf_k \frac{ -\log ( n_1\| n_1 \theta \|^{1/\tau} n_2\| n_2 \theta \|^{1/\tau} \cdots n_{i-1}\| n_{i-1} \theta \|^{1/\tau} ) }{\log ( n_i / \| n_{i} \theta \|  ) } .
\end{split}
\end{equation*}

For the upper bound, by \eqref{bigtau2} and Lemma~\ref{fkb} (i), (iii),
$$
\bar F_i :=  \bigcup_{j=1}^{\min( (2c_{k_i}+3) q_{k_i}, q_{k_i+1}) } B \left( j\theta ,q_{k_{i+1}}^{-\tau} \right)
\supset \bigcap_{\ell=k_{i}}^{k_{i+1}-1} F_{\ell}.
$$
Then 
\[
F\subset \bigcap_{i=1}^\infty \bar F_i.
\]
By a similar calculation of \eqref{eq22}, we deduce that each $\bar E_i:= \bigcap_{j=1}^i \bar F_{j}$ can be covered by $\ell_i$ sets of diameter at most $\delta_i$, with
\begin{align*}
 \ell_i &\le (2c_{k_1}+3) \cdots (2c_{k_{i-1}}+3) \\
 &\le \left ( 2 \left ( \frac{1}{n_{1}^\tau \| n_{1} \theta \|} \right )^{1 \over \tau+1} + 5 \right) \cdots \left ( 2 \left ( \frac{1}{n_{i-1}^\tau \| n_{i-1} \theta \| } \right )^{1 \over \tau+1} + 5 \right) , \\
\delta_i &\le (2c_{k_i}+3) \|q_{k_i}\theta \|  + {2 \over q_{k_{i+1}}^\tau}  \le \left( 2 \left ( \frac{1}{ n_i^\tau \| n_i \theta \|} \right )^{1 \over \tau+1} + 5 \right) \cdot \| n_i \theta \| .
\end{align*}
Note that
\begin{equation*}\label{eq23}
( q_{k_i}^\tau  \| q_{k_i} \theta \|)^{-1/(\tau+1)}  > 2^{-1/(\tau+1)} > 2^{-1/2},
\end{equation*}
and  $2x+5 < 10x$ for $x > 2^{-1/2}$. 
Then we have
\begin{equation*}
\ell_i \le 10^{{i-1}} \left ( \frac{1}{n_{1}^\tau \| n_{1} \theta \| } \cdots  \frac{1}{n_{i-1}^\tau \| n_{i-1} \theta \|} \right )^{1 \over \tau+1}, \qquad
\delta_i \le 10 \left ( \frac{\| n_i \theta \|}{ n_i } \right )^{\tau \over \tau+1}.
\end{equation*}
Thus by Fact~\ref{upper}, 
\begin{equation*}
\begin{split}
\Hdim(F) &\le \varliminf_i \frac{\log \ell_i}{- \log \delta_i} \\
&\le  \varliminf_i \frac{ -\log ( n_1\| n_1 \theta \|^{1/\tau} n_2\| n_2 \theta \|^{1/\tau} \cdots n_{i-1}\| n_{i-1} \theta \|^{1/\tau} ) + (i-1) \log 10}
 {\log ( n_i / \| n_{i} \theta \| ) - \log 10 } \\
&=  \varliminf_i  \frac{- \log ( n_1\| n_1 \theta \|^{1/\tau} n_2\| n_2 \theta \|^{1/\tau} \cdots n_{i-1}\| n_{i-1} \theta \|^{1/\tau} ) }{\log ( n_i / \| n_{i} \theta \|) } .
\end{split}
\end{equation*}
The last equality is from the super-exponentially increasing of $n_k$ when $w(\theta) >1$.

\end{proof}

\section{The case of $\tau = 1$ and proof of Theorem~\ref{bounds_beta_1}}\label{sec_beta1}

For the case of $\tau = 1$, we need more accurate estimation on the size of intervals of $F_k$.
We first prove the following two lemmas which describe the subintervals contained in $F_k$.

\begin{lemma}\label{ak4}
If $\frac 1{(b+1)(b+2)} \le q_k \| q_k \theta \| < \frac 1{b(b+1)}$,  for some $b \ge 1$, then 
\begin{equation*}
\bigcup_{1 \le i \le q_k} 
\left( i\theta - \frac{1}{q_{k+1}} , \  i\theta + (b-1)q_k \theta +  \frac{1}{(b+1)q_k} \right)
\subset F_k .
\end{equation*}
\end{lemma}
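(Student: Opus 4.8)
The plan is to deduce Lemma~\ref{ak4} directly from Lemma~\ref{lem1}(i) specialized to $\tau=1$, the point being to replace the $i$-dependent right endpoint $r_k(i)$ by a uniform lower bound coming from a one-variable discrete minimization. As in the preceding lemmas I take $k$ even, so that $q_k\theta=\|q_k\theta\|$; the odd case is symmetric.

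First I would quote Lemma~\ref{lem1}(i): with $\tau=1$ it asserts $F_k\supset\bigcup_{i=1}^{q_k}\left(i\theta-q_{k+1}^{-1},\ i\theta+r_k(i)\right)$ --- so the left endpoint already agrees with the one in Lemma~\ref{ak4} --- together with
\[
r_k(i)\ \ge\ \min_{1\le c\le a_{k+1}+1}\left((c-1)\|q_k\theta\|+\frac{1}{cq_k+i-1}\right).
\]
Since $1\le i\le q_k$ gives $cq_k+i-1<(c+1)q_k$, each summand exceeds $(c-1)\|q_k\theta\|+\frac{1}{(c+1)q_k}$, and enlarging the range of the minimum to all $c\ge1$ only decreases it. Writing $\alpha:=q_k\|q_k\theta\|$ and $h(c):=(c-1)\alpha+\frac{1}{c+1}$ for integers $c\ge1$, this chain yields $r_k(i)\ge\frac1{q_k}\min_{c\ge1}h(c)$. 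Because $(b-1)q_k\theta+\frac{1}{(b+1)q_k}=\frac1{q_k}h(b)$, it suffices to show $\min_{c\ge1}h(c)=h(b)$; then each interval $\left(i\theta-q_{k+1}^{-1},\ i\theta+(b-1)q_k\theta+\frac{1}{(b+1)q_k}\right)$ lies inside $\left(i\theta-q_{k+1}^{-1},\ i\theta+r_k(i)\right)\subset F_k$, and taking the union over $i$ is the conclusion.

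It remains to locate the minimum of the integer function $h$. I would compute the consecutive difference $h(c)-h(c-1)=\alpha-\frac1{c(c+1)}$ for $c\ge2$. The hypothesis $\frac1{(b+1)(b+2)}\le\alpha<\frac1{b(b+1)}$ reads $b(b+1)<\alpha^{-1}\le(b+1)(b+2)$. Hence for $2\le c\le b$ we have $c(c+1)\le b(b+1)<\alpha^{-1}$, so $h(c)-h(c-1)<0$, whereas for $c\ge b+1$ we have $c(c+1)\ge(b+1)(b+2)\ge\alpha^{-1}$, so $h(c)-h(c-1)\ge0$. Thus $h$ strictly decreases on $\{1,\dots,b\}$ and is non-decreasing on $\{b,b+1,\dots\}$, whence $\min_{c\ge1}h(c)=h(b)=(b-1)\alpha+\frac1{b+1}$, which completes the argument.

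I do not expect a genuine obstacle here: everything is elementary, and the slightly unusual constants $\frac1{(b+1)(b+2)}$ and $\frac1{b(b+1)}$ in the hypothesis are nothing but the thresholds at which $h(c)-h(c-1)$ changes sign at $c=b+1$ and $c=b$. Two small points deserve to be stated explicitly: one need not assume $b\le a_{k+1}+1$, because passing from $\min_{1\le c\le a_{k+1}+1}$ to $\min_{c\ge1}$ can only lower the bound, so the restricted minimum in Lemma~\ref{lem1}(i) is still at least $h(b)$; and the reduction from $\frac1{cq_k+i-1}$ to $\frac1{(c+1)q_k}$, valid precisely because $1\le i\le q_k$, is the only place the index $i$ enters, after which the bound is uniform in $i$ and the union over $i$ is immediate.
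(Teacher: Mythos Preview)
Your proof is correct and follows essentially the same route as the paper: reduce to Lemma~\ref{lem1}(i) with $\tau=1$, replace $\frac{1}{cq_k+i-1}$ by the uniform $\frac{1}{(c+1)q_k}$, and show that the resulting minimum over $c\ge1$ of $(c-1)\|q_k\theta\|+\frac{1}{(c+1)q_k}$ is attained at $c=b$. The only cosmetic difference is in this last step: you verify the minimum by computing consecutive differences $h(c)-h(c-1)=\alpha-\frac{1}{c(c+1)}$ and reading off where the sign changes, whereas the paper writes the single factorization $(b-c)\big(\|q_k\theta\|-\frac{1}{(b+1)(c+1)q_k}\big)\le0$ directly; both arguments are equivalent and equally short.
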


\begin{proof}
Since $\frac 1{(b+1)(b+2)q_k} \le \| q_k \theta \| < \frac 1{b(b+1)q_k}$, for any integer $c \ge 1$
\begin{equation*}
(b - c) \|q_k \theta\| +  \frac{1}{(b+1)q_k} - \frac{1}{(c+1)q_k} =  (b-c) \left( \|q_k \theta\| - \frac{1}{(b+1)(c+1)q_k} \right) \le 0.
\end{equation*}
Therefore, for all $c \ge 1$ and $1 \le i \le q_k$
\begin{equation*}
(b-1)\|q_k \theta\| +  \frac{1}{(b+1)q_k} \le (c-1) \|q_k \theta \| + \frac{1}{(c+1)q_k} \le (c-1) \|q_k \theta \| + \frac{1}{cq_k +i}.
\end{equation*}
Applying Lemma~\ref{lem1} (i), we complete the proof.
\end{proof}

For each $k \ge 0$, denote
$$ r_{k+1} := \begin{dcases} \left\lfloor \sqrt{ 4 a_{k+1} +5 } \right\rfloor -3 , & a_{k+1} \ne 2, \\
1,  & a_{k+1} = 2.\end{dcases} $$
We remark that $0 \le r_{k+1} < a_{k+1}$ 
and the first values of $r_{k+1}$ are
$$ r_{k+1} = \begin{dcases} 0, & a_{k+1} =1, \\
1,  & a_{k+1} = 2, 3, 4,\\
2, & a_{k+1} = 5, 6, 7.\end{dcases} $$

Define inductively
$$ \tilde r_{k+1} :=  \begin{cases} 
r_{k+1} + 1  = 2, & \text{ if } a_{k+1} = 4 \text{ and } a_{k+2} \geq 2, \\
r_{k+1}, & \text{ otherwise}.
\end{cases}$$
The first values of $\tilde r_{k+1}$ can be easily calculated:
$$ \tilde r_{k+1} = \begin{dcases} 0, & a_{k+1} =1, \\
1,  & a_{k+1} = 2, 3, \\ 
1, & a_{k+1} =4, a_{k+2}=1,\\
2, & a_{k+1} =4, a_{k+2}\geq 2,\\
2, & a_{k+1} = 5, 6, 7.\end{dcases} $$
Note that for $a_{k+1} \ne 4$ or $a_{k+2} \geq 2$ (i.e., for all cases except $a_{k+1}=4, a_{k+2}=1$)
\begin{equation}\label{rkbound0p}
\tilde r_{k+1} + 1 \ge \sqrt{ a_{k+1} + 1 } \ge \sqrt{\frac{q_{k+1}}{q_k}}.
\end{equation}
We can also check 
\begin{equation}\label{rub}
\tilde r_{k+1} \le \frac {a_{k+1}}2 \quad  \text{  for } \ a_{k+1} \ge 1,
\end{equation}
\begin{equation}\label{rub2}
\tilde r_{k+1} + 1 \le \frac 15 (4a_{k+1} -1) \quad  \text{  for } \ a_{k+1} \ge 3. 
\end{equation}

\begin{lemma}\label{fk}
For each $k \ge 1$ with $a_{k+1} \ge 2$, we have 
$$\bigcup_{i=1}^{q_k} \left( i\theta - \| q_k \theta\|, i\theta + r_{k+1} \|q_k \theta\| + \|q_{k+1}\theta\| \right) \subset F_k.$$
Moreover,  if $a_{k+1} = 4$ and $a_{k+2} \ge 2$, then
$$
\begin{dcases}
\bigcup_{i=1}^{q_{k}} \big( i\theta - \| q_k \theta\|, i\theta + \tilde r_{k+1} \|q_k \theta\| + \|q_{k+1}\theta\| \big) \subset F_k, & \text{if} \ a_k = 1, \\
\bigcup_{i=1}^{(\tilde r_k +1)q_{k-1}} \big( i\theta - \| q_k \theta\|, i\theta + \tilde r_{k+1} \|q_k \theta\| + \|q_{k+1}\theta\| \big) \subset F_k, & \text{if }  a_k \ge 2. 
\end{dcases}
$$
\end{lemma}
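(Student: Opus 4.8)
The plan is to upgrade the covering of $F_k$ coming from Lemma~\ref{ak4} and to match its interval lengths against the sharp size estimates \eqref{rkbound0}--\eqref{rkbound2} and \eqref{rkbound0p}--\eqref{rub}. As in Lemmas~\ref{lem1} and \ref{fkt} we may assume $k$ is even (the odd case is the mirror image), so $q_k\theta=\|q_k\theta\|$ and, immediately to the right of $q_k\theta$, the orbit $\{i\theta:1\le i\le q_k\}$ exhibits a block of short gaps of length $\|q_{k-1}\theta\|$ at the points $q_k\theta,(q_k-q_{k-1})\theta,(q_k-2q_{k-1})\theta,\dots$ followed by a long gap of length $\|q_{k-1}\theta\|+\|q_k\theta\|$. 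If $a_{k+1}=1$ then (as $\tau=1$) Lemma~\ref{fkt}(ii) gives $F_k=\mathbb T$ and all three inclusions are trivial, so assume $a_{k+1}\ge2$; then $q_k\|q_k\theta\|\le q_k/q_{k+1}<\tfrac12$, so there is a unique integer $b\ge1$ with $\tfrac1{(b+1)(b+2)}\le q_k\|q_k\theta\|<\tfrac1{b(b+1)}$. Writing $\tfrac1{(b+1)q_k}=\lambda\|q_k\theta\|$ we get $\lambda\in(b,b+2]$, and combining $b(b+1)<q_{k+1}/q_k+1$, $(b+1)(b+2)\ge q_{k+1}/q_k$ with \eqref{rkbound0}--\eqref{rkbound2} one finds $r_{k+1}\in\{2b-2,2b-1,2b\}$, with $\lambda$ forced larger in the latter two cases. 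Throughout we use $\|q_{k+1}\theta\|<\|q_k\theta\|$ and, from \eqref{qn-eq-1}, $\|q_{k-1}\theta\|=a_{k+1}\|q_k\theta\|+\|q_{k+1}\theta\|$.

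For the first inclusion, Lemma~\ref{ak4} yields
\[
F_k\supset\bigcup_{i=1}^{q_k}\Bigl(i\theta-\tfrac1{q_{k+1}},\ i\theta+(b-1)\|q_k\theta\|+\tfrac1{(b+1)q_k}\Bigr).
\]
Since $\|q_k\theta\|\le q_{k+1}^{-1}$, the target interval about $i\theta$ sits inside the $i$-th interval above whenever $b-1+\lambda\ge r_{k+1}+\|q_{k+1}\theta\|/\|q_k\theta\|$; as $b-1+\lambda>2b-1$ this holds for every $1\le i\le q_k$ when $r_{k+1}=2b-2$, and a sharper use of how large $\lambda$ is handles most indices when $r_{k+1}\in\{2b-1,2b\}$. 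The remaining indices are exactly those whose point lies in the short-gap block listed above; for two consecutive such points the Lemma~\ref{ak4} intervals overlap, because $(b-1)\|q_k\theta\|+\tfrac1{(b+1)q_k}+\tfrac1{q_{k+1}}>\|q_{k-1}\theta\|$ — an inequality following from the $b$-relations together with the continued-fraction bound $\|q_{k-1}\theta\|<\bigl(q_k+q_{k-1}/(a_{k+1}+1)\bigr)^{-1}$, which is effective precisely because $a_{k+1}$ is small here. Hence the union over the block is a single interval stretching past $i\theta+r_{k+1}\|q_k\theta\|+\|q_{k+1}\theta\|$ for each $i$ in it; at $i=q_k$ this is the interval $\bigl(0,\ q_k\theta+r_{k+1}\|q_k\theta\|+\|q_{k+1}\theta\|\bigr)\subset F_k$, completing the first inclusion.

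For the ``Moreover'' inclusions, by definition $\tilde r_{k+1}>r_{k+1}$ only if $a_{k+1}=4$ and $a_{k+2}\ge2$, in which case \eqref{rkbound0p} replaces \eqref{rkbound0}: a finer look at which orbital points already belong to $G_n$ for $q_k<n\le q_{k+1}$ shows that $a_{k+2}\ge2$ puts one extra such point into the relevant gap, so each interval of the Lemma~\ref{ak4} covering is actually one more step $\|q_k\theta\|$ longer. Rerunning the length comparison and the near-$q_k\theta$ overlap argument with $\tilde r_{k+1}$ in place of $r_{k+1}$ gives $\bigl(i\theta-\|q_k\theta\|,\ i\theta+\tilde r_{k+1}\|q_k\theta\|+\|q_{k+1}\theta\|\bigr)\subset F_k$. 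When $a_k=1$ the block to the right of $q_k\theta$ is the single point $q_k\theta$ before a long gap, and the argument goes through for all $1\le i\le q_k$. When $a_k\ge2$ that block has $a_k$ points $q_k\theta,(q_k-q_{k-1})\theta,\dots,(q_k-(a_k-1)q_{k-1})\theta$ and the extra room is not available on all of it; using \eqref{rub} (i.e.\ $\tilde r_k\le a_k/2$) one checks that the surviving indices are precisely $1\le i\le(\tilde r_k+1)q_{k-1}$, since every block index $q_k-cq_{k-1}$ with $0\le c\le a_k-1$ exceeds $(\tilde r_k+1)q_{k-1}$.

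The hardest part is that every comparison above is of near-equality type: neither the length inequality $b-1+\lambda\ge r_{k+1}+\|q_{k+1}\theta\|/\|q_k\theta\|$ in the cases $r_{k+1}\in\{2b-1,2b\}$, nor the overlap inequality $(b-1)\|q_k\theta\|+\tfrac1{(b+1)q_k}+\tfrac1{q_{k+1}}>\|q_{k-1}\theta\|$, can be read off from the crude bounds \eqref{qn-estimate}; both genuinely require the exact calibration of $b$ and of $r_{k+1},\tilde r_{k+1}$ encoded in \eqref{rkbound0}--\eqref{rkbound2} and \eqref{rkbound0p}--\eqref{rub}. Identifying which indices near $q_k\theta$ fail the single-interval estimate, and verifying that the overlapping union recaptures exactly them, is where the casework is concentrated.
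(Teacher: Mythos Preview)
There is a genuine structural gap. Your argument for the first inclusion pivots on distinguishing ``most indices'' from ``remaining indices'' according to whether the single Lemma~\ref{ak4} interval at $i\theta$ already contains the target interval. But the Lemma~\ref{ak4} interval has the \emph{same} length $(b-1)\|q_k\theta\|+\tfrac{1}{(b+1)q_k}+\tfrac{1}{q_{k+1}}$ for every $i$; the comparison $b-1+\lambda\ge r_{k+1}+\|q_{k+1}\theta\|/\|q_k\theta\|$ is $i$-independent, so it either holds for all $i$ or for none. Consequently the dichotomy you set up does not exist, and the accompanying overlap inequality $(b-1)\|q_k\theta\|+\tfrac{1}{(b+1)q_k}+\tfrac{1}{q_{k+1}}>\|q_{k-1}\theta\|$ is neither proved nor true in general: writing $\tfrac{1}{(b+1)q_k}=\lambda\|q_k\theta\|$ with $\lambda\le b+2$ and $\|q_{k-1}\theta\|=a_{k+1}\|q_k\theta\|+\|q_{k+1}\theta\|$, the inequality fails already for $a_{k+1}=b^2+3b+1$ once $b\ge 2$. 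What actually happens in the paper is a direct arithmetic verification, splitting $a_{k+1}\ge 3$ into the three sub-ranges $b^2+b-1\le a_{k+1}\le b^2+2b-1$, $b^2+2b\le a_{k+1}\le b^2+3b$, $a_{k+1}=b^2+3b+1$, computing $\lfloor\sqrt{4a_{k+1}+5}\rfloor$ in each, and checking the length inequality holds outright for every $i$. The overlap trick is used only for $a_{k+1}=2$, where it is valid and where the paper runs it in a specific two-interval form (your \eqref{in1}--\eqref{in2}), not as a blanket mechanism.

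The ``moreover'' part is also not established. Your description that ``$a_{k+2}\ge 2$ puts one extra orbital point into the relevant gap, so each Lemma~\ref{ak4} interval is one step $\|q_k\theta\|$ longer'' has no derivation, and the concluding combinatorial claim---that every block index $q_k-cq_{k-1}$ with $0\le c\le a_k-1$ exceeds $(\tilde r_k+1)q_{k-1}$---is false: for $c=a_k-1$ one has $q_k-(a_k-1)q_{k-1}=q_{k-1}+q_{k-2}$, which is $\le(\tilde r_k+1)q_{k-1}$ whenever $\tilde r_k\ge 1$ (e.g.\ $a_k=3$ gives $\tilde r_k=1$). The paper proves this part differently: it goes back to Lemma~\ref{lem1}(i) rather than Lemma~\ref{ak4}, and for $a_{k+1}=4$, $a_{k+2}\ge 2$ bounds $(bq_k+i)\bigl((3-b)\|q_k\theta\|+\|q_{k+1}\theta\|\bigr)$ explicitly, using $\|q_{k+1}\theta\|\le\|q_k\theta\|/a_{k+2}$ and $q_{k-1}<q_k/a_k$ together with \eqref{rub} to force the estimate for exactly those $i\le(\tilde r_k+1)q_{k-1}$.
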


\begin{proof}
For the first part of the proof, 
We distinguish two cases.

(i) Suppose $a_{k+1} = 2$. 
Then $r_{k+1}=1$ and $\frac 14 < q_k \| q_k \theta \| < \frac 12$. Thus, by applying Lemma~\ref{ak4} for $b=1$, we have 
\begin{equation}\label{inFk}
\bigcup_{1 \le i \le q_k} 
\left( i\theta - \frac{1}{q_{k+1}} , \  i\theta +  \frac{1}{2q_k} \right) \subset F_k .
\end{equation}
Using the equality (\ref{qn-eq-2}) for $n=k-1$, 
and observing $q_{k+1}=a_{k+1}q_k+q_{k-1}=2q_k+q_{k-1}$, we have
\begin{align*}
\frac{1}{2q_k} + \frac{1}{q_{k+1}} 
&= \frac{1}{q_k} - \left( \frac{1}{2q_k} - \frac{1}{q_{k+1}} \right) \\
&=\frac{q_k \| q_{k-1} \theta \| + q_{k-1} \| q_k \theta \|}{q_k} - \frac{q_{k+1} -2q_k}{2q_k q_{k+1}}  \\
&=\| q_{k-1} \theta \| + \frac{q_{k-1}}{q_k} \left( \| q_k \theta \| - \frac{1}{2 q_{k+1}}  \right) 
> \| q_{k-1} \theta \|.
\end{align*}
Then by \eqref{inFk},  for $q_{k-1} < i \le q_k$
\begin{multline}\label{in1}
\left( i\theta - \|q_k\theta\|, \  i\theta + \|q_k \theta \| + \| q_{k+1} \theta \| \right) \\
\subset \left( i\theta - \frac{1}{q_{k+1}} , \  i\theta +  \frac{1}{2q_k} \right) \cup
 \left( (i - q_{k-1} )\theta - \frac{1}{q_{k+1}} , \  (i - q_{k-1}) \theta +  \frac{1}{2q_k} \right) \subset F_k.
\end{multline}

On the other hand, by (\ref{qn-eq-2}), and the assumption $a_{k+1}=2$, we can check 
\[\|q_k \theta \| + \| q_{k+1} \theta \| < \frac{1}{q_k + q_{k-1}}, \qquad \| q_{k+1} \theta \|< \frac{1}{2q_k + q_{k-1}}. \]
Thus, for $ 1 \le i \le q_{k-1}$
\begin{multline}\label{in2}
\left( i\theta - \|q_k\theta\|, \  i\theta + \|q_k \theta \| + \| q_{k+1} \theta \| \right)  \\
\subset \left( i\theta - \frac{1}{q_{k+1}}, \  i\theta + \min\left(  \frac{1}{q_k + q_{k-1}} , \|q_k\theta \| + \frac{1}{2q_k + q_{k-1}} \right) \right) \subset F_k,
\end{multline}
where the second inclusion is from Lemma~\ref{lem1} (i).

Combining \eqref{in1} and \eqref{in2}, we conclude that for $a_{k+1} = 2$
\begin{equation*}
\bigcup_{1 \le i \le q_k}
\left( i\theta - \|q_k\theta\|, \  i\theta + \|q_k \theta \| + \| q_{k+1} \theta \| \right)
\subset F_k .
\end{equation*}


(ii) Assume $a_{k+1} \ge 3$. There exists an integer $b \ge 1$ satisfying $$b(b+1) < \frac{1}{q_k \| q_k \theta \|} \le (b+1)(b+2).$$
Thus, we have $b(b+1) - 1 \le a_{k+1} \le (b+1)(b+2) -1$.

By the fact 
\[{1 \over q_k}> \|q_{k-1}\theta\|=a_{k+1}\|q_{k}\theta\|+\|q_{k+1}\theta\|,\]
we have
\begin{equation}\label{bb-esti}\begin{split}
{1 \over (b+1)q_k}&> {a_{k+1}-b\over b+1}\|q_{k}\theta\|+{b\|q_{k}\theta\|+\|q_{k+1}\theta\| \over b+1} \\
&>{a_{k+1}-b\over b+1}\|q_{k}\theta\|+\|q_{k+1}\theta\|.
\end{split}\end{equation}
We will apply Lemma~\ref{ak4} and we will distinguish three parts according to the value of $a_{k+1}$.

If $b^2 + b - 1 \le a_{k+1} \le b^2+2b -1 $, then
$\left\lfloor \sqrt{ 4 a_{k+1} +5 } \right\rfloor =  2b+1 $ and by \eqref{bb-esti}
$$\left( 2b - 2 \right) \|q_k \theta \| + \| q_{k+1} \theta \| < (b-1) \|q_k \theta\| +  \frac{1}{(b+1)q_k}.$$
If $ b^2 +2b \le a_{k+1} \le b^2 + 3b, $
then $\left\lfloor \sqrt{ 4 a_{k+1} +5 } \right\rfloor =  2b+2$ and  by \eqref{bb-esti}
$$\left( 2b - 1 \right) \|q_k \theta \| + \| q_{k+1} \theta \| < (b-1) \|q_k \theta\| +  \frac{1}{(b+1)q_k}.$$
Finally if $a_{k+1} = b^2 + 3b + 1$ we have $\left\lfloor \sqrt{ 4 a_{k+1} +5 } \right\rfloor =  2b + 3$ and by \eqref{bb-esti}
$$ 2b \|q_k \theta \| + \| q_{k+1} \theta \| < (b-1) \|q_k \theta\| +  \frac{1}{(b+1)q_k}.$$
Therefore, in all cases, we have
$$\left( \left\lfloor \sqrt{ 4 a_{k+1} +5 } \right\rfloor -3 \right) \|q_k \theta \| + \| q_{k+1} \theta \| \le (b-1) \|q_k \theta\| +  \frac{1}{(b+1)q_k}.$$
By Lemma~\ref{ak4}, we have
\begin{equation*}
\bigcup_{1 \le i \le q_k} 
\left( i\theta - \| q_{k}\theta\|, \  i\theta + r_{k+1} \|q_k \theta \| + \| q_{k+1} \theta \| \right)
\subset F_k. 
\end{equation*}

Now we prove the second assertion of the lemma.
We will apply Lemma~\ref{lem1} (i). 
To this end, we will obtain in the following many estimates of the form: 
\[ (b-1) \|q_k \theta\| + {1 \over  b q_k + i } \qquad (1 \leq i \leq q_k) .\]

(a) If $a_k = 1$, then we have
\begin{equation*}
\begin{split}
q_k\| q_{k+1} \theta \| &= q_{k-1}  \|q_{k+1} \theta \|  + q_{k-2} \|q_{k+1} \theta \| \\
&\le (a_{k+2} -1) q_{k-1} \|q_{k+1} \theta \| + q_{k-2} \|q_{k+1} \theta \| \\
&< a_{k+2} q_{k-1} \| q_{k+1} \theta\| < q_{k-1} \| q_k \theta\|.
\end{split}
\end{equation*}
Hence, for all $b \ge 1$
\begin{equation*}
\begin{split}
(b+1)q_k \left( (3-b) \| q_k \theta \| + \| q_{k+1} \theta \| \right) 
&\le 4 q_k \|q_k \theta\| + 2q_k\| q_{k+1} \theta \| \\
&< 4 q_k \|q_k \theta \| + q_{k-1} \| q_k \theta \| +q_k \|q_{k+1}\theta\| \\
&= q_{k+1} \| q_k \theta \| + q_k\| q_{k+1} \theta \| = 1,
\end{split}
\end{equation*}
which yields that for all $b \ge 1$
\begin{equation*}
2 \| q_k \theta \| + \| q_{k+1} \theta \| <  (b-1) \|q_k \theta\| +  \frac{1}{(b+1)q_k}.
\end{equation*}
Therefore, by Lemma~\ref{lem1} (i), we have
\begin{equation*}
\bigcup_{1 \le i \le q_k} 
\left( i\theta - \| q_{k}\theta\|, \  i\theta + 2 \|q_k \theta \| + \| q_{k+1} \theta \| \right)
\subset F_k . 
\end{equation*}

(b) Suppose $a_k \ge 2$. We will prove for all $b \ge 1$
 \[ 2 \| q_k \theta \| + \| q_{k+1} \theta \| <  (b-1) \|q_k \theta\| +  \frac{1}{bq_k +(\tilde r_k+1)q_{k-1}},\]
which is equivalent to 
\[(bq_k + (\tilde r_k+1)q_{k-1}) \left( (3-b) \| q_k \theta \| + \| q_{k+1} \theta \| \right)<1.\]
In fact, for $1 \le b \le 3$, 
by \eqref{rub}, we have
\begin{equation}\label{31}\begin{split}
&(bq_k + (\tilde r_k+1)q_{k-1}) \left( (3-b) \| q_k \theta \| + \| q_{k+1} \theta \| \right) \\
&\le \left(bq_k + \left(\frac{a_k}2+1 \right) q_{k-1} \right) \left( (3-b) \| q_k \theta \| + \| q_{k+1} \theta \| \right)\\
&= (3b-b^2) q_k \| q_k \theta \| + (b-1) q_k\| q_{k+1} \theta \| + \left( \frac{(3-b)a_k}{2} + 2-b \right)q_{k-1}\|q_k \theta\| \\
&\quad + \left(\frac{a_k}2+1 \right) q_{k-1}\|q_{k+1}\theta\| + q_k\| q_{k+1} \theta \| + q_{k-1}\|q_k \theta\|.
\end{split}
\end{equation}
By (\ref{qn-eq-1}) and (\ref{pq-recurrence}) respectively, we have the estimations:
\begin{equation}\label{32}
\|q_{k+1}\theta\| \leq {1 \over a_{k+2} }\|q_k\theta\| \quad \text{ and } \quad q_{k-1}<{q_k \over a_k}.
\end{equation}
Thus, for $1 \le b \le 2$
\begin{multline*}
(3b-b^2) q_k \| q_k \theta \| + (b-1) q_k\| q_{k+1} \theta \| \\ 
+ \left( \frac{(3-b)a_k}{2} + 2-b \right)q_{k-1}\|q_k \theta\| + \left(\frac{a_k}2+1 \right) q_{k-1}\|q_{k+1}\theta\| \\
< \left(3b-b^2 + \frac{b-1}{a_{k+2}} + \frac{3-b}{2} + \frac{2-b}{a_k} + \left( \frac{1}{2} + \frac{1}{a_k} \right) \frac{1}{a_{k+2}} \right) q_{k}\|q_{k}\theta\|.
\end{multline*}
By using the assumption $a_{k+2}\geq 2$ and $1 \le b \le 2$, 
we then deduce
\begin{align*}
&(bq_k + (\tilde r_k+1)q_{k-1}) \left( (3-b) \| q_k \theta \| + \| q_{k+1} \theta \| \right) \\
&\le \left(3b-b^2 + \frac{b-1}{2} + \frac{3-b}{2} +\frac{2-b}{2} + \frac 12 \right) q_{k}\|q_{k}\theta\| + q_k\| q_{k+1} \theta \| + q_{k-1}\|q_k \theta\|\\
&= \left(\frac{5}{2}+ \frac{5b}{2} -b^2 \right) q_{k}\|q_{k}\theta\| + q_k\| q_{k+1} \theta \| + q_{k-1}\|q_k \theta\|\\
&\le  4 q_k \|q_k \theta\| + q_{k-1} \| q_k \theta\| + q_k\| q_{k+1} \theta \| = 1.
\end{align*}
For the last equality, we have used the assumption $a_{k+1}=4$ and the fact (\ref{qn-eq-2}). 

If $b = 3$, then from \eqref{31} and \eqref{32} we have
\begin{align*}
(bq_k + (\tilde r_k+1)q_{k-1}) \left( (3-b) \| q_k \theta \| + \| q_{k+1} \theta \| \right)
&\le \left(3q_k + \left(\frac{a_k}2+1 \right) q_{k-1} \right) \| q_{k+1} \theta \| \\
&< \left( \frac 72 + \frac{1}{a_k} \right) q_k \| q_{k+1} \theta \| \\
&\le 4 q_k \| q_{k+1} \theta \| < 1.
\end{align*}
For $b \ge 4$, it is easy to see that 
\begin{equation*}
(bq_k + (\tilde r_k+1)q_{k-1}) \left( (3-b) \| q_k \theta \| + \| q_{k+1} \theta \| \right) < 0 < 1 .
\end{equation*}

Thus, for each $1 \le i \le (\tilde r_k+1) q_{k-1}$, we have for any $b \ge 1$
\begin{equation*}
2 \| q_k \theta \| + \| q_{k+1} \theta \| <  (b-1) \|q_k \theta\| +  \frac{1}{bq_k +(\tilde r_k+1)q_{k-1}} \le  (b-1) \|q_k \theta\| +  \frac{1}{bq_k + i}.
\end{equation*}
By Lemma~\ref{lem1} (i), we have
\begin{equation*}
\bigcup_{i=1}^{(\tilde r_k +1)q_{k-1}} \big( i\theta - \| q_k \theta\|, i\theta + \tilde r_{k+1} \|q_k \theta\| + \|q_{k+1}\theta\| \big) \subset F_k. \qedhere 
\end{equation*}
The proof of Lemma~\ref{fk} is completed.
\end{proof}

Now we are ready to give a new nested Cantor subset of $\mathcal{U}_\tau[\theta]$.
Remind that we assume $k$ is even. We denote
\begin{equation}\label{tfk}
D_k := 
\begin{dcases}
\mathbb T, &a_{k+1} = 1, \\
\bigcup_{i=1}^{q_{k}} \left( i \theta - \| q_{k} \theta\|, i \theta + \tilde r_{k+1} \|q_{k} \theta\| + \|q_{k+1}\theta\| \right). &a_{k+1} \ge 2.
\end{dcases}
\end{equation}
For the case $k$ is odd, we have the symmetric formula:
\begin{equation}\label{tfk}
D_k := 
\bigcup_{i=1}^{q_{k}} \left( i \theta - \tilde r_{k+1} \|q_{k} \theta\| - \|q_{k+1}\theta\|,  i\theta + \| q_{k} \theta\| \right).
\end{equation}
Then, by Lemma~\ref{fk}, we have $D_k \subset F_k$, thus
$$ D:= \bigcap_{k=1}^\infty D_k \subset \bigcap_{k=1}^\infty  F_k .$$

Now we will investigate the numbers of subintervals of $D_{k+\ell}$ in each interval of $D_k$.
Let $(u_m)$ be the Fibonacci sequence defined by $u_0 = 0, u_1 = 1$ and $u_{m+1} = u_m + u_{m-1}$.
\begin{lemma}\label{num}
Suppose that $a_{k+1} \ge 2$, $a_{k+\ell+1} \ge 2$ and $a_{k+m} = 1$ for all $2 \le m \le \ell$.
Then the number of points of $j \theta$, $1\le j \le q_{k+\ell}$ in each interval of $D_k$ is
$$u_{\ell} \tilde r_{k+1} + u_{\ell+1} \ge \frac{q_{k+\ell}}{ \sqrt{ q_k q_{k+1}}}.$$ 
\end{lemma}

\begin{proof}
For each integer $n \ge 0$ we have a unique representation (called Ostrowski's expansion, see \cite{RS}):
$$n = \sum_{m = 0}^\infty c_{m+1} q_m,$$
where $0 \le c_1 < a_1$,  $0 \le c_{m+1} \le a_{m+1}$, and $c_m= 0$ if $c_{m+1}= a_{m+1}$.

If $$j = \sum_{m = k}^{k+\ell-1} c_{m+1} q_m$$
is an integer with its representation coefficients:
\begin{equation}\label{ckcondition}
0 \le c_{k+1} \le \tilde r_{k+1} < a_{k+1}, \quad  0 \le c_{m+1} \le a_{m+1} = 1 \  (k < m \le k + \ell),
\end{equation}
then, by the fact that $q_k\theta - p_k >0$ if and only if $k$ is even, we have
\begin{align*}
j\theta &= c_{k+1}q_k \theta + c_{k+2} q_{k+1}\theta + \dots + c_{k+\ell} q_{k+\ell-1}\theta \\
&\le \tilde r_{k+1} q_k \theta + a_{k+3} q_{k+2} \theta + a_{k+5} q_{k+4} \theta + \dots 
< \tilde r_{k+1} \| q_k \theta \| + \|q_{k+1} \theta\|, \\
j\theta &\ge a_{k+2} q_{k+1} \theta + a_{k+4} q_{k+3} \theta + \dots > - \|q_{k} \theta\|.
\end{align*}
Thus, for each $i$ with $1\le i \le q_k$
$$
i\theta - \| q_k \theta \| < (i+j) \theta < i \theta + \tilde r_{k+1} \| q_{k} \theta \| + \| q_{k+1} \theta \|. 
$$
The number of the above integer $j$'s of which expansion satisfying \eqref{ckcondition} is the number of $\ell$-tuples of $(c_{k+1}, c_{k+2}, \dots , c_{k+\ell})$ such that 
\[ 0 \le c_{k+1} \le \tilde r_{k+1} < a_{k+1}, \quad 0 \le c_{m+1} \le 1 = a_{m+1} \ \text{ for } \ k +1 \le m \le k + \ell -1
\] and \[c_{m} c_{m+1} = 0 \ \text{ for }  k+1 \le m \le k + \ell -1,\]
which is $u_{\ell} \tilde r_{k+1} + u_{\ell+1}$.  
Note that if $\ell =1$, then the number of $j$'s satisfying \eqref{ckcondition}
 is $\tilde r_{k+1} + 1 = u_{1} \tilde r_{k+1} + u_{2}$.
Hence, 
for each $1 \le i  \le q_k$
$$ \# \{ 1 \le j \le q_{k+\ell} : j\theta \in \left( i\theta - \| q_k \theta\|, \ i\theta + \tilde r_{k+1} \|q_k \theta\| + \|q_{k+1}\theta\| \right) \} = u_{\ell} \tilde r_{k+1} + u_{\ell+1}.$$ 

If $a_{k+1} \ne 4$ or $\ell = 1$, then using \eqref{rkbound0p} and the fact $q_{k+\ell}=u_\ell q_{k+1} + u_{\ell-1} q_{k}$, the number of points satisfies
\begin{align*}
u_{\ell} \tilde r_{k+1} + u_{\ell+1} &= u_{\ell} ( \tilde r_{k+1} + 1) + u_{\ell-1} 
\ge u_\ell \sqrt{ \frac{q_{k+1}}{q_k} } + u_{\ell-1} \sqrt{ \frac{q_{k} }{q_{k+1}}}  \\
&= \frac{u_\ell q_{k+1} + u_{\ell-1} q_{k} }{\sqrt{q_kq_{k+1}}}
=  \frac{q_{k+\ell}}{\sqrt{q_kq_{k+1}}} . 
\end{align*}
If $a_{k+1} = 4$ and $\ell \ge 2$, then ${q_{k+1} \over q_k}< 5$, thus
 \[
 {\sqrt{q_{k+1} \over q_k}-2 \over 1- \sqrt{q_k \over q_{k+1}}}< {\sqrt{5}-2 \over 1-{1\over \sqrt{5}}} <\frac 12 \le {u_{\ell-1} \over u_\ell},
 \]
which is equivalent to 
$$
2 u_{\ell} + u_{\ell-1} > u_\ell \sqrt{q_{k+1} \over q_k} + u_{\ell-1}\sqrt{q_k \over q_{k+1}} .
$$
Therefore, we have 
\begin{align*}
u_{\ell} \tilde r_{k+1} + u_{\ell+1} &=  u_{\ell} + u_{\ell+1} = 2 u_{\ell} + u_{\ell-1} \\
&> u_\ell \sqrt{q_{k+1} \over q_k} + u_{\ell-1}\sqrt{q_k \over q_{k+1}} 
={ u_\ell q_{k+1} + u_{\ell-1} q_{k}  \over \sqrt{ q_k q_{k+1}}}= \frac{q_{k+\ell}}{ \sqrt{ q_k q_{k+1}}}.
\end{align*}
\end{proof}

We use the mass distribution principle (e.g. \cite{FALC2}):
\begin{fact}[Mass Distribution Principle]\label{MDP}
Let $E \subset \mathbb R^n$ and let $\mu$ be a finite Borel measure with $\mu(E) >0$.
Suppose that there are numbers $s \ge 0$, $c > 0$ and $\delta_0 >0$ such that  $$
\mu(U) \le c | U |^s$$ for all sets $U$ with $|U| \le \delta_0$, where $|\cdot|$ stands for the Euclidean diameter.
Then
$$ \Hdim (E) \ge s.$$
\end{fact}

Now we are ready to estimate the Hausdorff dimension of $\mathcal U_{1} [\theta] $.
\begin{theorem}\label{beta1}
For $\tau = 1$ and for any irrational $\theta$
\begin{equation*}
\Hdim \left( \mathcal U_{\tau} [\theta] \right) \ge \frac{1}{w(\theta)+1}.
\end{equation*}
\end{theorem}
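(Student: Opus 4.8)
The plan is to build an explicit Cantor subset of $\mathcal U_1[\theta]$ out of the sets $D_k$, apply Fact~\ref{lower}, and then read off the exponent $\tfrac1{w(\theta)+1}$ from the telescoped forms of Lemmas~\ref{mi} and~\ref{gap} together with \eqref{def-w}. First I would dispose of the case where $\Lambda=\{k:a_{k+1}\ge 3 \text{ or } a_{k+2}=2\}$ is finite: then for all large $k$ we have both $a_{k+1}\le 2$ and $a_{k+2}\ne 2$, which forces $a_m=1$ for all large $m$, so Lemma~\ref{fkt}(ii) gives $F_k=\mathbb T$ for all large $k$ and hence $\mathcal U_1[\theta]=\mathbb T$, the bound being trivial ($w(\theta)=1$ here). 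So from now on assume $\Lambda=\{k_1<k_2<\cdots\}$ is infinite.

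Next I would construct the Cantor set level by level. Level $0$ is the single interval $\mathbb T$; for $i\ge 1$, inside each level-$(i-1)$ basic interval I place the $m_i:=m_{k_{i-1},k_i}$ components of $D_{k_i}$ that it contains, produced by Lemma~\ref{num} and bounded below by Lemma~\ref{mi} (with the convention $m_1:=q_{k_1}$). Consecutive components of $D_{k_i}$ are separated by a gap $\varepsilon_i\ge\tfrac17\|q_{k_i-1}\theta\|$ by Lemma~\ref{gap}, and $(\varepsilon_i)$ is strictly decreasing because $(k_i)$ increases. Since each level-$i$ basic interval lies inside a component of $D_{k_j}$ for every $j\le i$, the Cantor set $F$ so obtained satisfies $F\subset\bigcap_{k\ge 1}D_k\subset\bigcap_{k\ge 2}\widetilde F_k\subset\bigcap_{k\ge 2}F_k\subset\mathcal U_1[\theta]$, so Fact~\ref{lower} yields
\[
\dim_H\bigl(\mathcal U_1[\theta]\bigr)\ \ge\ \dim_H F\ \ge\ \varliminf_{i\to\infty}\frac{\log(m_1\cdots m_{i-1})}{-\log(m_i\varepsilon_i)}.
\]

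It then remains to evaluate this lower limit. Using the product form of Lemma~\ref{mi} and telescoping, $m_1\cdots m_{i-1}\ge\sqrt{q_{k_1}q_{k_{i-1}}}\,\prod_{k_1\le l<k_{i-1},\ a_{l+1}=1}\sqrt{q_{l+1}/q_l}$, hence $\log(m_1\cdots m_{i-1})\ge\tfrac12\log q_{k_{i-1}}$. For the denominator, from $m_i\ge\sqrt{q_{k_i}/q_{k_{i-1}}}\,\prod_{k_{i-1}\le l<k_i,\ a_{l+1}=1}\sqrt{q_{l+1}/q_l}$ and $\varepsilon_i>\tfrac1{14q_{k_i}}$ (by \eqref{qn-estimate}),
\[
-\log(m_i\varepsilon_i)\ \le\ \log 14+\tfrac12\log q_{k_{i-1}}+\tfrac12\Bigl(\log q_{k_i}-\!\!\!\sum_{k_{i-1}\le l<k_i,\ a_{l+1}=1}\!\!\!\log\tfrac{q_{l+1}}{q_l}\Bigr).
\]
The decisive point is that the bracket is small: between $k_{i-1}$ and $k_i$ every partial quotient is $1$ except possibly $a_{k_{i-1}+1}$ and one value $a_{k_{i-1}+2}=2$, so the bracket equals $\log q_{k_{i-1}}+\sum_{k_{i-1}\le l<k_i,\ a_{l+1}\ne 1}\log(q_{l+1}/q_l)\le\log q_{k_{i-1}+2}$, which by \eqref{def-w} is $(w(\theta)+o(1))\log q_{k_{i-1}}$ as $i\to\infty$. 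Thus $-\log(m_i\varepsilon_i)\le\tfrac12\bigl(w(\theta)+1+o(1)\bigr)\log q_{k_{i-1}}+O(1)$, and dividing by the numerator bound and letting $i\to\infty$ (so $q_{k_{i-1}}\to\infty$) gives $\dim_H(\mathcal U_1[\theta])\ge\tfrac1{w(\theta)+1}$.

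I expect the one genuine difficulty to be this cancellation. A naive bound leaves an uncontrolled $\log q_{k_i}$ in the denominator, which may be arbitrarily large relative to $\log q_{k_{i-1}}$; the argument closes only because the long run of partial quotients equal to $1$ between consecutive elements of $\Lambda$ contributes, through Lemma~\ref{num}, matching Fibonacci factors $\prod_{a_{l+1}=1}\sqrt{q_{l+1}/q_l}$ to $m_i$ that absorb this growth, leaving only a factor comparable to $q_{k_{i-1}+2}\le q_{k_{i-1}}^{\,w(\theta)+o(1)}$. The remaining, routine but slightly delicate, point is to check that the intervals supplied by Lemma~\ref{num} can be organized into a genuinely nested Cantor construction, so that Fact~\ref{lower} applies verbatim.
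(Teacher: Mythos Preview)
Your approach is essentially the paper's: build a Cantor subset from the $D_k$ indexed by $\Lambda$, apply Fact~\ref{lower} with the counts from Lemma~\ref{mi} and the gaps from Lemma~\ref{gap}, then telescope. The paper's display~\eqref{beta1lb} is exactly your computation with an index shift (it writes $m_1\cdots m_i$ against $m_{i+1}\varepsilon_{i+1}$). Your handling of the case $\Lambda$ finite is more explicit than the paper's.

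There is one genuine slip in your final estimate. Your bound ``the bracket $\le \log q_{k_{i-1}+2}$, which by~\eqref{def-w} is $(w(\theta)+o(1))\log q_{k_{i-1}}$'' is not justified as written: the ratio $\log q_{n+2}/\log q_n$ can be as large as $w(\theta)^2+o(1)$, not $w(\theta)+o(1)$. The argument is rescued by a case split. If $k_i>k_{i-1}+1$ then $k_{i-1}+1\notin\Lambda$ forces $a_{k_{i-1}+2}\le 2$, so $q_{k_{i-1}+2}\le 3q_{k_{i-1}+1}$ and your bracket is $\le \log q_{k_{i-1}+1}+\log 3$. If $k_i=k_{i-1}+1$ the sum $\sum_{k_{i-1}\le l<k_i,\ a_{l+1}\ne 1}$ has only the term $l=k_{i-1}$, so the bracket is already $\le\log q_{k_{i-1}+1}$; here $a_{k_{i-1}+2}$ may be large, so you must \emph{not} pass to $q_{k_{i-1}+2}$. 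In either case the bracket is $\le\log q_{k_{i-1}+1}+O(1)\le(w(\theta)+o(1))\log q_{k_{i-1}}$, and your conclusion follows. This is precisely the distinction the paper encodes with $t(i)\in\{1,2\}$ in the last line of~\eqref{beta1lb}.
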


\begin{proof}
We may assume $w (\theta) < \infty$.
If $a_k = 1$ for all large $k$, then Lemma~\ref{fkt} (2) implies that $\mathcal U_{\tau} [\theta]  = \mathbb T$. 
Thus we assume that $a_k \geq 2$ for infinitely many $k$'s.
Let $(k_i)$ be the increasing sequence of integers such that  $k_0 = 0$ and 
$$\{ k_1, k_2, \dots \} = \{ k \in \mathbb N : a_{k+1} \ge 2 \}.$$

Denote by $m_{i} $ the number of intervals of $D_{k_{i}}$ contained in each interval of $D_{k_{i-1}}$.
Then by Lemma~\ref{num} we have
\begin{equation}\label{eqmi}
m_{i} \ge  \frac{q_{k_{i}}}{\sqrt{ q_{k_{i-1}}q_{k_{i-1}+1}  }} .
\end{equation}

Define $\mu$ on $D$ given by 
$$
\mu(I) = \prod_{n=1}^{i} \frac{1}{m_n}
$$
for each interval $I$ of the form $ \left( j\theta - \| q_{k_i} \theta\|, j \theta + \tilde r_{k_i+1} \|q_{k_i} \theta\| + \|q_{k_i+1}\theta\| \right)$ with $1 \le j \le q_{k_i}$ in $D_{k_i}$.
Note that 
\begin{equation}\label{eq:jj}
| j_1 - j_2 | \ge \|q_{k_i-1} \theta \| \quad \text{ for } \ 1 \le j_1, j_2 \le q_{k_i} \text{ and } j_1 \ne j_2.
\end{equation}

Let $U$ be an interval with
$$ \| q_{k_{i+1}-1} \theta \| \le  |U| < \| q_{k_i-1} \theta \|$$ 
for some $i \ge 1$.
Then by \eqref{eq:jj}, $U$ intersects at most $(|U| / \| q_{k_{i+1}-1}\theta \| +2)$ interval of $D_{k_{i+1}}$.
Thus, we have
\begin{equation}\label{eq:33}
\mu(U) \le \frac{1}{m_1 m_2 \cdots m_{i+1}} \left({|U|\over \|q_{k_{i+1}-1}\theta\|} +2 \right) \\
\le \frac{3|U|}{m_1 m_2 \cdots m_{i+1} \| q_{k_{i+1}-1}\theta \| }. 
\end{equation}

If $a_{k_i+1} \ge 3$,  then by \eqref{eq:jj}
the smallest gap between two intervals in $D_{k_i}$ is at least  
\begin{align*}
\| q_{k_i -1} \theta \| - \left( \tilde r_{k_i+1} +1 \right) \| q_{k_i}\theta \| - \| q_{k_i+1}\theta \| 
&=  \left(a_{k_i+1} -1 - \tilde r_{k_i+1} \right) \| q_{k_i}\theta \| \\
&> \frac{a_{k_i+1} -1 - \tilde r_{k_i+1} }{a_{k_i+1} + 1} \| q_{k_i-1}\theta \| \\
&\ge \frac {\| q_{k_i-1}\theta \|}{5} > \frac{|U|}{5},
\end{align*}
where we use \eqref{qn-eq-1} and \eqref{rub2} for the first and the second inequalities.
Thus $U$ intersects at most 6 intervals of $D_{k_i}$ and
\begin{equation}\label{eq:34}
\mu(U) \le \frac{6}{m_1 m_2 \cdots m_{i}} . 
\end{equation}
If $a_{k_i+1} = 2$, then each interval in $D_{k_i}$ is of length 
$$ (\tilde r_{k_i+1} +1) \|q_{k_i} \theta\| + \|q_{k_i+1}\theta\| = 
2 \|q_{k_i} \theta\| + \|q_{k_i+1}\theta\|  = \|q_{k_i-1} \theta \|> |U| $$
Therefore, $U$ intersects at most 2 intervals of $D_{k_i}$. 
Thus
\begin{equation}\label{eq:35}
\mu(U) \le \frac{2}{m_1 m_2 \cdots m_{i}}. 
\end{equation}

Hence, \eqref{eq:33}, \eqref{eq:34} and \eqref{eq:35} imply that 
$$
\mu(U) \le \frac{6}{m_1 m_2 \cdots m_{i+1}} \min \left(  \frac{|U|}{\| q_{k_{i+1}-1}\theta \|}, m_{i+1} \right). 
$$
For any $0 < s < 1$, since $\min(x,y) \le x^s y^{1-s}$ for $x,y \ge 1$, we have
\begin{equation*}
\mu(U) \le \frac{6}{m_1 m_2 \cdots m_{i}} \left(  \frac{|U|}{m_{i+1}\| q_{k_{i+1}-1}\theta \|} \right)^s. 
\end{equation*}
By \eqref{eqmi}, we have
\begin{equation}\label{eq:36}
\begin{split}
\mu(U) &\le 6  \frac{\sqrt{ q_{k_{0}}q_{k_{0}+1}  }} {q_{k_{1}}} \frac{\sqrt{ q_{k_{1}}q_{k_{1}+1}  }} {q_{k_{2}}} \cdots \frac{\sqrt{ q_{k_{i-1}}q_{k_{i-1}+1}  }} {q_{k_{i}}}  \left( \frac{  \sqrt{ q_{k_i} q_{k_{i}+1}}  |U|}{ q_{k_{i+1} }\| q_{k_{i+1}-1}\theta \| } \right)^s \\
&\le 6  \sqrt{ \frac{q_{k_{0}}} {q_{k_{1}}}} \sqrt{ \frac{q_{k_{1}}} {q_{k_{2}}}} \cdots \sqrt{ \frac{q_{k_{i-1}}} {q_{k_{i}}}} \left( 2  \sqrt{ q_{k_i} q_{k_{i}+1}}  |U| \right)^s  
\le 12 \frac{ \left( \sqrt{ q_{k_i} q_{k_{i}+1}}  |U| \right)^s}{\sqrt{ q_{k_i} }} .
\end{split}
\end{equation}

%

Let $s$ be any real number satisfying 
\begin{equation*}\label{beta1lb}
s <  \frac{1}{w+1} = \varliminf_{i \to \infty} \dfrac{ \displaystyle \log q_{k_i} }{ \log q_{k_i} + \log q_{k_{i} + 1 }  } . 
\end{equation*}
Then by \eqref{eq:36} for sufficiently small $|U|$
$$ \mu (U) \le 12 |U|^s.$$
Therefore, by Fact~\ref{MDP}, we have
\begin{equation*}
\Hdim \left( \mathcal U_{\tau} [\theta] \right) 
\ge \Hdim \left( \bigcap_{i=1}^\infty D_{i} \right)  \ge  s. \qedhere
\end{equation*} 
\end{proof}

\begin{proof}[Proof of Theorem~\ref{bounds_beta_1}]
When $\tau <1$ or $\tau >1$, the proof is the same as that of Theorem~\ref{main_theorem}. 
The case of $\tau=1$ follows from Theorem~\ref{beta1}.
\end{proof}

\section{Proofs of Theorems \ref{bounds} and \ref{thm_conti}}\label{sec_pf}

Using  Theorem~\ref{beta1}, we can prove Theorem~\ref{bounds}.

\begin{proof}[Proof of Theorem~\ref{bounds}]
Let us use the same notation $(q_{k_j})_{j\geq 1}$ for the subsequences selected in Theorem~\ref{main_theorem} for the two cases $1/w(\theta) < \tau < 1$ and $1 < \tau < w(\theta)$. 
Then by the fact that $n_j=q_{k_j}$ increases super-exponentially, we can replace $\|n_j\theta \|$ by $q_{k_j+1}^{-1}$ and rewrite the formula in Theorem~\ref{main_theorem} as follows.
$$
\Hdim \left( \mathcal U_{\tau} [\theta] \right) = 
\begin{dcases}
\displaystyle \varliminf_{i \to \infty} \frac{ \log \left( \prod_{j=1}^{i-1}(q_{k_j}^{1/\tau}  q_{k_j+1}^{-1}) \cdot q_{k_i}^{1+1/\tau} \right ) }{\log ( q_{k_i} q_{k_i+1} ) }, 
& \text{ if }\  \frac{1}{w(\theta)} < \tau < 1, \\
\displaystyle \varliminf_{i \to \infty} \frac{ -\log \left( \prod_{j=1}^{i-1}q_{k_j} q_{k_j+1}^{-1/\tau} \right) }{\log \left( q_{k_i} q_{k_i+1} \right) },
 & \text{ if }\  1< \tau < w(\theta).
\end{dcases} $$

Further, let $w_j$ be the real numbers defined by $2q_{k_j +1} = q_{k_j}^{w_j}$ for the case $1/w(\theta) < \tau < 1$ and $4q_{k_j +1} = q_{k_j}^{w_j}$ for the case $1 < \tau < w(\theta)$. 
Then by \eqref{qn-estimate}, $w_j \geq 1/\tau$ if $1/w(\theta) < \tau < 1$ and $w_j \geq \tau$ if $1 < \tau < w(\theta)$.  
By \eqref{def-w}, we have
\begin{equation}\label{limsup_wi}
 \varlimsup_{j \to \infty} w_j = w(\theta),
 \end{equation}
and the dimension $\Hdim \left( \mathcal U_{\tau} [\theta] \right)$ is equal to
\begin{equation}\label{bg_bl}
\begin{dcases}
\varliminf_{i \to \infty} \left( \frac{1+{1\over \tau}}{w_{i}+1} - \sum_{j=1}^{i-1}  \frac{w_{j}-{1\over \tau}}{w_{i}+1} \cdot\frac{\log q_{k_{j}}}{\log q_{k_{i}} } \right), 
& \text{ if }\ \frac 1{w(\theta)} < \tau < 1, \\
 \varliminf_{i \to \infty} \sum_{j=1}^{i-1}  \frac{{w_{j}\over \tau}-1}{w_{i}+1} \cdot\frac{\log q_{k_{j}}}{\log q_{k_{i}} }, & \text{ if }\ 1< \tau < w(\theta).
\end{dcases} 
\end{equation}

Now fix $w(\theta)=w\in (1,+\infty]$.
For all $j<i$, we have
\begin{equation*}
0 < \frac{\log q_{k_j}}{\log q_{k_i}} = \frac{\log q_{k_j}}{\log q_{k_{j+1}}} \cdots \frac{\log q_{k_{i-1}}}{\log q_{k_i}} \le \frac{\log q_{k_j}}{\log q_{k_{j}+1}} \cdots \frac{\log q_{k_{i-1}}}{\log q_{k_{i-1}+1}} = \frac{1}{w_{j} \cdots w_{i-1}}.
\end{equation*}
Hence, if $\frac1{w} < \tau < 1$,
\begin{equation*}
0 \leq  (w_j-{1\over \tau})\cdot\frac{\log q_{k_j}}{\log q_{k_i}} \leq \frac{1}{w_{j+1} \cdots w_{i-1}} - \frac{1}{\tau w_{j} \cdots w_{i-1}},
\end{equation*}
and if $1 < \tau < w$,
\begin{equation*}
0 \leq  \left({w_j \over \tau}-1 \right)\cdot\frac{\log q_{k_j}}{\log q_{k_i}} \leq \frac{1}{\tau w_{j+1} \cdots w_{i-1}} - \frac{1}{w_{j} \cdots w_{i-1}}.
\end{equation*}

Let $$S_{i-1} = \frac{1}{w_{1} \cdots w_{i-1}} + \frac{1}{w_{2} \cdots w_{i-1}} + \cdots + \frac{1}{w_{i-1}} .$$
Then 
for $1/w < \tau < 1$
\begin{equation*}
\varliminf_{i \to \infty} \frac{1}{w_{i}+1} \left( \frac 1\tau + \Big( \frac 1\tau - 1\Big) S_{i-1} + \frac{1}{w_{1} \cdots w_{i-1}} \right) \\
\le \Hdim \left( \mathcal U_{\tau} [\theta] \right) 
\le \varliminf_{i \to \infty} \frac{1+{1\over \tau}}{w_{i}+1} 
\end{equation*}
and for  $1< \tau < w$
\begin{equation*}
0 \le \Hdim \left( \mathcal U_{\tau} [\theta] \right) 
\le  \varliminf_{i \to \infty} \frac{1}{w_{i}+1} \left( \frac 1\tau - \Big( 1- \frac 1\tau \Big) S_{i-1} - \frac{1}{\tau w_{1} \cdots w_{i-1}} \right).
\end{equation*}

If $w = \infty$, then $\varlimsup w_i = \infty $ for both two cases $0=1/w(\theta)<\tau <1$ and $1< \tau < w(\theta)=\infty$. Thus by (\ref{bg_bl}), we have 
\begin{equation*}
\Hdim \left( \mathcal U_{\tau} [\theta] \right) 
\le \begin{dcases} \varliminf_{i \to \infty} \frac{1+{1\over \tau}}{w_{i}+1} = 0, & 0< \tau < 1, \\
 \varliminf_{i \to \infty} \frac{1}{w_{i}+1} \cdot \frac 1\tau = 0,  &1< \tau < \infty. \end{dcases} 
\end{equation*}
Therefore, $\Hdim \left( \mathcal U_{\tau} [\theta] \right)=0$ for all $\tau>0$.

If $w < \infty$, then by \eqref{limsup_wi},  for any $\varepsilon >0$ there is $N$ such that if $i \geq N$ then
$$ S_{i-1} = \frac{1}{w_{1} \cdots w_{i-1}} + \cdots + \frac{1}{w_{i-1}} > \frac{1}{(w+\varepsilon)^{i-N}} + \dots + \frac{1}{w+\epsilon} = \frac{1 - (w+\epsilon)^{-i+N}}{w +\epsilon- 1}.$$
Thus, for $1/w < \tau < 1$
\begin{equation*}
\frac{1}{w+1} \left( {1\over \tau} + \Big({1\over \tau} - 1\Big){1 \over w -1 } \right)
\le \Hdim \left( \mathcal U_{\tau} [\theta] \right) 
\le \frac{1+{1\over \tau}}{w+1}, 
\end{equation*}
and for  $1 < \tau < w$
\begin{equation*}
0 \le \Hdim \left( \mathcal U_{\tau} [\theta] \right) \le \frac{1}{w+1} \left( \frac 1 \tau -\Big(1-{1\over \tau}\Big) {1 \over w-1}\right).
\end{equation*}

For the case of $\tau = 1$, we complete the proof by Theorem~\ref{beta1}.
\end{proof}

Now we are ready to prove Theorem~\ref{thm_conti}.

\begin{proof}[Proof of Theorem~\ref{thm_conti}]

Let $1/w < \tau' < \tau < 1$ and $(k_i)$ and $(k'_i)$ be the maximal sequences of 
$$ q_{k_i} \| q_{k_i} \theta \|^\tau < 1, \qquad  q_{k'_i} \| q_{k'_i} \theta \|^{\tau'} < 1. $$
Note that $(k'_i)$ is a subsequence of $(k_i)$.

Let $w_j$, $w'_j$ be the real numbers defined by $2q_{k_j +1} = q_{k_j}^{w_j}$, $2q_{k'_j +1} = q_{k'_j}^{w'_j}$ as in the proof of Theorem~\ref{bounds}.   Recall that for all $j$, we have $w_j \tau \ge 1$ and $w_j' \tau' \ge 1$.
Thus, by noting the fact $q_{k_{j+1}} \ge q_{k_j+1} > {q_{k_j}}^{1/\tau}$, we have
\begin{align}
& \frac{1+{1/ \tau}}{w_{i}+1} - \sum_{j=1}^{i-1} \frac{w_{j}-1/\tau}{w_{i}+1} \cdot\frac{\log q_{k_{j}}}{\log q_{k_{i}} }\nonumber \\
= &\frac{1+1/\tau'}{w_{i}+1}-\frac{1/\tau'-1/\tau}{w_{i}+1} - \sum_{j=1}^{i-1}  \frac{w_{j}-1/\tau'}{w_{i}+1} \cdot \frac{\log q_{k_j} } {\log q_{k_i} } -  \sum_{j=1}^{i-1}  \frac{1/\tau'-1/\tau}{w_{i}+1} \cdot \frac{\log q_{k_j} } {\log q_{k_i} } \nonumber  \\
\ge & \frac{1+1/\tau'}{w_{i}+1} - \sum_{j=1}^{i-1}  \frac{w_{j}-1/\tau'}{w_{i}+1} \cdot \frac{\log q_{k_j} }{\log q_{k_i} } -  \sum_{j=1}^{i} \frac{1/\tau' - 1/\tau}{1/\tau^{i-j}(1+{1 / \tau})} \nonumber \\
\ge & \frac{1+1/\tau'}{w_{i}+1} - \sum_{j=1}^{i-1}  \frac{w_{j}-1/\tau'}{w_{i}+1} \cdot \frac{\log q_{k_j} }{\log q_{k_i} } - \frac{\tau - \tau'}{\tau'(1-\tau^2)}.\label{cont-eq1}
\end{align}

Let $s$ be the index such that $k'_{s} < k_i < k'_{s+1}$.
Noting that $w_j-1/\tau' \leq 0$ if $k_j$ is not in the subsequence $(k_i')$, we have
\begin{align}\label{cont-estimate}
\frac{1+1/\tau'}{w_{i}+1} - \sum_{j=1}^{i-1}  \frac{w_{j}-1/\tau'}{w_{i}+1} \cdot \frac{\log q_{k_j} }{\log q_{k_i} }
\ge \frac{1+1/\tau'}{w_{i}+1} - \sum_{j=1}^{s} \frac{w'_{j}-1/\tau'}{w_{i}+1} \cdot \frac{\log q_{k'_j} }{\log q_{k_i} }.
\end{align}
By the choice of $s$, we know $q_{k_i} \geq q_{k_s'+1}=q_{k_s'}^{w_s'}$. Hence, the right hand side of (\ref{cont-estimate}) is bigger than
\begin{align*}
\frac{1+1/\tau'}{w_{i}+1}- \frac{w'_{s}-1/\tau'}{(w_i +1)w'_s} - \sum_{j=1}^{s-1} \frac{w'_{j}-1/\tau'}{w_i +1} \cdot \frac{\log q_{k'_j} }{w'_s \log q_{k'_{s}} },
\end{align*}
which is equal to
\begin{align*}
\frac{1+1/w'_s}{\tau'(w_{i}+1)} - \sum_{j=1}^{s-1} \frac{w'_{j}-1/\tau'}{(w_i +1)w'_s} \cdot \frac{\log q_{k'_j} }{\log q_{k'_{s}} }.
\end{align*}
Reminding the fact $1/\tau \leq  w_i \leq 1/\tau'$, we then deduce that
\begin{equation*}
\frac{1+1/\tau'}{w_{i}+1} - \sum_{j=1}^{i-1}  \frac{w_{j}-1/\tau'}{w_{i}+1} \cdot \frac{\log q_{k_j} }{\log q_{k_i} }
\ge \frac{1+1/w'_s}{\tau'+1} - \sum_{j=1}^{s-1} \frac{w'_{j}-1/\tau'}{(1+1/\tau )w'_s} \cdot \frac{\log q_{k'_j} }{\log q_{k'_{s}} }.
\end{equation*}
By verifying $(1+1/\tau)w'_s> w_s'+1$ and 
\begin{align*}
 \frac{1+1/w'_s}{\tau'+1} \ge \frac{1+1/\tau'}{w'_s+1},
\end{align*}
we obtain
\begin{align}\label{cont-eq2}
\frac{1+1/\tau'}{w_{i}+1} - \sum_{j=1}^{i-1}  \frac{w_{j}-1/\tau'}{w_{i}+1} \cdot \frac{\log q_{k_j} }{\log q_{k_i} }
\ge \frac{1+1/\tau'}{w'_s+1} - \sum_{j=1}^{s-1} \frac{w'_{j}-1/\tau'}{w'_s+1} \cdot \frac{\log q_{k'_j} }{\log q_{k'_{s}} }.
\end{align}

Therefore, combining (\ref{cont-eq1}) and (\ref{cont-eq2}), we have for $k'_{s} \le k_i < k'_{s+1}$,
\begin{equation*}
\frac{1+{1/ \tau}}{w_{i}+1} - \sum_{j=1}^{i-1} \frac{w_{j}-1/\tau}{w_{i}+1} \cdot\frac{\log q_{k_{j}}}{\log q_{k_{i}} } 
\ge \frac{1+1/\tau'}{w'_{s}+1} - \sum_{j=1}^{s-1}  \frac{w'_{j}-1/\tau'}{w'_{s}+1} \cdot \frac{\log q_{k'_j} }{\log q_{k'_s} } - \frac{\tau - \tau'}{\tau'(1-\tau^2)}.
\end{equation*}
Hence, by \eqref{bg_bl}, we have
\begin{align*}
\Hdim \left( \mathcal U_{\tau} [\theta] \right) 
&= \varliminf_{i \to \infty} \left( \frac{1+{1 / \tau}}{w_{i}+1} - \sum_{j=1}^{i-1}  \frac{w_{j}-1/\tau}{w_{i}+1} \cdot\frac{\log q_{k_{j}}}{\log q_{k_{i}} } \right) \\
&\ge \varliminf_{s \to \infty} \left( \frac{1/\tau'+1}{w'_{s}+1} - \sum_{j=1}^{s-1}  \frac{w'_{j}-1/\tau'}{w'_{s}+1} \cdot \frac{\log q_{k'_j} }{\log q_{k'_s} } \right) - \frac{\tau - \tau'}{\tau'(1-\tau^2)}\\
&= \Hdim \left( \mathcal U_{\tau'} [\theta] \right) - \frac{\tau - \tau'}{\tau'(1-\tau^2)}.
\end{align*}

Let $1< \tau < \tau' < w$. Let $(k_i)$ and $(k'_i)$ be the sequence of 
$$ q_{k_i}^\tau \| q_{k_i} \theta \|< 2, \qquad  q_{k'_i}^{\tau'} \| q_{k'_i} \theta \| < 2. $$
Clearly, $(k'_i)$ is a subsequence of $(k_i)$.

Let $w_i$ be the real numbers defined by $4q_{k_i +1} = q_{k_i}^{w_i}$ as in the proof of Theorem~\ref{bounds}. 
Recall that for all $j$, we have $w_j \ge \tau$. 
Then, by \eqref{bg_bl}, we have
\begin{align*}
\Hdim \left( \mathcal U_{\tau} [\theta] \right) 
&= \varliminf_{i \to \infty} \sum_{j=1}^{i-1}  \frac{w_{j}/\tau-1}{w_{i}+1} \cdot\frac{\log q_{k_{j}}}{\log q_{k_{i}} } \\
&= \varliminf_{i \to \infty} \left( \sum_{j=1}^{i-1}  \frac{w_{j}/\tau' -1}{w_{i}+1} \cdot\frac{\log q_{k_{j}}}{\log q_{k_{i}} }  + \sum_{j=1}^{i-1}{w_j(\tau' - \tau) \over \tau \tau' (w_i+1)}\cdot {\log q_{k_j} \over \log q_{k_i}}\right) \\
&\le \varliminf_{i \to \infty} \left( \sum_{j=1}^{i-1}  \frac{w_{j}/\tau' -1}{w_{i}+1} \cdot\frac{\log q_{k_{j}}}{\log q_{k_{i}} } \right) +  \varlimsup_{i \to \infty} \left( \sum_{j=1}^{i-1} \frac{w_j(\tau'-\tau)}{\tau\tau'(\tau+1)\tau^{i-j}} \right) \\
&\le \varliminf_{i \to \infty} \left( \sum_{j=1}^{i-1}  \frac{w_{j}/\tau' -1}{w_{i}+1} \cdot\frac{\log q_{k'_{j}}}{\log q_{k'_{i}} } \right) +   \frac{w(\tau' -\tau) }{\tau\tau'(\tau^2 -1)}.
\end{align*}
Hence,
\[
\Hdim \left( \mathcal U_{\tau} [\theta] \right) - \Hdim \left( \mathcal U_{\tau'} [\theta] \right) \le \frac{(\tau' -\tau) w }{\tau\tau'(\tau^2 -1)}. 
\]
Since $\mathcal U_{\tau} [\theta]  \supset \mathcal U_{\tau'} [\theta] $,
\[
\Hdim \left( \mathcal U_{\tau} [\theta] \right) - \Hdim \left( \mathcal U_{\tau'} [\theta] \right) \geq 0.
\]
Therefore, the claim holds.
\end{proof}

\section{Examples}\label{sec_exm}

The following examples show that the upper and lower bounds in Theorems~\ref{bounds_beta_1} and \ref{bounds}
can not be replaced by smaller or larger numbers.

\begin{example}\label{exam1}
Let $\theta$ be of irrational exponent $w(\theta)=w > 1$ with $q_{k+1} > q_k^w$ for all $k$.
Then the subsequence $k_i$ in the proof of Theorem~\ref{main_theorem} is given by $k_i = i$.

Put $q_{k_i +1} = q_{k_i}^{w_i}$. Then $\lim\limits_{i \to \infty} w_i = w$.

For $1/w < \tau < 1$, we have 
\begin{equation*}
\begin{split}
\Hdim (\mathcal U_{\tau} [\theta]  ) 
&= \varliminf_{i \to \infty} \frac{\log ( q_1^{1/\tau} \| q_1 \theta \| q_2^{1/\tau} \| q_2 \theta \| \cdots q_{i-1}^{1/\tau} \| q_{i-1} \theta \| \cdot q_i^{1+{1 \over \tau}}) }{\log ( q_i / \| q_{i} \theta \| ) }\\
&= \varliminf_{i \to \infty} \frac{\log ( q_1^{1/\tau} q_2^{1/\tau-1} \cdots q_{i-1}^{1/\tau-1} \cdot q_i^{1/\tau}) }{\log ( q_i q_{i+1}) }\\
&= \varliminf_{i \to \infty} \frac{1}{1 + w_{i}} \left( \frac{{1\over \tau}}{w_1 \cdots w_{i-1}} + \frac{{1\over \tau} -1}{w_2 \cdots w_{i-1}} + \cdots + \frac{{1\over \tau} -1}{w_{i-1}} + \frac 1\tau \right) \\
&= \frac{1}{1 + w} \left( \frac{{1\over \tau} -1}{w-1} + \frac 1\tau \right).
\end{split}
\end{equation*}
For $1 < \tau < w$, we have 
\begin{equation*}
\begin{split}
\Hdim (\mathcal U_{\tau} [\theta]  ) &= \varliminf_{i \to \infty} \frac{ -\log ( q_1\| q_1 \theta \|^{1/\tau} q_2\| q_2 \theta \|^{1/\tau} \cdots q_{i-1}\| q_{i-1} \theta \|^{1/\tau} ) }{\log ( q_i / \| q_{i} \theta \| ) }\\
&= \varliminf_{i \to \infty} \frac{-\log \left( q_1 q_2^{1-1/\tau} \cdots q_{i-1}^{1-1/\tau} \cdot q_i^{-1/\tau} \right) }{\log ( q_i q_{i+1}) }\\
&= \varliminf_{i \to \infty} \frac{1}{1 + w_{i}} \left( \frac{{1\over \tau}}{w_1 \cdots w_{i-1}} + \frac{{1\over \tau} -1}{w_2 \cdots w_{i-1}} + \cdots + \frac{{1\over \tau} -1}{w_{i-1}} + \frac 1\tau \right) \\
&= \frac{1}{1 + w} \left( \frac{{1\over \tau} -1}{w-1} + \frac 1\tau \right).
\end{split}
\end{equation*}
Therefore, 
for each $1/w < \tau < w$ we have
$$ \Hdim \left( \mathcal U_{\tau} [\theta] \right)  = \frac{{w \over \tau} -1}{w^2-1} . $$
\end{example}

\begin{example}\label{exam2}
Assume that $\theta$ is an irrational of $w(\theta)=w >1$ with the subsequence $\{ k_i \}$ of $q_{k_i + 1} > q_{k_i}^w$ satisfying that
$a_{n+1} = 1 \text{ for } n \ne k_i $ and $q_{k_{i}} > \left(q_{k_{i-1} +1}\right)^{2^i}$.
Then we have
\begin{equation*}
\lim_{i \to \infty} \left( \frac{\log q_{k_1}}{\log q_{k_{i}} } + \frac{\log q_{k_{2}}}{\log q_{k_{i}} } 
+  \cdots + \frac{\log q_{k_{i-1}}}{\log q_{k_{i}} } \right) = 0.
\end{equation*}
Since $w_i$ converges to $w$, by \eqref{bg_bl}, the Hausdorff dimension of $\mathcal U_{\tau} [\theta]$ is $\frac{1/\tau + 1}{w +1}$
and 0, respectively for $1/w < \tau < 1$ and $\tau > 1$.

If $\tau = 1$, then, by the proof of \eqref{beta1lb},
\begin{equation*}
\begin{split}
\Hdim (\mathcal U_{\tau} [\theta] ) 
&\ge \varliminf_{i \to \infty} \dfrac{ \displaystyle \log q_{k_i} + \sum_{\substack{1 \le k < k_i \\ a_{k+1} = 1}} \log \frac{q_{k+1}}{q_k} }
 {\displaystyle \log q_{k_i} + \log q_{k_{i+1}} - \sum_{\substack{k_i \le k < k_{i+1} \\ a_{k+1} = 1}} \log \frac{q_{k+1}}{q_k}  }\\
&\ge \varliminf_{i \to \infty} \dfrac{ \displaystyle \log q_{k_i} + \left( \log q_{k_i} - \log q_{k_{i-1}+1} \right) }
 {\displaystyle \log q_{k_i} + \log q_{k_i+1} }
\ge \varliminf_{i \to \infty} \frac{ 2 - \log q_{k_{i-1}+1}/\log q_{k_i} }{ 1 + \log q_{k_{i}+1}/\log q_{k_i}  } \\
& \ge \varliminf_{i \to \infty} \frac{ 2 - 2^{-i} }{ 1 + \log q_{k_{i}+1}/\log q_{k_i}  } = \frac{2}{w+1}.
\end{split}
\end{equation*}

Hence, we have
$$ \Hdim \left( \mathcal U_{\tau} [\theta] \right)  = \begin{cases}
\dfrac{ 1+{1\over \tau}}{w +1}, &\text{ for } 1/w <  \tau \le 1, \\
0, &\text{ for } \tau > 1. \end{cases}$$
\end{example}

\begin{example}\label{examb1}
Let $\theta = \frac{\sqrt 5 -1}{2}$, of which partial quotients $a_k = 1$ for all $k$.  Note that $w(\theta) = 1$.
By Lemma~\ref{fkt}, $\mathcal U_{\tau} [\theta] = \mathbb T$ for $\tau = 1$. Thus, we have
$$\Hdim \left( \mathcal U_{\tau} [\theta] \right) = 
\begin{cases}1, & \tau \le 1, \\
0 & \tau > 1. \end{cases}$$
\end{example}

\begin{example}\label{examb2}
Let $\theta$ be the irrational with partial quotient $a_k = k$ for all $k$.
Then $w(\theta) = 1$.
Consider the case of $\tau = 1$. By Lemma~\ref{lem1} (iii), we have
\begin{equation*}
F_k \subset \bigcup_{i = 1}^{q_k} \left( i\theta  - 2 \Big( \frac{\| q_k \theta \| }{q_k} \Big)^{ \frac{1}{2} }, \  i\theta + 2 \Big( \frac{\| q_k \theta \| }{q_k} \Big)^{ \frac{1}{2} } \right). 
\end{equation*} 
Thus,  by \eqref{elldelta}, $F_k$ can be covered by $\ell_k$ sets of diameter at most $\delta_k$, with
\begin{align*}
\ell_k &\le  q_1 \left( 4 q_2 \Big( \frac{\| q_1 \theta \|}{q_1} \Big)^{\frac{1}{2}} + 4 \right)   \cdots  \left( 4 q_{k} \Big( \frac{ \| q_{k-1} \theta \|}{q_{k-1}} \Big)^{\frac{1}{2}} + 4 \right)  \\ 
&\le   8^{k-1} q_1 \left( \frac{q_2}{q_1} \right)^{\frac{1}{2}} \cdots  \left( \frac{ q_{k}}{q_{k-1}} \right)^{\frac{1}{2}}
= 8^{k-1}  \left( q_1 q_{k} \right)^{\frac{1}{2}}, \\
\delta_k &\le 4  \Big( \frac{\| q_k \theta \| }{q_k} \Big)^{ \frac{1}{2} } <4 \Big( \frac{ 1}{q_k q_{k+1}} \Big)^{ \frac{1}{2} }  .
\end{align*}
Here we use the fact $x+ 1 \le 2 x$ for $x \ge 1$ for the second inequality for $\ell_k$.
Thus,  
$$
\Hdim \left( \mathcal U_{\tau} [\theta] \right) \le \varliminf_k \frac{\log \ell_k}{-\log \delta_k} 
= \varliminf_k \frac{ (k-1) \log 8 + \frac 12 (\log q_{k} + \log q_1) }{ - \log 4 + \frac 12 (\log q_k + \log q_{k+1}) }.$$
Since $$\log q_{k+1} \ge \sum_{i=1}^{k+1} \log a_i = \sum_{i=2}^{k+1} \log i \ge \int_{1}^{k+1} (\log x) \mathrm{d}x= (k+1) \log (k+1) - k,$$
one has
$$ \lim_{k \to \infty} \frac{k}{\log q_k} = 0, \qquad 1\le \lim_{k \to \infty} \frac{\log q_{k+1}}{\log q_k} \le \lim_{k \to \infty} \frac{\log (a_{k+1}+1) + \log q_k}{\log q_k} = 1.$$
Therefore, 
$$
\Hdim \left( \mathcal U_{\tau} [\theta] \right) \le \varliminf_k \frac{ \log q_{k} }{ \log q_k + \log q_{k+1} } = \frac 12.
$$
Hence, by Theorem~\ref{beta1}, we have 
$$\Hdim \left( \mathcal U_{\tau} [\theta] \right) = 
\begin{cases}1, & \tau < 1, \\
\frac 12, & \tau = 1, \\
0 & \tau > 1. \end{cases}$$
\end{example}

\section*{Acknowledgement}
The authors thank Yann Bugeaud and Michel Laurent for their valuable remarks. 
Dong Han Kim was partially supported by NRF-2012R1A1A2004473 and NRF-2015R1A2A2A01007090 (Korea). 


\begin{thebibliography}{99}

\bibitem{AB}
P. Alessandri and V. Berth\'e,
Three distance theorems and combinatorics on words,
\emph{Enseign. Math.} \textbf{44} (1998), no. 1-2, 103--132.

%

\bibitem{Be} A. S. Besicovitch, Sets of fractional dimensions (IV): on rational approximation to real numbers,
\emph{J. London Math. Soc.} \textbf{9} (1934), 126--131.

\bibitem{BHHV12}
V. Beresnevich,  G. Harman, A. Haynes and S. Velani, 
The Duffin-Schaeffer conjecture with extra divergence II,
\emph{ Math. Z.} {\bf 275} (2013), no. 1-2, 127--133.

\bibitem{Bu}
Y. Bugeaud, A note on inhomogeneous diophantine approximation,
\emph{Glasgow Math. J.} \textbf{45} (2003), 105--110.

\bibitem{BuBook}
Y. Bugeaud, Distribution {Modulo} {One} and {Diophantine} {Approximation},
Cambridge University Press, 2012.

\bibitem{BuLa}
Y. Bugeaud and M. Laurent,
On exponents of homogeneous and inhomogeneous Diophantine approximation,
\emph{Moscow Math. J.} \textbf{5} (2005), 747--766.

\bibitem{BuLi}
Y. Bugeaud and L. Liao,
Uniform Diophantine approximation related to $b$-ary and $\beta$-expansions, 
\emph{Ergod. Th.  Dynam. Sys.}, \textbf{36}, no. 1, (2016), 1--22.

\bibitem{Cassels57}
J.~W.~S. Cassels.
\newblock {\em An introduction to {D}iophantine approximation},
\newblock Cambridge Tracts in Mathematics and Mathematical Physics, No. {\bf 45}.
  Cambridge University Press, New York, 1957.

\bibitem{Che}
Y. Cheung, Hausdorff dimension of the set of singular pairs,
\emph{Ann. Math.} \textbf{173} (2011), 127--167.

\bibitem{ChCh}
Y. Cheung and N. Chevallier, Hausdorff dimension of singular vectors, \emph{Duke Math. J.}, {\bf 65} (2016), 2273-2329.

\bibitem{CF}
Z. Coelho and E. de Faria,
Limit laws of entrance times for homeomorphisms of the circle,
\emph{Israel J. Math.} \textbf{93} (1996), 93--112.

\bibitem{Di} L. G. P. Dirichlet, Verallgemeinerung eines Satzes aus der Lehre von den 
Kettenbr\"{u}chen nebst einige Anwendungen auf die Theorie der Zahlen, 
\emph{S.-B. Preuss. Akad. Wiss.} (1842), 93--95.

\bibitem{DS41}
R.~J. Duffin and A.~C. Schaeffer.
\newblock Khintchine's problem in metric {D}iophantine approximation,
\newblock {\em Duke Math. J.} {\bf 8} (1941), 243--255.

\bibitem{FALC} K. J. Falconer,  {\em Fractal geometry. Mathematical foundations and applications}, Second edition, John Wiley \& Sons, Inc., Hoboken, NJ, 2003.

\bibitem{FALC2} K. J. Falconer, {\em Techniques in Fractal Geometry}, John Wiley \& Sons, Inc., Hoboken, NJ, 1997.

\bibitem{FST13}
A.-H. Fan, J. Schmeling, and S. Troubetzkoy,
\newblock A multifractal mass transference principle for {G}ibbs measures with
  applications to dynamical {D}iophantine approximation,
\newblock {\em Proc. Lond. Math. Soc.} {\bf 107}  (2013), 1173--1219.


\bibitem{FK} M.~Fuchs and D.~H. Kim, \newblock On Kurzweil's 0-1 law in inhomogeneous Diophantine approximation, \emph{Acta. Arith.} \textbf{173} (2016), 41--57.  

\bibitem{HPV12}
A. Haynes, A. Pollington and S. Velani, 
The Duffin-Schaeffer conjecture with extra divergence,
\emph{Math. Ann.} {\bf 353} (2012), no. 2, 259--273. 

\bibitem{Herman}
M. R. Herman, Sur la conjugaison diff\'{e}rentiable des diff\'{e}omorphismes du cercle \`a des rotations, {\it Inst. Hautes \'Etudes Sci. Publ. Math.}, {\bf 49} (1979), 5--233.

\bibitem{HV} R. Hill and S. Velani, Ergodic theory of shrinking targets, \emph{Invent. math.} {\bf 119} (1995), 175--198.

\bibitem{HV2} R. Hill and S. Velani, The shrinking target problem for matrix transformations of tori, \emph{J. London Math. Soc. (2)}  {\bf 60}  no. 2  (1999), 381--398.

\bibitem{Ja} V. Jarn\'{i}k, Zur metrischen Theorie der diophantischen Approximation, \emph{Prace Mat.-Fiz.} \textbf{36} (1928/1929), 91--106.


\bibitem{Khinchin} A. Ya. Khinchine, \emph{Continued Fractions}, Univ. Chicago Press, Chicago, 1964.

\bibitem{Kh24} A. Ya. Khinchine, Einige S\"atze \"uber Kettenbr\"uche, mit Anwendungen auf die Theorie der Diophantischen Approximationen, \emph{Math. Ann.} {\bf 92} (1924), no. 1-2, 115--125.

\bibitem{Kh26} A. Ya. Khinchine,
\"Uber eine Klasse linearer diophantischer Approximationen, 
\emph{Rendiconti Circ. Mat. Palermo} {\bf 50} (1926), 170--195.

\bibitem{Kim} D.H. Kim, The recurrence time of irrational rotations, \emph{Osaka J. Math.}  \textbf{43} (2006) 351--364.


\bibitem{Kim14} D.H. Kim, \newblock  Refined shrinking target property of rotations,
\newblock {\em Nonlinearity} {\bf 27} (2014), 1985--1997.

\bibitem{KimSeo} D.H. Kim and B.K. Seo, The waiting time for irrational rotations, \emph{Nonlinearity} \textbf{16} (2003), 1861--1868.

\bibitem{KW} D. Kleinbock and N. Wadleigh, A zero-one Law for improvements to Dirichlet's Theorem, Proc. Amer. Math. Soc., to appear.

\bibitem{Kurzweil55}
J.~Kurzweil,
\newblock On the metric theory of inhomogeneous Diophantine approximations,
\newblock {\em Studia Math.} {\bf 15} (1955), 84--112.

\bibitem{LN12}
M. Laurent and A. Nogueira,
\newblock Inhomogeneous approximation with coprime integers and lattice orbits,
\newblock {\em Acta Arith.} {\bf 154} (2012), 413--427.

\bibitem{Leg} A. M. Legendre, {\em Essai sur la th\'eorie des nombres}, Chez Courcier, Imprimeur-Libaraire pour les Math\'ematiques, Paris, 1808.

\bibitem{Le} J. Levesley, A general inhomogeneous Jarn\'ik-Besicovitch theorem, \emph{J. Number Theory} \textbf{71} (1998), 65--80.

\bibitem{LS13}
L. Liao and S. Seuret,
\newblock Diophantine approximation by orbits of expanding {M}arkov maps,
\newblock {\em Ergodic Theory Dynam. Systems.} {\bf 33} (2013), 585--608.


\bibitem{Min} H. Minkowski, {\it Diophantische Approximationen}, Leipzig, Berlin 1907.


\bibitem{RS}
A. Rockett and P. Sz\"usz,
\emph{Continued Fractions}. World Scientific, 1992.


\bibitem{SchTro} J.~Schmeling and S.~Troubetzkoy,
Inhomogeneous Diophantine approximation and angular recurrence
properties of the billiard flow in certain polygons, \emph{Math. Sbornik} \textbf{194} (2003), 295--309.

\bibitem{Schmidt}
W. M. Schmidt, Metrical theorems on fractional parts of sequences, \emph{Trans. Amer. Math. Soc.} \textbf{110} (1964), 493--518.

\bibitem{Slater} N. B. Slater,
Gaps and steps for the sequence $n\theta \pmod 1$,
\emph{Proc. Camb. Phil. Soc.} \textbf{63} (1967), 1115--1123.

\bibitem{Sp}
V.~Sprind\v{z}uk, {\it Metric {Theory} of {Diophantine} {Approximations}},
 V. H. Winston \& Sons, Washington, D.C., 1979.

\bibitem{Szusz}
P. Sz\"{u}sz, \"{U}ber die metrische Theorie der diophantischen Approximation, \emph{Acta Math. Acad. Sci. Hung.} \textbf{9} (1958), 177--193.



\bibitem{Wald} M. Waldschmidt, Recent advances in Diophantine approximation, \emph{Number theory, analysis and geometry}. 659--704, Springer, New York, 2012.

\end{thebibliography}
\end{document}